\newtheorem{theorem}{Theorem}[section]
\newtheorem{lemma}[theorem]{Lemma}
\newtheorem{corollary}[theorem]{Corollary}
\newtheorem{proposition}[theorem]{Proposition}
\theoremstyle{definition}
\newtheorem{notation}[theorem]{Notation}
\theoremstyle{remark}
\newtheorem{remark}[theorem]{Remark}
\numberwithin{equation}{section}
\renewcommand{\L}{\mathbb L}
\renewcommand{\P}{\mathbb P}
\renewcommand{\S}{\mathbb S}
\newcommand{\D}{\mathbb D}
\newcommand{\E}{\mathbb E}
\newcommand{\M}{\mathbb M}
\newcommand{\f}    [1]{\mathbb{F}_{#1}}
\newcommand{\Gcd}  [2]{{\mathsf{gcd}({#1},{#2})}}
\newcommand{\squares}[1]{{(#1^\times)^2}}
\newcommand{\DE}[2]{{\mathsf{D}_{#1}^{{#2}}}}
\newcommand{\ProjectiveLine}[0]{{\mathcal{P}^1}}
\newcommand{\Family}[1]{\mathcal{S}}
\newcommand{\qbar}{{\overline{q}}}
\renewcommand{\aa}{\alpha}
\newcommand{\bb}{\beta}
\newcommand{\cc}{\gamma}
\newcommand{\dd}{\delta}
\newcommand{\N}{\mathbb N}
\newcommand{\F}{\mathbb F}
\newcommand{\ML}{{\mathcal L}}
\newcommand{\MN}{{\mathcal N}}
\newcommand{\faruk}{\textcolor[rgb]{0.1,0.1,0.9}}
\DeclareMathOperator{\Gal}{Gal}
\DeclareMathOperator{\End}{End}
\DeclareMathOperator{\GL}{GL}
\DeclareMathOperator{\Aut}{Aut}
\DeclareMathOperator{\diag}{diag}
\newcommand{\parskipsize}{.4em}
\begin{document}

\title{An exponential bound on the number of non-isotopic commutative semifields}
\author{Faruk G\"{o}lo\u{g}lu}
\address{Charles University in Prague}
\curraddr{}
\email{Faruk.Gologlu@mff.cuni.cz}
\thanks{}

\author{Lukas K\"olsch}
\address{University of Rostock, Germany; University of South Florida }
\curraddr{}
\email{lukas.koelsch.math@gmail.com}
\thanks{}

\subjclass[2020]{Primary 12K10, 17A35; Secondary 51A35, 51A40}

\date{}

\dedicatory{}

\begin{abstract}
We show that the number of non-isotopic commutative semifields of odd order
$p^{n}$ is exponential in $n$ when $n = 4t$ and $t$ is not a power of $2$. 
We introduce a new family of commutative semifields and a method for proving
isotopy results on commutative semifields that we use to deduce the 
aforementioned bound. The previous best bound on the number of non-isotopic
commutative semifields of odd order was quadratic in $n$ and was given by 
Zhou and Pott [Adv. Math. \textbf{234} (2013)]. Similar bounds in the case 
of even order were given in Kantor [J. Algebra \textbf{270} (2003)] and  
Kantor and Williams [Trans. Amer. Math. Soc. \textbf{356} (2004)].

\vspace{0.2cm}

\faruk{\textbf{Updated version: Typos in 
\hyperref[table_biproj]{Table 1} and 
\hyperref[table_comm]{Table 2} corrected.
\hyperref[secaut]{Author Contributions} (CRedit) 
added.}}

\end{abstract}

\maketitle


\section{Introduction}

In this paper, we show that the number $N_{p^{n}}$ of non-isotopic 
commutative semifields of odd order $p^{n}$ is exponential in 
$n$ when $n = 4t$. To be precise, we prove for every odd prime $p$,
\[
	N_{p^n} \ge \frac{(\sigma(n)-1)(p^{n/4}-1)}{2n},
\]
when $\nu_2(n) \ge 2$, where we denote by $\sigma(n)$ the odd part of 
an integer $n$ (i.e., $\sigma(n) = n/2^{\nu_2(n)}$), and by $\nu_2(n)$ 
the $2$-adic valuation of $n$ (i.e., $2^{\nu_2(n)} | n$ and $2^{\nu_2(n)+1} \nmid n$).
For odd $p$, the previous best bound on $N_{p^n}$ was quadratic in $n$ and was 
proved in \cite[Corollary 1]{ZP13}:
\[
	N_{p^n} \ge \frac{n(\sigma(n)-1)}{8} + cn,
\]
when $\nu_2(n) \ge 1$ and $c$ a constant. 
When $p$ and $n$ are odd, the known number for $N_{p^n}$ is linear in $n$.
The problem of determining
whether the number $N_{p^n}$ can be bounded by a polynomial in $n$ has been 
described  \cite[p. 180]{Pott16} as 
``the main problem in connection with commutative semifields of [odd] order $p^n$.''  
Note that it is impossible to find families with exponentially many non-isotopic
commutative semifields of order $p^n$ for arbitrary $p,n$. Indeed, by a result 
of Menichetti~\cite[Corollary 33]{Menichetti96}, all commutative semifields of order $p^n$ with 
$n$ prime and $p$ large enough are isotopic to the finite field or a twisted 
field (see Section~\ref{sec:albert}). It is thus impossible to give an exponential 
count for all $p,n$.
The problem in the characteristic $2$ case was solved almost
two decades ago. Kantor \cite[Theorem 1.1]{Kantor03} showed that the number of 
non-isotopic commutative semifields of order $2^{km}$ is at least
\[
	N_{2^{km}} \ge \frac{2^{km(\rho(m)-1)}}{k^2m^4},
\]
when $m > 1$ is odd and $m$ is not a power of $3$
(where we denote by $\rho(m)$ the number of prime factors of $m$ counting multiplicities), 
using a construction by
Kantor and Williams \cite[Theorem 1.7]{KW}. In these papers, finding a 
large number of (commutative) semifields in odd
characteristic and finding a general approach to proving non-isotopy were posed 
as important open problems \cite[p. 936]{KW},\cite[p. 112]{Kantor03}.

To prove the bound we introduce a new family of commutative semifields. These semifields satisfy a property that 
we call {\em biprojectivity}, which also applies to many known semifields of square 
order. The biprojective structure allows us to develop a technique of determining
isotopy between semifields. This technique is key to proving the exponential bound on 
non-isotopic commutative semifields.

In Section \ref{sec_prem}, we give the preliminaries. In Section \ref{sec_biproj}, we 
define biprojective semifields and give a quick survey on known commutative semifields
and their counts. Section \ref{sec_families} is devoted to proving the semifield property
of our family (Theorem~\ref{thm_family1}). 
Section \ref{sec_method} introduces our technique of proving isotopy between semifields
(Theorem~\ref{thm:equivalence}).
In Section \ref{sec_counts1}, we give the number of non-isotopic
semifields arising from our family (Theorem~\ref{thm:f1_equiv}).
Section \ref{sec_counts1} contains our main result that the number of non-isotopic 
commutative semifields of odd order $p^{n}$ is exponential in 
$n$ (Corollaries~\ref{cor:f1_equiv2} and \ref{cor:exponential}). 
In Section \ref{sec_nuclei}, we compute the nuclei associated to our semifields
(Theorem~\ref{thm_nucleus}). Finally, in Section \ref{sec_equiv},
we show that our semifields are indeed new and not isotopic to most known semifields 
(Theorem \ref{thm:inequiv_distinct}).


\section{Preliminaries} \label{sec_prem}

A \textbf{finite semifield} $\S = (S,+,\circ)$ is a set $S$ equipped with two operations $(+,\circ)$
satisfying the following axioms. 
\begin{enumerate}
\item[(S1)] $(S,+)$ is a group.
\item[(S2)] For all $x,y,z \in S$,
\begin{itemize}
\item $x\circ (y+z) = x \circ y + x \circ z$,
\item $(x+y)\circ z = x \circ z + y \circ z$.
\end{itemize}
\item[(S3)] For all $x,y \in S$, $x \circ y = 0$ implies $x=0$ or $y=0$.
\item[(S4)] There exists $\epsilon \in S$ such that $x\circ \epsilon = x = \epsilon \circ x$.
\end{enumerate}

In this paper, we will be interested only in finite semifields. 
Henceforth, when we say a semifield we will mean a finite semifield.
An algebraic object satisfying the first three of the above axioms is called 
a \textbf{pre-semifield}. If $\P = (P,+,\circ)$ is a pre-semifield, then $(P,+)$ is 
an elementary abelian $p$-group \cite[p. 185]{Knuth65}, and $(P,+)$ can be viewed as
an $n$-dimensional $\f{p}$-vector space $\F_p^n$. 
If $\circ$ is associative then $\S$ is the finite field $\f{p^n}$ by 
Wedderburn's theorem which states that a finite division ring is a field. 
By a result of Menichetti (known as Kaplansky's conjecture \cite{Menichetti})
when $n > 2$, there exist {\em proper} semifields of odd order $p^n$ where 
$\circ$ is non-associative. 
There are no proper semifields of order $2^3$. For $n > 3$, there 
exists proper semifields of order $2^n$ \cite{Knuth65}. 
A pre-semifield $\P = (\F_p^n,+,\circ)$ 
can be converted to a semifield $\S = (\F_p^n,+,\ast)$ using {\em Kaplansky's trick} 
by defining the new multiplication as
\[
	(x \circ e) \ast (e \circ y) = (x \circ y),
\]
for any nonzero element $e \in \F_p^n$, making $(e \circ e)$ the multiplicative 
identity of $\S$. A pre-semifield is an $\f{p}$-algebra, thus the multiplication
is bilinear. Therefore we have $\f{p}$-bilinear 
$B : \F_p^n \times \F_p^n \to \F_p^n$, satisfying
\[
	B(x,y) = x \circ y,
\]
and $\f{p}$-linear left and right multiplications $L_x,R_y : \F_p^n \to \F_p^n$, with
\[
L_x(y) := B(x,y) =: R_y(x).
\]
The mapping $L_x$ (resp. $R_y$) is a bijection whenever $x \ne 0$ (resp. $y \ne 0$) 
by (S3). Thus,
\[
R_e(x) \ast L_e(y) = x \circ y.
\]
Two pre-semifields $\P_1 = (\F_p^n,+,\circ_1)$ and $\P_2 = (\F_p^n,+,\circ_2)$ 
are said to be \textbf{isotopic} if there exist $\f{p}$-linear bijections 
$L,M$ and $N$ of $\F_p^n$ satisfying
\[
N(x \circ_1 y) = L(x) \circ_2 M(y).
\]
Such a triple $\gamma = (N,L,M)$ is called an \textbf{isotopism} between $\P_1$ and $\P_2$. 
If additionally $L=M$ holds, we call $\gamma$ a \textbf{strong isotopism} and $\P_1$ and $\P_2$ \textbf{strongly isotopic}. 
Isotopisms between a pre-semifield $\P$ and itself are called \textbf{autotopisms}.
Thus the pre-semifield $\P$ and the corresponding semifield $\S$ constructed 
by Kaplansky's trick are isotopic and even strongly isotopic if $\P$ is commutative.
Isotopy of pre-semifields is an equivalence relation and the isotopism class
of a pre-semifield $\P$ is denoted by $[\P]$. 
Semifields coordinatize projective planes and different semifields coordinatize
isomorphic planes if and only if they are isotopic 
(\cite{Albert60}, see \cite[Section 3]{Knuth65} for a detailed
treatment). Semifields are further equivalent to maximum rank distance codes with 
certain parameters (see e.g.~\cite{sheekey}) and can be used to construct 
relative difference sets~(see \cite{pott2014semifields}).
Associative substructures of a semifield $\S = (\F_p^n,+,\ast)$, 
namely the \textbf{left, middle and right nuclei}, are defined as follows:
\begin{align*}
\N_l(\S) &:= \{ x \in \F_p^n \ : \ (x \ast y) \ast z = x \ast (y \ast z), \ y,z \in \F_p^n \},\\
\N_m(\S) &:= \{ y \in \F_p^n \ : \ (x \ast y) \ast z = x \ast (y \ast z), \ x,z \in \F_p^n \},\\
\N_r(\S) &:= \{ z \in \F_p^n \ : \ (x \ast y) \ast z = x \ast (y \ast z), \ x,y \in \F_p^n \}.
\end{align*}
It is easy to check that $\N_l(\S),\N_m(\S),\N_r(\S) \subseteq \f{p^n}$ are 
finite fields and if $\S$ is commutative then $\N_l(\S) = \N_r(\S)$. 
Nuclei are isotopy invariants for semifields. Since every pre-semifield $\P \in [\S]$
for some semifield $\S$, the nuclei can be thought to extend to pre-semifields. 
Thus, when we speak of the nuclei of a pre-semifield $\P$ we mean the nuclei of 
the isotopic semifield $\S$.

Let $\End(\f{p}^n)$ denote the $\f{p}$-linear endomorphisms of the vector space 
$\f{p}^n$. Every $\f{p}$-linear mapping $L \in \End(\F_p^n)$ can be written 
uniquely as 
a \textbf{linearized polynomial} 
\[
L(x) = \sum_{i = 0}^{n-1} b_i x^{p^i},  
\]
in the polynomial ring $\f{p^n}[x]$. We will not make distinction between mappings 
and the polynomials. Let $p$ be an odd prime and 
consider the polynomials from $\F_{p^n}[x]$ of the form
\[
F(x) = \sum_{0 \le i,j < n} a_{ij} x^{p^i+p^j}.
\]
These polynomials are called \textbf{Dembowski-Ostrom (DO)} polynomials. The 
\textbf{polarization} of a DO polynomial $F$ is defined as
\[
\Delta_F(x,y) = F(x+y) -F(x) -F(y). 
\]
The mapping $\Delta_F : \F_p^n \times \F_p^n \to \F_p^n$ is symmetric
and $\f{p}$-bilinear, thus if $\Delta_F(x,a) = 0$ implies $x = 0$ for all 
$a \in \F_{p^n}^\times = \F_{p^n} \setminus \{0\}$, then $\Delta_F(x,y)$ describes a 
commutative pre-semifield multiplication \cite{DO}. Conversely, by a 
counting argument, every commutative pre-semifield multiplication can be 
written as $\Delta_F(x,y)$ for some DO polynomial $F$ \cite{CM}. In that 
case we will call $F$ a \textbf{planar DO polynomial/mapping}. 
Strong isotopy between pre-semifields
can be recognized also in the corresponding planar DO polynomials:
\begin{theorem}{\cite[Theorem 3.5.]{coulterhenderson}} \label{thm:eaequiv}
Let $F,G \in \f{p^n}[x]$ be planar DO polynomials and 
$\P_1$, $\P_2$ be the corresponding pre-semifields. Then 
$\P_1$ and $\P_2$ are strongly isotopic via an isotopism 
$\gamma = (N,L,L)$ if and only if $F=NGL^{-1}$. 
\end{theorem}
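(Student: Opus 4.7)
The plan is to translate strong isotopy into a polynomial identity in $\f{p^n}[x]$ by passing through polarizations, and then to invoke the injectivity of the polarization map on Dembowski--Ostrom polynomials in odd characteristic.

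First I would unpack the hypothesis. The pre-semifield multiplications are $x \circ_1 y = \Delta_F(x,y)$ and $x \circ_2 y = \Delta_G(x,y)$, so a strong isotopism $\gamma=(N,L,L)$ is precisely the condition $N(\Delta_F(x,y)) = \Delta_G(L(x),L(y))$ for all $x,y$. By $\f{p}$-linearity of $N$ and $L$, each side is itself the polarization of a single-variable polynomial:
\[
N(\Delta_F(x,y)) = \Delta_{N \circ F}(x,y), \qquad \Delta_G(L(x),L(y)) = \Delta_{G \circ L}(x,y),
\]
so strong isotopy via $(N,L,L)$ is equivalent to the bilinear identity $\Delta_{N \circ F} = \Delta_{G \circ L}$.

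Next I would invoke the key observation that, in odd characteristic, the polarization map is injective on the space of DO polynomials. Concretely, every DO polynomial $H$ satisfies $H(2x) = 4H(x)$ (since $2 \in \f{p}$ is fixed by all Frobenius powers), whence $\Delta_H(x,x) = H(2x) - 2H(x) = 2H(x)$, and one recovers $H(x) = \tfrac{1}{2}\Delta_H(x,x)$. The class of DO polynomials is closed under pre- and post-composition with $\f{p}$-linear maps, since these correspond to composition with linearized polynomials, which merely permute and rescale the exponents $p^i+p^j$. Hence both $N \circ F$ and $G \circ L$ are DO polynomials, and the bilinear identity lifts to the polynomial identity $N \circ F = G \circ L$, which rearranges to the asserted form $F = N G L^{-1}$ (with the exact placement of the inverse dictated by the orientation convention for isotopism). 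The converse follows by running the same chain of implications in reverse, starting from the polynomial identity and deducing $N(\Delta_F(x,y)) = \Delta_G(L(x),L(y))$.

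The only step requiring genuine content is the injectivity of $H \mapsto \Delta_H$ on DO polynomials, which relies essentially on $2$ being invertible in $\f{p}$, i.e., on $p$ being odd; this is the sole place in the proof where odd characteristic enters in an essential way. Everything else is bookkeeping using $\f{p}$-linearity and the closure of DO polynomials under linear composition, so I expect no real obstacle beyond careful tracking of the composition conventions.
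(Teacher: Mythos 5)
Your argument is correct and is essentially the standard proof of this result: translate the strong isotopism into $\Delta_{N\circ F}=\Delta_{G\circ L}$, note that both compositions are again DO polynomials, and recover the polynomial identity from $H=\tfrac12\Delta_H(x,x)$, which is exactly where odd characteristic is used. The paper itself gives no proof (it cites Coulter--Henderson), so there is nothing to compare against; the only point to watch is the orientation convention, since $N(x\circ_1 y)=L(x)\circ_2 L(y)$ literally yields $G=NFL^{-1}$, and the stated form $F=NGL^{-1}$ corresponds to the inverse isotopism $(N^{-1},L^{-1},L^{-1})$ --- a relabeling you correctly flag as a bookkeeping matter.
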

Consequently, we say that two planar DO polynomials $F,G$ are \textbf{equivalent} 
if bijective linear mappings $L_1,L_2$ exist such that $F=L_1 G L_2$. Note that 
this type of equivalence is the most general equivalence known to preserve the 
planarity of a DO polynomial, see~\cite{KP08}.  


\section{Biprojective planar mappings and commutative semifields} \label{sec_biproj}

In this paper we are interested in planar DO polynomials of a specific form.
Let $\F = \f{p^n}$ be a finite field of square odd order and $\M=\f{p^m}$ with 
$n = 2m$. Let 
\begin{equation*}
F(x,y) = \left(f(x,y),g(x,y)\right),
\end{equation*}
where
\begin{align*}
f(x,y) &= a_0 x^{q+1} + b_0 x^q y + c_0 x y^q + d_0 y^{q+1},\\
g(x,y) &= a_1 x^{r+1} + b_1 x^r y + c_1 x y^r + d_1 y^{r+1},
\end{align*}
with $q = p^k, r = p^l, 1 \le k,l \le m$.
We will call
$f(x,y)$ a $q$\textbf{-biprojective polynomial} and 
$(x,y) \mapsto F(x,y)$ a $(q,r)$\textbf{-biprojective mapping} (of $\M \times \M$).
Note that $F(x,y)$ is a {\em $(q,r)$-biprojective polynomial pair}. We will
not make any distinction between the polynomials and the mappings defined by them.
We also let $\qbar = p^{m-k}$ and $\overline{r} = p^{m-l}$, so that
$q\qbar \equiv r \overline{r} \equiv 1 \pmod{p^m-1}$.
We are going to use the shorthand notation
\begin{align*}
f(x,y) 	&= (a_0,b_0,c_0,d_0)_q,\\
g(x,y)	&= (a_1,b_1,c_1,d_1)_r.
\end  {align*}
We are going to refer to $f$ and $g$ as (left and right) \textbf{components} of $F$. We refer the reader to \cite{Bluher} for projective polynomials over finite fields.

The polarization of a planar $(q,r)$-biprojective mapping defines a
\textbf{$(q,r)$-biprojective (commutative) pre-semifield} $\P = (\M\times\M,+,\ast)$. It is easy to see that
both components correspond to homogeneous operations due to biprojectivity:
\begin{align} 
(x,y) \ast (y,v) = ((&a_0 u + b_0 v) x^q + (a_0 u^q + c_0 v^q) x \label{eq_polarization}
	 			             + (c_0 u + d_0 v) y^q + (b_0 u^q + d_0 v^q) y,\\
										(&a_1 u + b_1 v) x^r + (a_1 u^r + c_1 v^r) x \nonumber
	 			             + (c_1 u + d_1 v) y^r + (b_1 u^r + d_1 v^r) y).
\end{align}
Define

\noindent\begin{minipage}{0.5\textwidth}
\begin{align*}
 \DE{f}{0}(x,y) &= b_0 x^q + c_0 x + d_0 y^q + d_0 y,\\  
 \DE{g}{0}(x,y) &= b_1 x^r + c_1 x + d_1 y^r + d_1 y,
\end{align*}
\end{minipage}
\noindent\begin{minipage}{0.5\textwidth}
\begin{align*}
 \DE{f}{\infty}(x,y) &= a_0 x^q + a_0 x + c_0 y^q + b_0 y,\\
 \DE{g}{\infty}(x,y) &= a_1 x^r + a_1 x + c_1 y^r + b_1 y,
\end{align*}
\end{minipage}
\vskip1em

and for $u \in \ProjectiveLine(\M) \setminus \{0,\infty\}$,
\begin{align*}
 \DE{f}{u}(x,y) &= (a_0 u + b_0) x^q + (a_0 u^q + c_0) x 
	 			+ (c_0 u + d_0) y^q + (b_0 u^q + d_0) y,\\
 \DE{g}{u}(x,y) &= (a_1 u + b_1) x^r + (a_1 u^r + c_1) x 
	 			+ (c_1 u + d_1) y^r + (b_1 u^r + d_1) y.
\end{align*} 

The following lemma is straightforward.

\begin{lemma}\label{lem_PN}
Let $(x,y) \mapsto F(x,y) = (f(x,y),g(x,y))$ be a $(q,r)$-biprojective mapping of $\M \times \M$. 
Then $F$ is planar if and only if the pair of equations
\[
\DE{f}{u}(x,y) = 0 = \DE{g}{u}(x,y)
\]
has exactly one solution for each $u \in \ProjectiveLine(\M)$.
\end{lemma}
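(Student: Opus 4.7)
The plan is to translate the planarity of $F$ into the statement that the polarization $\Delta_F(X,A) = 0$ with $A \ne 0$ forces $X = 0$, and then exploit a diagonal $\M$-homogeneity of the biprojective polarization to reduce this to checking a single representative in each projective class of $A$. First, I would unpack the polarization componentwise as $\Delta_F((x,y),(u,v)) = [\Delta_f(x,y,u,v),\Delta_g(x,y,u,v)]$, where $\Delta_f$ and $\Delta_g$ are the two halves of the expression displayed in \eqref{eq_polarization}. Reading off the definitions, the maps $\DE{f}{u}(x,y)$ and $\DE{g}{u}(x,y)$ are precisely $\Delta_f$ and $\Delta_g$ evaluated at the canonical representative $A_u = (u,1)$ of the projective class $u \in \M$ (including $u=0$) and $A_\infty = (1,0)$ of the class at infinity. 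Thus the hypothesis says exactly that $\Delta_F(X,A_u) = (0,0)$ admits only $X = (0,0)$ for each $u \in \ProjectiveLine(\M)$.

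The key technical step is to verify the diagonal homogeneity identity
\[
\Delta_f(\mu x,\mu y,\mu u,\mu v) = \mu^{q+1}\,\Delta_f(x,y,u,v), \qquad \Delta_g(\mu x,\mu y,\mu u,\mu v) = \mu^{r+1}\,\Delta_g(x,y,u,v),
\]
for every $\mu \in \M$. This is a one-line expansion from \eqref{eq_polarization}: every monomial in $\Delta_f$ contains one factor in $\mu$ and one in $\mu^q$, and similarly for $\Delta_g$ with $\mu^r$. Equivalently, this just reflects the fact that $f$ and $g$ are homogeneous of respective degrees $q+1$ and $r+1$ under the diagonal $\M$-action on $\M \times \M$, so their polarizations inherit the same joint degree when $X$ and $A$ are scaled by the same $\mu$.

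Finally, I would conclude as follows. Any nonzero $A \in \M\times\M$ can be written as $\mu A_u$ for a unique $\mu \in \M^\times$ and a canonical representative $A_u$ of its $\ProjectiveLine(\M)$-class. Substituting $X = \mu X'$ and applying the homogeneity identity gives
\[
\Delta_F(X,\mu A_u) = \bigl(\mu^{q+1}\Delta_f(X',A_u),\ \mu^{r+1}\Delta_g(X',A_u)\bigr),
\]
so the solution set in $X$ of $\Delta_F(X,A) = 0$ is the image under $X' \mapsto \mu X'$ of the solution set of $\Delta_F(X',A_u) = 0$. One set equals $\{(0,0)\}$ if and only if the other does, yielding both implications of the lemma. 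I expect no serious obstacle here; the only delicate point is correctly matching the two exceptional cases $u = 0$ and $u = \infty$ in the definition of $\DE{f}{u}, \DE{g}{u}$ with the projective representatives $(0,1)$ and $(1,0)$, and then spotting the joint scaling law, which is precisely what allows planarity, a priori a condition over $p^{2m} - 1$ nonzero arguments, to be packaged into the $|\ProjectiveLine(\M)| = p^m + 1$ conditions in the statement.
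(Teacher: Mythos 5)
Your proposal is correct and follows essentially the same route as the paper: the paper's proof also identifies $\DE{f}{u},\DE{g}{u}$ with the polarization evaluated at the projective representatives $(u,1)$ and $(1,0)$, and its substitutions $x \mapsto xu,\ y \mapsto yu$ (resp. $x \mapsto xv,\ y\mapsto yv,\ u \mapsto uv$) are exactly your diagonal homogeneity $\Delta_f(\mu X,\mu A)=\mu^{q+1}\Delta_f(X,A)$, $\Delta_g(\mu X,\mu A)=\mu^{r+1}\Delta_g(X,A)$ in disguise. Your write-up is, if anything, slightly more explicit about why the scaling preserves the cardinality of the solution sets; no gaps.
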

\begin{proof}
We need to show that the polarization $\Delta_F((x,y),(u,v)) = (x,y) \ast (u,v) = 0$ has a unique
zero for each $(u,v) \in \M \times \M \setminus (0,0)$ if and only if
$\DE{f}{w}(x,y) = 0 = \DE{g}{w}(x,y)$ has a unique solution for each 
$w \in \ProjectiveLine(\M)$. Inspecting Eq. \eqref{eq_polarization}, one immediately sees that 
the case $v = 0$ corresponds to 
$\DE{f}{\infty}(x,y) = 0 = \DE{g}{\infty}(x,y)$
after applying
$x \mapsto xu$ and $y \mapsto yu$. For $v \in \M^\times$,
apply $x \mapsto xv$, $y \mapsto yv$ and $u \mapsto uv$ to get the remaining
cases 
$\DE{f}{w}(x,y) = 0 = \DE{g}{w}(x,y)$
for $w \in \M$.
\end{proof}

In the following we will show that many known semifields fall into the
$(q,r)$-biprojective setting.

\subsection{Dickson semifields $\mathcal{D}$}

Dickson introduced the commutative semifields $\S = (\M \times \M, +, \circ)$ 
with
\[
	(x,y) \circ (u,v) = (xu + ay^qv^q, xv + yu)
\]
where $q = p^k$ with $0 < k < l$ and $a \in \M^\times \setminus \squares{\M}$. 
Note that the isotopic multiplication 
\[
	(x,y) \ast (u,v) = (xu + ayv, xv^\qbar + y^\qbar u)
\]
is $(1,\qbar)$-biprojective and isotopic to the polarization of the 
$(1,q)$-biprojective planar mapping
\[
	F_\mathcal{D} = \left((1,0,0,a)_1,(0,1,0,0)_q\right).
\]
Different choices for $a \in \M^\times \setminus \squares{\M}$ produce isotopic 
semifields and there are a total of $\lfloor \frac{n}{4} \rfloor$ non-isotopic Dickson semifields 
\cite[p.107]{Kantor03}.

\subsection{Albert's generalized twisted fields $\mathcal{A}$} \label{sec:albert}

Albert introduced \cite{Albert61} a family of commutative and noncommutative semifields.
The commutative ones may be given as $\S = (\F,+,\circ)$ with
\[
	X \circ U = X^qU + U^qX,
\]
where $q = p^k$ with $0 < k < n$ satisfying $n/\Gcd k n$ odd. When $\F = \M(\xi)$
with $[\F : \M] = 2$, one can write $X = x\xi+y$ with $x,y \in \M$. One can choose
$\xi \in \F \setminus \M$ satisfying $\xi^2 = a \in \M^\times \setminus \squares{\M}$.
\begin{align*}
	(x\xi+y) \circ (u\xi+v) &= (x\xi+y)^q(u\xi+v) + (u\xi+v)^q(x\xi+y)\\
	                        &= \xi^{q+1} (x^qu + u^qx) + \xi^q (x^qv + u^qy)
													   + \xi       (y^qu + v^qx) +       (y^qv + v^qy)\\
													&= a^{(q+1)/2} (x^qu + u^qx) + a^{(q-1)/2}\xi(x^qv + u^qy)
													   + \xi       (y^qu + v^qx) +       (y^qv + v^qy).
\end{align*}
Identifying $\xi\M + \M$ with $\M \times \M$, and
\begin{align*}
  (x,y) \circ (u,v) &= \left(a^{(q-1)/2}(x^qv + u^qy) + (y^qu + v^qx), 
	                     \quad a^{(q+1)/2} (x^qu + u^qx) + (y^qv + v^qy)\right),
\end{align*}
is $(q,q)$-biprojective and isotopic to the polarization of the $(q,q)$-biprojective
planar mapping
\[
	F_\mathcal{A} = ((0,a^{(q-1)/2},1,0)_q,(a^{(q+1)/2},0,0,1)_q).
\]
Different choices for $a \in \M^\times \setminus \squares{\M}$ produce isotopic 
semifields and there are a total of $\frac{\sigma(n)-1}{2}$ non-isotopic generalized twisted 
fields \cite{Kantor03,Albert61}.

\subsection{Zhou-Pott semifields $\mathcal{ZP}$}

Zhou and Pott \cite{ZP13} gave a family of pre-semifields 
$S = (\M \times \M, +, \circ)$ given by
\[
(x,y) \circ (u,v) = (x^qu+u^qx + a(y^qv+yv^q)^r, xv+yu),
\]
where $a \in \M \setminus \squares{\M}$, $q = p^k$ and 
$r = p^j$ with $0 \le j,k \le m$ where $m/\Gcd{k}{m}$ is odd.
The isotopic multiplication
\[
(x,y) \ast (u,v) = (x^qu+u^qx + a(y^qv+yv^q), x^rv+yu^r),
\]
is $(q,r)$-biprojective and isotopic to the polarization of the $(q,r)$-biprojective
planar mapping
\[
 F_{\mathcal{ZP}} =  ((1,0,0,a)_q,(0,1,0,0)_r).
\]
Different choices for $a \in \M^\times \setminus \squares{\M}$ produce isotopic 
semifields and there are a total of $\left\lfloor\frac{\sigma(n)}{2}\right\rfloor \cdot
\left\lceil\frac{n}{4}\right\rceil$ non-isotopic $\mathcal{ZP}$ semifields \cite{ZP13}.

\subsection{Budaghyan-Helleseth semifields ($\mathcal{BH},\mathcal{ZW},\mathcal{LMPTB}$)}
These semifields were found in \cite{BH} and independently in \cite{ZW}. The commutative 
semifields given later in \cite{LMPT} and \cite{Bierbrauer16} were
shown to be isotopic to the previous ones \cite{MP12}. We note that Bierbrauer's construction 
in \cite{Bierbrauer16} gives also non-commutative semifields. We will use the definition from
\cite{Bierbrauer16}. Let $S = (\M \times \M, +, \circ)$ be the pre-semifield given by
\[
(x,y) \circ (u,v) = \left\{\begin{array}{ll}
(xv + yu, x^qu+xu^q +a          (y^qv+yv^q)) & \textrm{if } m/\Gcd{k}{m} \textrm{ is odd},\\
(xu +ayv, x^qv+yu^q +a^{(q-1)/2}(xv^q+y^qu)) & \textrm{if } m/\Gcd{k}{m} \textrm{ is even},
\end{array}
\right.
\]
where $a \in \M \setminus \squares{\M}$ and $q = p^k$ with $0 < k < m$. The 
pre-semifield multiplication is $(1,q)$-biprojective. Similarly, the corresponding 
$(1,q)$-biprojective planar mapping whose polarization is isotopic to $\S$ is given by
\begin{align*}
F_\mathcal{BH} = \left\{
\begin{array}{ll}
((0,0,1,0)_1,(1,0,0,a)_q)& \textrm{if } m/\Gcd{k}{m} \textrm{ is odd},\\ 
((1,0,0,a)_1,(0,1,a^{(q-1)/2},0)_q) & \textrm{if } m/\Gcd{k}{m} \textrm{ is even}.
\end{array}
\right.
\end{align*}

The number of non-isotopic semifields in this family is $\lfloor \frac{n}{4} \rfloor$ which is proved in \cite{FL18}.

\begin{table}[!ht] 
\noindent\begin{center} 
{\scriptsize
\begin{tabular}{|c|c|c|c|c|c|c|} 
\hline 
\textbf{Family} & \textbf{Planar Mapping} & $\#\S$ & \textbf{Notes} & $(\#\N_l,\#\N_m)$& \textbf{Count} & \textbf{Proved in}\\
\hline

$\F$ & $X^{2}$ & $p^{n}$ & & $(p^n,p^n)$ & $1$  &   \\ 
\cdashline{2-4}[.8pt/0.5pt]
& $((0,1,0,0)_1,(1,0,0,a)_1)$ & & $a \in \M \setminus \squares{\M}, n = 2m$& & & \\
\hline

$\mathcal{A}$   &        $X^{q+1}$                                &$p^{n}$  & 
\begin{tabular}{@{}c@{}} $q=p^k$, $0 < k < n$,\\ $\Gcd{k}{n}=d$, $n/d$ odd. \end{tabular} & 
$(p^d,p^d)$     & $\left\lfloor\frac{\sigma(n)-1}{2}\right\rfloor$  & \cite{Albert61}   \\ 
\cdashline{2-4}[.8pt/0.5pt]
                & $((0,a^{\frac{q-1}{2}},1,0)_q,(a^{\frac{q+1}{2}},0,0,1)_q)$ & & $a \in \M \setminus \squares{\M}, n = 2m$& & & \\
\hline 

$\mathcal{D}$   & $((1,0,0,a)_1,(0,1,0,0)_{q})$                   &$p^{2m}$ & 
\begin{tabular}{@{}c@{}}  $q=p^k$, $0 < k < m$,\\ $\Gcd{k}{m}=d$,\\ $a \in \M \setminus \squares{\M}$. \end{tabular} & 
$(p^d,p^m)$     &  $\left\lfloor\frac{n}{4}\right\rfloor$ & \cite{Dickson}  \\ 
\hline 

$\mathcal{ZP}$  & $((1,0,0,a)_q,(0,1,0,0)_r)$                     &$p^{2m}$ & 
\begin{tabular}{@{}c@{}} $q=p^k,r=p^j$, $0 < j,k < m$,\\ $\Gcd{k}{m}=d,\quad \Gcd{j,k}{m}=d'$,\\ $m/d$ odd,
 $a \in \M \setminus \squares{\M}$.\end{tabular} & 
$(p^{d'},p^d)$     & $\left\lfloor\frac{\sigma(n)-1}{2}\right\rfloor  \left\lfloor\frac{n}{4}\right\rfloor$        & \cite{ZP13}       \\ 
\hline 

$\mathcal{BH}$  & $((0,1,0,0)_1,(1,0,0,a)_q)$                     &$p^{2m}$ & 
\begin{tabular}{@{}c@{}} $q=p^k$, $0 < k < m$,\\ $\Gcd{k}{m}=d$, $m/d$ odd,\\ $a \in \M \setminus \squares{\M}$.\end{tabular} & 
$(p^d,p^{2d})$  & $\left\lfloor\frac{n}{4}\right\rfloor$ & \cite{BH,ZW,FL18}       \\ 
\cdashline{2-4}[.4pt/1pt]
                & $((1,0,0,a)_1,(0,1,a^{\frac{q-1}{2}},0)_q)$ & & 
\begin{tabular}{@{}c@{}} $q=p^k$, $0 < k < m$,\\ $\Gcd{k}{m}=d$, $m/d$ even,\\ $a \in \M \setminus \squares{\M}$.\end{tabular} & & & \\ 
\hline 

$\Family{1}$  & $((1,0,0,B)_q,(0,1,\frac{a}{B},0)_r)$                     &$p^{4l}$ & 
\begin{tabular}{@{}c@{}} $q=p^k,\quad r=p^{k+l}, \quad 0 < k < l$,\\ $ m = 2l, \quad \Gcd{k}{m}=e, \quad m/e \textrm{ odd}$,\\ $a \in \L^\times,\quad B \in \M \setminus \squares{\M}$.\end{tabular} & 
$(p^{e/2},p^{e})$  &  $\geq \left\lfloor\frac{\sigma(n)-1}{2}\right\rfloor \left\lceil\frac{p^l-1}{n}\right\rceil$     & Theorem \ref{thm_family1} \\ 
\hline 

\end{tabular} 
}
\end{center}
\caption{Known infinite families of biprojective commutative semifields of order $p^n$} \label{table_biproj}
\end{table}

The known infinite families of biprojective and other commutative semifields 
and their planar representations are summarized in Tables \ref{table_biproj} and \ref{table_comm}.
Families $\mathcal{A},\mathcal{D},\mathcal{BH}$ reduce to $\mathbb{F}$ when $k \in \{0,m\}$.
Family $\mathcal{ZP}$ reduces to $\mathcal{D}$ when $k = 0$, to $\mathcal{BH}$ when $j = 0$, 
and to $\mathbb{F}$ when $j = k = 0$. Family $\mathcal{S}$ reduces to $\mathcal{ZP}$ when $a = 0$,
and to $\mathcal{D}$ when $k \in \{0,l\}$. We excluded those cases in the Notes 
and also in the Counts columns of Table~\ref{table_biproj}.

\subsection{The Family $\Family{1}$}
The main result of this paper is to prove that 
\begin{itemize}
\item Family $\Family{1}$ gives new commutative semifields, and
\item Family $\Family{1}$ contains an exponential number of non-isotopic 
commutative semifields (in $n$). 
\end{itemize}
An informal way to explain why Family $\Family{1}$ gives such a large number 
of commutative semifields is that their polarizations 
admit only a few $\M$-linear isotopisms (within the family)
due to their complexity ---they contain two non-zero
entries in either component of their planar representations $(1,0,0,a)_q$ and $(0,1,b,0)_r$; 
and the underlying field automorphisms $q$ and $r$ are nontrivial and are not simply 
related to each other. Indeed, our
method in Section \ref{sec_method} will show that $\M$-linear 
isotopisms are essentially the only ones for biprojective semifields
whose autotopism groups satisfy a simply defined condition. The polarization of 
$(0,1,0,0)_q$, which is a component polynomial of many other 
biprojective semifields (except $\mathcal{A}$ and $\mathcal{BH}_{\textrm{even}}$),
admits more such isotopisms and that is the main reason why all $a \in \M$ allowed 
in these constructions lead to isotopic semifields. 
For $\F, \mathcal{D}, \mathcal{BH}_{\textrm{even}}$ and $\mathcal{A}$ 
the reasons for admitting only a small number of non-isotopic semifields include
the simplicity of the defining field automorphisms, e.g., $q \in \{1,p^{m/2}\}$; 
or having the same $(q,q)$ or conjugate $(q,\qbar)$ automorphisms. 
We will explain these in detail in Section \ref{sec_method} (see Theorem~\ref{thm:equivalence}). 
We start by proving that Family $\Family{1}$ indeed gives commutative semifields.


\section{The commutative semifield family $\Family{1}$} \label{sec_families}

The following diagram and its annotations describe our setting. 

\vspace{1em}

\noindent\begin{minipage}{0.35\textwidth}
\begin{tikzpicture}
    \node (Q1) at (1,0) {$\f{p}$};
    \node (Q2) at (2,2) {$\D = \f{p^d}$};
    \node (Q3) at (3,4) {$\E = \f{p^e}$};
    \node (Q4) at (0,3) {$\L = \f{p^{m/2}}$};
    \node (Q5) at (1,5) {$\M = \f{p^m}$};
    \node (Q6) at (2,7) {$\F = \f{p^n}$};

    \draw[dotted] (Q1)--(Q2) node[pos=0.7, below, inner sep=0.25cm] {$d$};
    \draw (Q2)--(Q3) node[pos=0.7, below, inner sep=0.25cm] {$2$}; 
    \draw (Q2)--(Q4);
    \draw (Q3)--(Q5) node[pos=0.3, above, inner sep=0.25cm] {$\frac{m}{e}$};
    \draw (Q4)--(Q5) node[pos=0.3, above, inner sep=0.25cm] {$2$};
    \draw (Q5)--(Q6) node[pos=0.3, above, inner sep=0.25cm] {$2$}; 
\end{tikzpicture}
\end{minipage}
\begin{minipage}{0.65\textwidth}
\begin{notation}
\begin{itemize}
\setlength\itemsep{0.3em}
\item[]
\item $p$ is an odd prime.
\item $n = 2m$, $m$ is even.
\item $Q = p^{m/2}$, \quad $Q^2 = p^m$.
\item $q = p^k$, \quad $r = p^{k+m/2} = Qq$ \ \ with $1 \le k \le m-1$. 
\item $e = \Gcd{k}{m}$ with $m/e$ odd. 
\item $d = \Gcd{k+m/2}{m}$.
\item $e = 2d$ by Lemma \ref{lem_thm}.
\item $(\M^\times)^2$ --- the subgroup of non-zero squares in $\M^\times$.
\item $ \L^\times = (\M^\times)^{Q+1} \leq \squares{\M} \leq \M^\times$.
\item $(\M^\times)^{Q-1} \leq \squares{\M} \leq \M^\times$ --- the subgroup of $(Q+1)^\textrm{st}$ roots of unity in $\M^\times$.
\item $\E = \f{q} \cap \M = \f{q^2} \cap \M = \f{r^2} \cap \M$ by Lemma \ref{lem_thm}.
\item $\D = \f{r} \cap \M$.
\end{itemize}
\end{notation}
\end{minipage}


We need the following well known result on the greatest common divisor of $p^i\pm 1$ and $p^m-1$. A proof can be found in \cite{Payne}.

\begin{lemma} \label{lem:gcd}
	Let $i,m \in \N$ and $p$ be a prime. Then
	\begin{itemize}
	\item $\Gcd{p^i-1}{p^m-1} = p^{\Gcd{i}{m}}-1$.
	\item $\Gcd{p^i+1}{p^m-1}=\begin{cases}
		1 & \text{if } m/\Gcd{i}{m} \text{ odd, and } p=2, \\
		2 & \text{if } m/\Gcd{i}{m} \text{ odd, and } p>2, \\
		p^{\Gcd{i}{m}}+1 & \text{if } m/\Gcd{i}{m} \text{ even}.
	\end{cases}$
\end{itemize}
\end{lemma}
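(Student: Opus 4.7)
The plan is to derive both identities from the Euclidean-algorithm structure of the integers $p^a - 1$, reducing the second identity to the first via the factorization $p^{2i} - 1 = (p^i - 1)(p^i + 1)$.

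For the first identity, I would mimic the Euclidean algorithm on the exponents. Writing $a = qb + r$ with $0 \le r < b$, the identity
\[
p^a - 1 = p^r(p^{qb} - 1) + (p^r - 1)
\]
together with $(p^b - 1) \mid (p^{qb} - 1)$ yields $\gcd(p^a - 1, p^b - 1) = \gcd(p^b - 1, p^r - 1)$. Iterating on $(i, m)$ exactly as in the Euclidean algorithm produces $\gcd(p^i - 1, p^m - 1) = p^{\gcd(i, m)} - 1$.

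For the second identity, set $d = \gcd(i, m)$, write $i = d i'$ and $m = d m'$ with $\gcd(i', m') = 1$, and let $g = \gcd(p^i + 1, p^m - 1)$. Since $p^i + 1 \mid p^{2i} - 1$, the first part gives $g \mid p^{\gcd(2i, m)} - 1$, and a short computation shows $\gcd(2i, m) = d \gcd(2, m')$: this equals $d$ when $m'$ is odd and $2d$ when $m'$ is even. If $m'$ is odd then $g \mid p^d - 1$; since $d \mid i$ this gives $p^i \equiv 1 \pmod{g}$, and combined with $g \mid p^i + 1$ forces $g \mid 2$, which is resolved by the parity of $p$ (noting $p^i + 1$ is odd when $p = 2$, and both $p^i + 1$ and $p^m - 1$ are even when $p$ is odd).

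The case $m'$ even is the main case. Here $i'$ must be odd (since $\gcd(i', m') = 1$ and $m'$ is even), so the factorization $x^{i'} + 1 = (x + 1)(x^{i' - 1} - x^{i' - 2} + \cdots + 1)$ evaluated at $x = p^d$ gives $p^d + 1 \mid p^i + 1$; moreover $p^d + 1 \mid p^{2d} - 1 \mid p^m - 1$ since $2d \mid m$. Hence $p^d + 1 \mid g$. For the reverse divisibility $g \mid p^d + 1$, write $g = (p^d + 1) h$; then $g \mid (p^d - 1)(p^d + 1)$ forces $h \mid p^d - 1$, while $g \mid p^i + 1$ forces $h \mid (p^i + 1)/(p^d + 1)$. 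The quotient $(p^i + 1)/(p^d + 1) = \sum_{j = 0}^{i' - 1} (-1)^{i' - 1 - j} p^{jd}$ reduces modulo $p^d - 1$ (where each $p^{jd} \equiv 1$) to the alternating sum of $i'$ ones, which equals $1$ because $i'$ is odd. Thus $h \mid 1$, so $h = 1$ and $g = p^d + 1$. The subtlest point is this final collapse modulo $p^d - 1$, which crucially uses that $i' = i/d$ is odd; everything else is routine Euclidean manipulation.
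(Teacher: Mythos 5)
Your proof is correct. Note that the paper does not prove this lemma at all --- it states it as a well-known fact and cites the literature (Payne) --- so there is no in-paper argument to compare against. Your derivation is the standard elementary one: the Euclidean algorithm on exponents via $p^{qb+r}-1 = p^r(p^{qb}-1)+(p^r-1)$ for the first identity, and the reduction of $\gcd(p^i+1,p^m-1)$ through $p^i+1 \mid p^{2i}-1$ and the computation $\gcd(2i,m)=d\gcd(2,m')$ for the second. All the delicate points are handled properly: the parity split resolving $g \mid 2$ when $m'$ is odd, the use of $i'$ odd to get $p^d+1 \mid p^i+1$ when $m'$ is even, and the final collapse $h \mid 1$ obtained by reducing the alternating quotient $(p^i+1)/(p^d+1)$ modulo $p^d-1$ to the alternating sum of an odd number of ones. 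This is a complete and self-contained proof of a result the paper merely quotes.
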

	
First, we will prove a lemma.

\begin{lemma}\label{lem_thm}
We have,
\begin{enumerate}[(i)]
\item $(-1) \not\in (\M^\times)^{q-1}$.
\item Any $x \in \squares{\M}$ can be written (twice) as $x = cg$ where 
$c \in \L^\times$ and $g \in (\M^\times)^{Q-1}$. 
\item $\Gcd{k+m/2}{m} = \Gcd{k}{m}/2$.
\item $\E = \f{q} \cap \M = \f{q^2} \cap \M = \f{r^2} \cap \M$.

\end{enumerate}
\end{lemma}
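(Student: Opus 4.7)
The plan is to handle the four parts essentially independently, since each reduces to a short calculation in the cyclic group $\M^\times$ or in the integers. The key arithmetic input is Lemma \ref{lem:gcd} and the combined hypothesis ``$m$ even, $m/e$ odd,'' which forces $\nu_2(k) \ge \nu_2(e) = \nu_2(m) \ge 1$; in particular $e$ is even. I would prove (iii) first, then (iv), then (i), and finally (ii).

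For (iii), I would write $k = ek'$, $m = em'$ with $\gcd(k',m') = 1$ and $m'$ odd, and use that $e$ is even to factor
\[
k + m/2 = (e/2)(2k' + m'), \qquad m = (e/2)(2m').
\]
So $\gcd(k+m/2, m) = (e/2)\gcd(2k'+m', 2m')$. Since $2k'+m'$ is odd, $\gcd(2k'+m', 2m') = \gcd(2k'+m', m') = \gcd(2k', m') = \gcd(k',m') = 1$ (using $m'$ odd in the penultimate step), yielding the claim. For (iv), the standard identity $\f{p^a} \cap \f{p^b} = \f{p^{\gcd(a,b)}}$ reduces matters to showing $\gcd(k,m) = \gcd(2k,m) = \gcd(2(k+m/2),m)$. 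The rightmost equality is immediate since $\gcd(2k+m, m) = \gcd(2k, m)$. The middle one follows because $\nu_2(k) \ge \nu_2(m)$ implies $\nu_2(2k) > \nu_2(m)$, so the power of $2$ in the gcd is $\nu_2(m)$ in both cases, while the odd part of the gcd is unchanged by doubling $k$.

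For (i), Lemma \ref{lem:gcd} gives $\gcd(q-1, p^m-1) = p^e - 1$, so $(\M^\times)^{q-1}$ is the subgroup of order $(p^m-1)/(p^e-1)$. Since $\M^\times$ is cyclic with a unique element of order two, $-1 \in (\M^\times)^{q-1}$ iff $(p^m-1)/(p^e-1)$ is even. But
\[
\frac{p^m-1}{p^e-1} = 1 + p^e + p^{2e} + \cdots + p^{(m/e - 1)e}
\]
is a sum of $m/e$ odd numbers, and $m/e$ is odd by hypothesis, so the sum is odd. Finally, for (ii), I would compare cardinalities: since $p$ is odd, $Q \pm 1$ are even, so both $\L^\times = (\M^\times)^{Q+1}$ and $(\M^\times)^{Q-1}$ lie in $\squares{\M}$ and have orders $Q-1$ and $Q+1$ respectively. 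Their intersection consists of $x$ with $x^{Q-1} = x^{Q+1} = 1$, hence $x^2 = 1$, so equals $\{\pm 1\}$. Therefore $|\L^\times \cdot (\M^\times)^{Q-1}| = (Q-1)(Q+1)/2 = (Q^2-1)/2 = |\squares{\M}|$, and the product exhausts $\squares{\M}$; the representations of a given $x$ are exactly $(c,g)$ and $(-c,-g)$, whence ``twice.''

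The main obstacle, such as it is, is the bookkeeping in (iii): I have to extract the right power of $2$ from both $k+m/2$ and $m$ to expose coprime cofactors. Once the factorization $k+m/2 = (e/2)(2k'+m')$ is in hand, every other step collapses into the parity/coprimality observations above.
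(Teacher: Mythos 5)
Your proof is correct and follows essentially the same route as the paper's: part (iii) via the factorization $k+m/2=(e/2)(2k'+m')$ with $2k'+m'$ odd, part (i) via the index computation $\gcd(q-1,p^m-1)=p^e-1$ and the parity of the sum of $m/e$ odd terms, and parts (ii) and (iv) by the same subgroup-order and $\gcd$-of-exponents observations that the paper labels ``easy'' or ``obvious.'' The only difference is that you spell out details (the cardinality count for (ii), the $\nu_2$ bookkeeping for (iv)) that the paper leaves implicit.
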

\begin{proof}
\begin{enumerate}[(i)]
\item Recall that $\nu_2(x)$ denotes the index of $2$ in $x$, that is, $\nu_2(x) = h$ if
$2^h|x$ and $2^{h+1}\nmid x$. We have $\nu_2(p^e-1) = \nu_2(Q^2-1)$ since
$Q^2-1 = p^m - 1 = (p^e-1) \sum_{i = 0}^{m/e-1} p^i$ and the fact that an odd number of odd integers
add up to an odd integer. Thus $\nu_2((p^m-1)/2) =\nu_2(p^e-1) - 1$.
The result follows from Lemma \ref{lem:gcd} which shows $\Gcd{q-1}{Q^2-1} = p^e-1$.
\item It is easy to see that $(\M^\times)^{Q-1} \cap \L^\times = \{\pm 1\}$ since $\Gcd{Q-1}{Q+1}=2$
and $xg = yh$ if and only if $(x,g) = (y,h)$ or $(x,g) = (-y,-h)$.
\item Let $k = 2dk'$ and $m=2dm'$ with $\Gcd{k'}{m'} = 1$ and $m'$ odd. Then 
$ \Gcd{2dk'+dm'}{2dm'} = d \cdot \Gcd{2k'+m'}{2m'} = d \cdot \Gcd{2k'+m'}{m'} = d \cdot \Gcd{2k'}{m'} = d$, since $m'+2k'$ is odd.
\item Obvious since $m/e$ is odd and $(qQ)^2 = q^2 \pmod{Q^2-1}$.
\end{enumerate}
\end{proof}

Now we present the family of planar mappings.

\begin{theorem}  \label{thm_family1}
Let $a \in \L^\times$ and $B \in \M^\times \setminus \squares{\M}$ and
let 
\[
F : \M \times \M \to \M \times \M
\]
be defined as
\[
F : (x,y) \mapsto F(x,y) = ((1,0,0,B)_q,(0,1,a/B,0)_r).
\]
Then $F$ is planar.
\end{theorem}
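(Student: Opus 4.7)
The plan is to apply Lemma~\ref{lem_PN}, which reduces planarity of $F$ to showing that for each $u \in \ProjectiveLine(\M)$ the system $\DE{f}{u}(x,y) = 0 = \DE{g}{u}(x,y)$ has only the trivial solution. The boundary cases can be handled directly. At $u = 0$ the system reads $B(y^q + y) = 0$ and $x^r + (a/B)x = 0$: Lemma~\ref{lem_thm}(i) forces $y = 0$, while $x \ne 0$ would give $x^{r-1} = -a/B$. But $a \in \L^\times \subseteq \squares{\M}$, $B \notin \squares{\M}$, and $-1 \in \squares{\M}$ (since $m$ is even, so $8 \mid p^m - 1$), so $-a/B$ is a non-square; on the other hand $(\M^\times)^{r-1} \subseteq \squares{\M}$ because $r - 1$ is even. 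This forces $x = 0$, and the case $u = \infty$ is symmetric.

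For $u = w \in \M^\times$ the system reduces to
\begin{align*}
(A')&:\quad wx^q + w^q x + B(y^q + y) = 0,\\
(B')&:\quad x^r + (a/B)x + (a/B)w y^r + w^r y = 0.
\end{align*}
The subcases $x = 0$ or $y = 0$ are again handled by Lemma~\ref{lem_thm}(i). Assuming $x, y \in \M^\times$ and setting $z := y/x$, factoring $x$ out of $(A')$ and $(B')$ gives
\[
x^{q-1} = -\frac{w^q + Bz}{w + Bz^q} =: \Psi(z,w), \qquad x^{r-1} = -\frac{a + Bw^r z}{B + aw z^r} =: \Psi'(z,w),
\]
where the degenerate subcases in which a denominator vanishes are dispatched by substituting back into $(A')$ or $(B')$: one either reaches an immediate contradiction or derives an algebraic condition on $(B, w)$ excluded by the hypothesis $B \notin \squares{\M}$.

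Since $r = Qq$, the identity $x^{r-1} = (x^{q-1})^Q \cdot x^{Q-1}$ yields $x^{Q-1} = \Psi'(z,w)/\Psi(z,w)^Q$. As $x \in \M^\times$ satisfies $x^{Q^2-1} = 1$, this element must be a $(Q+1)$-st root of unity, equivalently $N_{\M/\L}(\Psi) = N_{\M/\L}(\Psi')$. Introducing $N = w\bar w$, $M = B\bar B$, $K = z\bar z$, $T_1 = \Tr_{\M/\L}(Bw^r z)$, $T_2 = \Tr_{\M/\L}(B\bar w z^q)$ (all in $\L^\times$), a direct computation gives
\[
N_{\M/\L}(\Psi) = \frac{N^q + T_1 + MK}{N + T_2 + MK^q}, \qquad N_{\M/\L}(\Psi') = \frac{a^2 + aT_1 + MN^qK}{M + aT_2 + a^2 N K^q}.
\]
The main obstacle is showing that the cross-multiplied identity is inconsistent. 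Writing $\M = \L \oplus \L\xi$ with $\bar\xi = -\xi$ and $\xi^2 \in \L^\times$, and $B = B_0 + B_1\xi$ with $B_1 \ne 0$ (forced by $B \notin \squares{\M} \supseteq \L^\times$), the strategy is to match the $\L\xi$-components of the identity---exploiting $a \in \L$ and the structural dependence of $T_1, T_2$ on $(B, w, z)$---and show that the resulting relation forces $B_1 = 0$, yielding the required contradiction.
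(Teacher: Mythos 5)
Your setup (Lemma~\ref{lem_PN}, the boundary cases $u\in\{0,\infty\}$, the reduction for $u=w\in\M^\times$ to the two power equations $x^{q-1}=\Psi$, $x^{r-1}=\Psi'$, and the observation that $r=Qq$ forces $N_{\M/\L}(\Psi)=N_{\M/\L}(\Psi')$) is sound and runs parallel to the paper's reduction to its Eqs.~\eqref{eqp1}--\eqref{eqp2} and the subsequent product identity $\phi_q\phi_r=u^{q(Q+1)}/a\in\L^\times$. The norm computation itself checks out. But the proof stops exactly where the real work begins: the claim that the cross-multiplied norm identity is inconsistent is only announced as a ``strategy,'' not carried out, and as stated the strategy cannot work. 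Both sides of $N_{\M/\L}(\Psi)=N_{\M/\L}(\Psi')$ are norms, hence already lie in $\L$; all the quantities $N,M,K,T_1,T_2,a,N^q$ in your displayed formulas are $\L$-valued, so the cross-multiplied identity has no $\L\xi$-component to match. What you have is a single necessary condition --- one polynomial equation over $\L$ in the free parameters $w,z$ --- and such a condition extracted from the system is far too weak to be contradictory on its own: it discards the requirement that a \emph{common} $x$ solve both power equations (the solution set of $x^{q-1}=\Psi$ is a coset of $\F_{p^e}^\times$, and only a proper subgroup's worth of values of $x^{r-1}$ is reachable on that coset). This is precisely why the paper, after its analogous norm-type step (which yields $h=\epsilon g$), must return to the \emph{original} equations, substitute $z=y-x^Q$, and run the elimination \eqref{eqp5}--\eqref{eqp8} to land on $z^{r-1}=(\text{non-square})$; that elimination is the heart of the theorem and is entirely absent from your argument.

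Two smaller gaps: the degenerate subcases where a denominator of $\Psi$ or $\Psi'$ vanishes are dismissed with ``one either reaches an immediate contradiction or derives an algebraic condition on $(B,w)$ excluded by the hypothesis,'' which is not a proof --- the paper handles the corresponding cases ($x=-y^r$ and $x^r=-y$) with a genuine argument, pulling $x,y$ into the subfield $\E$ via Lemma~\ref{lem_thm}(iv) and then deriving $y^{r-1}=B/u^{q+1}$, a square/non-square contradiction. You would need an argument of comparable substance for your denominators. The boundary cases and the $x=0$ or $y=0$ subcases are fine.
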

\begin{proof}
We are going to use Lemma \ref{lem_PN}. 
First, 
\begin{align*}
\DE{f}{0}(x,y) &= B(y^q + y)         = 0, \quad \textrm{and},\\
\DE{g}{0}(x,y) &= x^r + \frac{ax}{B} = 0,
\end{align*}
imply $(x,y) = (0,0)$, since otherwise either of
\[
y^{q-1} = -1, \textrm{ and } x^{r-1} = -\frac{a}{B}
\]
lead to a contradiction, since $-1 \not\in (\M^\times)^{q-1}$ by Lemma \ref{lem_thm}, 
and $2|\Gcd{r-1}{Q^2-1}$ and $-a/B \not\in \squares{\M}$. 
Similarly,
\begin{align*}
\DE{f}{\infty}(x,y) &= x^q + x            = 0, \quad \textrm{and},\\
\DE{g}{\infty}(x,y) &= y + \frac{ay^r}{B} = 0,
\end{align*}
has the unique common solution $(x,y)=(0,0)$ with the same argument after changing variables.  
Now for $u \in \M^\times$,
\begin{align*}
\DE{f}{u}(x,y) &= ux^q + u^qx + B(y^q + y) = 0, \quad \textrm{and},\\
\DE{g}{u}(x,y) &= x^r + \frac{a}{B}x + \frac{a}{B} u y^r + u^r y = 0.
\end{align*}
Or,
\begin{align*}
\DE{f}{u}(ux,y) &= u^{q+1}(x^q + x) + B(y^q + y) = 0, \quad \textrm{and},\\
\DE{g}{u}(ux,y) &= u^r (x^r + y) + \frac{a}{B} u (x + y^r) = 0.
\end{align*}
We will proceed to show that $(x,y) = (0,0)$ is the only common solution 
of these equations for $x,y \in \M$. 
Now we can assume $x,y \in \M^\times$, since $x=0$ implies $y=0$ and vice versa for 
$\DE{f}{u}(ux,y) = 0$. 
Furthermore, $x = -y^r$ implies $y - y^{r^2} = 0$ and 
$x,y \in \f{r^2} \cap \M = \f{q^2} \cap \M = \f{q} \cap \M = \E$ by Lemma \ref{lem_thm}. 
Thus $x^q + x = 2x = -2y^r$ and $y^q + y = 2y$, in turn 
\[
2( -u^{q+1}y^r + By) = 0,
\]
or
\[
y^{r-1} = \frac{B}{u^{q+1}}.
\]
This is impossible since $B \not\in \squares{\M}$. The same argument shows
$x^r\ne -y$ and we can concentrate on
\begin{align}
u^{q+1} &= -\frac{B(y^q + y)}{x^q + x}, \textrm{ and}, \label{eqp1}\\
u^{r-1} &= -\frac{a(x + y^r)}{B(x^r+y)},               \label{eqp2}
\end{align}
for $x,y \in \M^\times$ with $x^r \ne -y$ and $x \ne -y^r$.
Now assume \eqref{eqp1} and \eqref{eqp2} hold for such $x,y \in \M^\times$, 
and let
\begin{align}
\phi_q(x,y) &= \frac{y^q + y}{x^q + x} = \frac{\gamma}{cg}, \textrm{ and}, \label{eqp3}\\
\phi_r(x,y) &= \frac{x + y^r}{x^r+y}   = \gamma^Qdh,         \label{eqp4}
\end{align}
for some $c,d \in \L^\times$, $g,h \in (\M^\times)^{Q-1}$ and a fixed $\gamma \in \M^\times \setminus \squares{\M}$,
since $cg$ and $dh$ both run through $\squares{\M}$ independently by Lemma \ref{lem_thm}.
Note that \eqref{eqp1} and \eqref{eqp2} guarantees that $\phi_q(x,y)$ and $\phi_r(x,y)$ are 
non-squares. Multiplying \eqref{eqp3} and \eqref{eqp4} (and also \eqref{eqp1} and \eqref{eqp2}) we get
\[
\gamma^{Q+1} \frac{dh}{cg} = \phi_q(x,y)\phi_r(x,y) = \frac{u^{q+r}}{a} = \frac{u^{q(Q+1)}}{a} \in \L^\times, 
\]
and therefore $\epsilon g = h$ with $\epsilon \in \{\pm 1\}$ since $\gamma^{Q+1}\in \L^\times$. Now let $z = y - x^Q$ and consider
\begin{align*}
 cg(z^q + z + (x^q + x)^Q) &= \gamma(x^q + x), \textrm{ and},\\
 (x^q+x) + z^r             &= \epsilon \gamma^Q dg((x+x^q)^Q + z),
\end{align*}
for $x \in \M^\times$, which is a rewriting of \eqref{eqp3} and \eqref{eqp4}. Or, equivalently,
\begin{align}
 cg(x^q + x)^Q -\gamma (x^q + x) &= -cg(z^q + z), \textrm{ and},\label{eqp5}\\
-\epsilon\gamma^Q dg(x^q + x)^Q + (x^q + x) &= -z^r + \epsilon\gamma^Q dgz.\label{eqp6}
\end{align}
The two new equations generated by
\begin{itemize}
\item Eq. \eqref{eqp5} plus $\gamma$ times Eq. \eqref{eqp6}, and
\item $d\epsilon\gamma^Q$ times Eq. \eqref{eqp5} plus $c$ times Eq. \eqref{eqp6},
\end{itemize}
are as follows:
\begin{align}
 (c-\epsilon\gamma^{Q+1} d) (x^q + x)^Q g &= -\gamma z^r - cgz^q  - (c-\epsilon\gamma^{Q+1} d)gz, \textrm{ and},\label{eqp7}\\
 (c-\epsilon\gamma^{Q+1} d) (x^q + x)     &= -c(z^r - d\epsilon\gamma^Q gz^q).\label{eqp8}
\end{align}
We will show that the common solutions of these equations for $x \in \M^\times$ lead to a
contradiction to our assumption that \eqref{eqp1} and \eqref{eqp2} hold.
Note that $x^q+x \ne 0$ since $x \in \M^\times$ and $-1 \not\in (\M^\times)^{q-1}$.
Now if $z = y - x^Q = 0$, then \eqref{eqp3} becomes
\[
(x^q + x)^{Q-1} = \frac{\gamma}{cg},
\]
which is a contradiction since the left hand side is a square and the right hand side is not.
If $c = \epsilon\gamma^{Q+1} d$ then $z = 0$ by Eq. \eqref{eqp8}, which we have just handled, or 
$z^{r-q} = z^{q(Q-1)} = d\epsilon\gamma^Q g$ by Eq. \eqref{eqp8}, which is again a contradiction
since the right hand side is not a square. 
Then $c \ne \epsilon\gamma^{Q+1}d$. 
Observe that $c-\epsilon\gamma^{Q+1}d \in \L$. Comparing $g$ times Eq.~\eqref{eqp8} to the power $Q$ with Eq.~\eqref{eqp7} yields
\begin{align*}
(-c(z^r - d\epsilon\gamma^Q gz^q))^Q g &= -\gamma z^r - cgz^q  - (c-\epsilon\gamma^{Q+1} d)gz\\
-cgz^q + cd\epsilon\gamma g^{Q+1} z^r  &= -\gamma z^r - cgz^q  - (c-\epsilon\gamma^{Q+1} d)gz\\
         cd\epsilon\gamma   z^r      &= -\gamma z^r - (c-\epsilon\gamma^{Q+1} d)gz\\
       \gamma(cd\epsilon + 1) z^r    &=  - (c-\epsilon\gamma^{Q+1} d)gz.
\end{align*}
Now $cd\epsilon+1= 0$ implies $z = 0$ or $c = \epsilon\gamma^{Q+1}d$, which were handled before. 
Thus, we have
\[
z^{r-1} = -\frac{g}{\gamma}\left(\frac{c-\epsilon\gamma^{Q+1} d}{cd\epsilon+1}\right),
\]
and noting again that $\gamma^{Q+1} \in \L^\times$, we reach another contradiction since the right hand side is not a square.
\end{proof}


\section{A method to determine isotopy of biprojective pre-semifields} \label{sec_method}

There are two usual ways to determine whether two semifields are isotopic. 
The first one is to use isotopy invariants like the nuclei. Since there are 
less than $n^2$ possible configurations for the left/right and central nucleus 
for a commutative pre-semifield on $\F_p^n$, this method is not enough to 
determine whether the number of non-isotopic pre-semifields grows exponentially 
in $n$ or not. The second method works by directly considering all 
possible isotopisms $(N,L,M)$. This is, however, in many cases not feasible unless 
the pre-semifield has a very simple structure. Note that, for commutative semifields, 
some general results were obtained in~\cite{coulterhenderson} that make this approach 
slightly easier and allowed for instance to settle the isotopy question inside the 
family of Zhou-Pott semifields~\cite{ZP13}. However, the semifields in our Family 
$\Family 1$ are more delicate and such a direct approach does not seem 
possible. In this section, we develop a new general technique to determine whether 
two biprojective pre-semifields are isotopic or not. 

\subsection{Group theoretic preliminaries}

We denote the set of all autotopisms of a pre-semifield $\P$ by $\Aut(\P)$. It is 
easy to check that $\Aut(\P)$ is a group under component-wise composition, i.e., 
$(N_1,L_1,M_1) (N_2,L_2,M_2) = (N_1 N_2, L_1 L_2, M_1 M_2)$. 
We view $\Aut(\P)$ as a subgroup of 
$\GL(\F)^3 \cong \GL(\M \times \M)^3 \cong \GL(n,\F_p)^3$.
Our approach is based on the following simple and well-known result.

\begin{lemma} \label{lem:conj_groups}
Let $\P_1=(\F_p^n,+,\ast_1)$, $\P_2=(\F_p^n,+,\ast_2)$ be isotopic
pre-semifields via the isotopism $\gamma \in \GL(\F)^3$. Then 
$\gamma^{-1}\Aut(\P_2)\gamma = \Aut(\P_1)$. 
\end{lemma}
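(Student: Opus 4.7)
The plan is to verify directly from the definitions that conjugation by the isotopism $\gamma=(N,L,M)$ converts autotopisms of $\P_2$ into autotopisms of $\P_1$, and conversely. First I would write out what the three claims mean in terms of the defining multiplications: $\gamma$ satisfies $N(x\ast_1 y)=L(x)\ast_2 M(y)$, an autotopism $\alpha=(N',L',M')\in\Aut(\P_2)$ satisfies $N'(u\ast_2 v)=L'(u)\ast_2 M'(v)$, and the target conjugate is $\gamma^{-1}\alpha\gamma=(N^{-1}N'N,\,L^{-1}L'L,\,M^{-1}M'M)$, which is an autotopism of $\P_1$ if and only if
\[
(N^{-1}N'N)(x\ast_1 y)=(L^{-1}L'L)(x)\ast_1(M^{-1}M'M)(y) \quad \text{for all } x,y.
\]

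The chain of substitutions is straightforward. Using the isotopism relation on the left, $N'N(x\ast_1 y)=N'\bigl(L(x)\ast_2 M(y)\bigr)$, and the autotopism relation for $\alpha$ turns this into $L'L(x)\ast_2 M'M(y)$. Then reapplying the isotopism relation in the form $N^{-1}(L(a)\ast_2 M(b))=a\ast_1 b$ with $a=L^{-1}L'L(x)$ and $b=M^{-1}M'M(y)$ gives exactly the desired identity. This shows $\gamma^{-1}\Aut(\P_2)\gamma\subseteq\Aut(\P_1)$.

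For the reverse inclusion I would observe that $\gamma^{-1}=(N^{-1},L^{-1},M^{-1})$ is an isotopism from $\P_2$ to $\P_1$, so applying the inclusion just proved with the roles of $\P_1,\P_2$ (and of $\gamma,\gamma^{-1}$) exchanged gives $\gamma\,\Aut(\P_1)\,\gamma^{-1}\subseteq\Aut(\P_2)$, equivalently $\Aut(\P_1)\subseteq\gamma^{-1}\Aut(\P_2)\gamma$. Combining the two inclusions yields equality.

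There is no real obstacle here: the entire argument is a bookkeeping exercise in the componentwise group law on $\GL(\F)^3$. The only thing to be mindful of is not to confuse the two multiplications $\ast_1$ and $\ast_2$ when pushing the identity through, and to notice that once the $\subseteq$ direction is established, the reverse direction is automatic because isotopy is symmetric and $\gamma^{-1}$ is again an isotopism.
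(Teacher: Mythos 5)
Your proof is correct and follows essentially the same route as the paper: the same chain of substitutions (isotopism relation, autotopism relation for $\alpha$, then the isotopism relation again in inverted form) establishes $\gamma^{-1}\Aut(\P_2)\gamma\subseteq\Aut(\P_1)$, and the reverse inclusion is obtained by symmetry exactly as in the paper.
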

\begin{proof}
Let $\gamma=(N_1,L_1,M_1)\in \GL(\F)^3$ be an isotopism between $\P_1$ 
and $\P_2$ and $\delta = (N_2,L_2,M_2)\in \Aut(\P_2)$. Then 
$\gamma^{-1}\delta\gamma \in \Aut(\P_1)$. Indeed
\begin{align*}
(N_1^{-1} N_2 N_1)(x \ast_1 y ) &= (N_1^{-1}  N_2)(L_1(x) \ast_2 M_1(y)) \\
&= N_1^{-1}((L_2 L_1(x)) \ast_2 (M_2 M_1(y))) \\
&= (L_1^{-1} L_2 L_1(x)) \ast_1 (M_1^{-1}  M_2 M_1(y)),
\end{align*}
so $\gamma^{-1}\Aut(\P_2)\gamma \subseteq \Aut(\P_1)$. The other inclusion follows by symmetry. 
\end{proof}

The central idea of the technique we are going to develop is to identify 
large abelian subgroups (in particular certain Sylow subgroups), in the 
autotopism group of biprojective semifields. We then use tools from group theory 
to obtain strong constraints on when the autotopism groups of two pre-semifields 
are conjugate. This approach is inspired by a similar technique for 
inequivalences of power functions on finite fields developed by 
Dempwolff~\cite{DempwolffPower} and Yoshiara~\cite{YoshiaraPower}.\\

First we recall the well-known Sylow Theorems (see for instance \cite[Chapter 4]{Hall}).

\begin{theorem}[Sylow Theorems]
Let $G$ be a group with order $p^m s$, with $p$ prime, $m > 0$ and $p\nmid s$. Then,
\begin{enumerate}[(i)]
\item $G$ has a subgroup of order $p^m$, called a Sylow $p$-subgroup of $G$.
\item Every $p$-subgroup of $G$ is contained in a Sylow $p$-subgroup of $G$.
\item The Sylow $p$-subgroups of $G$ are conjugate in $G$.
\end{enumerate}
\end{theorem}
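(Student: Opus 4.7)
The plan is to prove the three parts in the order (i), (iii), (ii), so that the conjugacy statement is available before one embeds an arbitrary $p$-subgroup. The overall approach is classical: Wielandt's orbit-counting argument for existence, and the fixed-point trick (a $p$-group acting on a set of cardinality coprime to $p$ must have a fixed point) for both conjugacy and containment.

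For (i), I would let $\Omega$ be the collection of all subsets of $G$ of cardinality $p^m$ and let $G$ act on $\Omega$ by left multiplication. The key arithmetic input is that
\[
\binom{p^m s}{p^m} \not\equiv 0 \pmod{p},
\]
which follows from the product representation
\[
\binom{p^m s}{p^m} \;=\; s \cdot \prod_{i=1}^{p^m-1}\frac{p^m s - i}{p^m - i},
\]
since for $0 < i < p^m$ the numerator and denominator have the same $p$-adic valuation (namely $\nu_p(i)$), and $p \nmid s$. Hence $p \nmid |\Omega|$, so some orbit $G\cdot S$ has size coprime to $p$. By orbit-stabilizer, $|\mathrm{Stab}_G(S)|$ is divisible by $p^m$; on the other hand, fixing $s_0 \in S$, the coset $\mathrm{Stab}_G(S)\cdot s_0$ lies inside $S$, so $|\mathrm{Stab}_G(S)| \le |S| = p^m$. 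Thus $\mathrm{Stab}_G(S)$ is a subgroup of order $p^m$.

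For (iii), given two Sylow $p$-subgroups $P$ and $P'$, let $P'$ act on the left coset space $G/P$ by left multiplication. Every orbit of a $p$-group has $p$-power size, while $|G/P| = s$ is coprime to $p$, so there is a fixed coset $gP$, which translates into $g^{-1}P'g \subseteq P$. A cardinality comparison forces $g^{-1}P'g = P$. Part (ii) is then immediate: for an arbitrary $p$-subgroup $H \le G$, repeat the fixed-point argument with $H$ in place of $P'$ acting on $G/P$, obtaining a fixed coset $gP$, whence $H \subseteq gPg^{-1}$, which is a Sylow $p$-subgroup by (iii).

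The only real obstacle is the divisibility $p \nmid \binom{p^m s}{p^m}$; everything else is routine orbit-stabilizer bookkeeping. A slick alternative is Kummer's theorem, which expresses $\nu_p\bigl(\binom{p^m s}{p^m}\bigr)$ as the number of carries when adding $p^m$ and $p^m(s-1)$ in base $p$ (no carries occur, giving valuation $0$), but the elementary pairwise cancellation displayed above keeps the argument self-contained and entirely within the Wielandt framework.
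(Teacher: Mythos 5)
Your proof is correct: the Wielandt counting argument for existence (with the correct verification that $p\nmid\binom{p^m s}{p^m}$ via matching $p$-adic valuations of $p^m s-i$ and $p^m-i$), and the fixed-point argument of a $p$-group acting on $G/P$ for conjugacy and containment, are all sound. The paper does not prove this statement itself --- it quotes the Sylow theorems as a classical result with a reference to Hall's textbook --- so there is no internal proof to compare against; your write-up is a standard, self-contained proof of the cited theorem.
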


We will identify Sylow $p'$-subgroups of $\Aut(\P)$ when $\P$ is a biprojective
pre-semifield. In order to do that, we need to find a suitable prime $p'$, for which
we will employ Zsigmondy's Theorem (see for instance {\cite[Chapter IX., Theorem 8.3]{HuppertII}}). 

\begin{theorem}[Zsigmondy's Theorem]
For every prime $p$ and $m > 2$ except when $(p,m)=(2,6)$, there exists a 
\textbf{$p$-primitive divisor} $p'$ of $p^m-1$, that is, $p'$ prime, $p'|p^m-1$ 
and $p'\nmid p^i-1$ for all $1 \le i \le m-1$.
\end{theorem}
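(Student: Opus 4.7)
My plan is to use the theory of cyclotomic polynomials. Recall that the $m$-th cyclotomic polynomial
\[
\Phi_m(x) = \prod_{\zeta}(x-\zeta),
\]
where $\zeta$ ranges over the primitive complex $m$-th roots of unity, is a monic integer polynomial, and one has the factorization $p^m-1 = \prod_{d\mid m}\Phi_d(p)$. A prime $p'$ is a $p$-primitive divisor of $p^m-1$ precisely when the multiplicative order of $p$ modulo $p'$ equals $m$, which in particular forces $p'\mid\Phi_m(p)$. So the goal reduces to producing a prime factor of $\Phi_m(p)$ whose order is exactly $m$.

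First I would prove the standard lemma: if a prime $\ell$ divides $\Phi_m(p)$ and $\ell\nmid m$, then $\ell$ is a $p$-primitive divisor of $p^m-1$. The contrapositive uses that a non-primitive such $\ell$ would have multiplicative order $d\mid m$ with $d<m$; comparing $\ell$-adic valuations in the factorization $p^m-1 = \prod_{d\mid m}\Phi_d(p)$ via a lifting-the-exponent argument then forces $\ell\mid m$. Complementing this, I would show that any non-primitive prime divisor $\ell$ of $\Phi_m(p)$ must in fact be the largest prime divisor of $m$ and appears with multiplicity exactly one in $\Phi_m(p)$. Together these give the key reduction: if $\Phi_m(p)>m$, then $\Phi_m(p)$ must contain a prime factor coprime to $m$, which is automatically a $p$-primitive divisor of $p^m-1$.

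The remaining task is therefore to establish $\Phi_m(p)>m$ for all $m>2$ and all primes $p$, save for the single exception $(p,m)=(2,6)$ where $\Phi_6(2)=3<6$. The basic estimate $\Phi_m(p) = \prod_{\zeta}|p-\zeta|\ge (p-1)^{\phi(m)}$ already suffices when $p\ge 3$: it gives $\Phi_m(p)\ge 2^{\phi(m)}$, which beats $m$ for all $m>2$ with only a handful of very small cases to verify directly. For $p=2$ the trivial bound collapses, so I would refine it by pairing complex conjugate roots via $(p-\zeta)(p-\bar\zeta) = p^2 - 2p\cos\theta + 1$ to obtain a sharper explicit lower bound on $\Phi_m(2)$; this reduces the problem to the small values $m\in\{3,4,5,6,7\}$, which are checked by direct computation, and it is precisely here that $(p,m)=(2,6)$ emerges as the unique exception.

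The main obstacle will be the ``at most one power'' claim, namely that $v_\ell(\Phi_m(p))\le 1$ whenever $\ell\mid m$ is a non-primitive prime divisor. This is a delicate $\ell$-adic valuation calculation relating $v_\ell(\Phi_m(p))$ to $v_\ell(p^{m/\ell^a}-1)$ via the lifting-the-exponent lemma, with a separate argument in the special case $\ell=2$ where $p$ is odd. Once this refined counting is in hand, everything else reduces to the explicit numerical estimates above and a small-case verification.
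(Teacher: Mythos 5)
The paper does not prove this statement at all: Zsigmondy's Theorem is quoted as a classical result with a pointer to Huppert--Blackburn (Chapter IX, Theorem 8.3), so there is no in-paper argument to compare against. Your sketch is the standard Bang/Zsigmondy proof via cyclotomic polynomials, and its skeleton is sound: the factorization $p^m-1=\prod_{d\mid m}\Phi_d(p)$, the lemma that a prime divisor of $\Phi_m(p)$ not dividing $m$ is automatically primitive (via the double-root/derivative or LTE argument), the refinement that a non-primitive prime divisor must be the largest prime factor $\ell$ of $m$ and divides $\Phi_m(p)$ only to the first power (using $m>2$ to dispose of the $\ell=2$ subtlety), and the resulting reduction to the size estimate $\Phi_m(p)>m$ (in fact $\Phi_m(p)>\ell$ suffices, which is slightly cheaper). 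The estimates also check out: $(p-1)^{\phi(m)}\ge 2^{\phi(m)}$ beats $m$ for $p\ge 3$ except at $m\in\{4,6\}$, where $p^2+1$ and $p^2-p+1$ are verified directly. The only place where the sketch is genuinely thin is the case $p=2$, where the trivial bound collapses and your conjugate-pairing estimate $(2-\zeta)(2-\bar\zeta)=5-4\cos\theta$ needs an explicit uniform lower bound on the product over primitive roots to pin down the exact finite set of residual cases; this is routine but is the one step that must be written out carefully, and it is exactly where $\Phi_6(2)=3$ surfaces as the unique exception. With those details supplied, your argument is a complete, self-contained proof of the statement the paper merely cites.
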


We write $\f{p}$-linear mappings $L\in \End(\F)$ from $\F$ to itself as 
$2 \times 2$ matrices of $\f{p}$-linear mappings  in $\End(\M)$.
That is,
\[
L=\begin{pmatrix}
	L_1 & L_2 \\
	L_3 & L_4
\end{pmatrix}, \textrm{ for } L_i \in \End(\M). 
\]
Set 
\[
\gamma_a=(N_a,L_a,M_a)\in \GL(\F)^3 \quad \textrm{ with } 
N_a=\begin{pmatrix}
	m_{a^{q+1}} & 0 \\
	0 & m_{a^{r+1}}
\end{pmatrix}, \quad 
L_a=M_a=\begin{pmatrix}
	m_{a} & 0 \\
	0 & m_{a}
\end{pmatrix}, 
\]
where $m_a$ denotes  multiplication with the finite field element $a \in \M^\times$. 
For simplicity, we write these diagonal matrices also in the form $\diag(m_{a},m_{a})$, 
so 
\[
\gamma_a = (\diag(m_{a^{q+1}},m_{a^{r+1}}),\diag(m_{a},m_{a}),\diag(m_{a},m_{a})).
\]
An important fact which follows immediately from biprojectivity is that 
$\gamma_a \in \Aut(\P)$ for all $a \in \M^\times$ when $\P$ is a $(q,r)$-biprojective 
pre-semifield, which can be readily verified using Eq.~\eqref{eq_polarization}.
We fix some further notation that we will use from now on:
\begin{notation}
\begin{itemize}
\item[]
\item Set $q=p^k$ and $r=p^l$.
\item Set $\overline{q}=p^{m-k}$ and $\overline{r}=p^{m-l}$, that is $q\overline{q}\equiv r\overline{r} \equiv 1 \pmod{p^m-1}$.
\item Define the cyclic group 
\[
	Z^{(q,r)} = \{ \gamma_a : a \in \M^\times\},
\] of order $p^m-1$. 
\item Let $p'$ be a $p$-primitive divisor of $p^m-1$. Such a prime $p'$ always exists if 
$m > 2$ and $(p,m) \neq (2,6)$ by Zsigmondy's Theorem. In our case, we have $p > 2$. We will 
also stipulate $m > 2$. Note that $p' \neq 2$ since $p'\nmid p-1$.
\item Let $R$ be the unique Sylow $p'$-subgroup of $\M^\times$.
\item Define
\[
Z_R^{(q,r)} = \{ \gamma_a \colon a \in R\},
\]
which is the unique Sylow $p'$-subgroup of $Z^{(q,r)}$ with $|R|$ elements. 
\item For a $(q,r)$-biprojective pre-semifield $\P$, denote by 
\[
C_{\P} = C_{\Aut(\P)}(Z_R^{(q,r)}), 
\]
the centralizer of $Z_R^{(q,r)}$ in $\Aut(\P)$.
\item Define
\[
	S = \{\diag(m_a,m_a) \colon a \in \M^\times\},
\]
and
\[
	S_R = \{\diag(m_a,m_a) \colon a \in R\}.
\]
\end{itemize}
\end{notation}

We start by identifying a subgroup of the autotopism group of any 
$(q,r)$-biprojective pre-semifield. The following lemma is straightforward, 
but very important for our paper.
\begin{lemma}\label{lem_first}
Let $\P$ be any $(q,r)$-biprojective pre-semifield. Then 
\[
Z_R^{(q,r)} \le Z^{(q,r)} \le \Aut(\P).
\]
\end{lemma}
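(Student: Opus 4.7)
The statement contains two inclusions, of which the first is immediate from the definitions. The plan is to observe that $R\subseteq \M^\times$, hence $Z_R^{(q,r)}\subseteq Z^{(q,r)}$ by construction (and both are subgroups of $\GL(\F)^3$ under componentwise composition, so this is a subgroup inclusion).

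The substantive content is the second inclusion, $Z^{(q,r)}\le \Aut(\P)$. The key observation is that for each fixed $a\in \M^\times$, both components of the polarization formula \eqref{eq_polarization} are homogeneous in the joint $\M$-scaling of $(x,y)$ and $(u,v)$: the first component is a sum of monomials of the shape $u\,x^q$, $u^q x$, $u\,y^q$, $u^q y$, $v\,x^q$, $v^q x$, $v\,y^q$, $v^q y$, every one of which acquires the factor $a^{q+1}$ when the inputs $x,y,u,v$ are all multiplied by $a\in \M^\times$; analogously, every monomial of the second component acquires the factor $a^{r+1}$.

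So the plan of proof is simply to substitute $L_a(x,y)=(ax,ay)$ and $M_a(u,v)=(au,av)$ into \eqref{eq_polarization} and pull out the common factor $a^{q+1}$ from the first component and $a^{r+1}$ from the second component, yielding
\[
L_a(x,y)\ast M_a(u,v) \;=\; \bigl(a^{q+1},a^{r+1}\bigr)\cdot\bigl((x,y)\ast (u,v)\bigr)\;=\;N_a\bigl((x,y)\ast(u,v)\bigr),
\]
which is precisely the autotopism condition for $\gamma_a=(N_a,L_a,M_a)$. Since the map $a\mapsto\gamma_a$ is an injective homomorphism from $(\M^\times,\cdot)$ into $\GL(\F)^3$ (componentwise, diagonal matrices of multiplications multiply entrywise), its image $Z^{(q,r)}$ is a subgroup of $\Aut(\P)$.

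There is no genuine obstacle here: biprojectivity was designed so that multiplication by a scalar acts in the natural homogeneous way on each component, and this lemma is just a bookkeeping consequence. The only care needed is that the exponents $q+1$ and $r+1$ are read off correctly from the two components, and that $N_a$ is indeed built from these two scalings (which matches the definition of $\gamma_a$ given just before the lemma).
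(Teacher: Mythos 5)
Your proof is correct and is exactly the argument the paper intends: the paper's proof is the one-line ``Follows directly from Eq.~\eqref{eq_polarization},'' and your computation (each monomial of the first component is bidegree $(q,1)$ in the pair of inputs, hence scales by $a^{q+1}$, and likewise $a^{r+1}$ for the second component) is precisely the verification being left to the reader. The additional observations that $a\mapsto\gamma_a$ is a homomorphism and that $R\subseteq\M^\times$ gives the first inclusion are the right bookkeeping.
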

\begin{proof}
Follows directly from Eq.~\eqref{eq_polarization}.
\end{proof}

We continue with a simple observation on $R$.
\begin{lemma} \label{lem:irredm}
	Let $a\in R$, $a \neq 1$. Then $a$ is not contained in a proper subfield of $\M$.
\end{lemma}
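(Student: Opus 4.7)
The plan is to argue by contradiction, exploiting the defining property of the $p$-primitive divisor $p'$. Suppose $a \in R$, $a \neq 1$, is contained in a proper subfield $\f{p^i}$ of $\M = \f{p^m}$ where $i \mid m$ and $1 \le i < m$. Then the multiplicative order of $a$ divides $|\f{p^i}^\times| = p^i - 1$.

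On the other hand, since $a$ lies in the Sylow $p'$-subgroup $R$ of $\M^\times$ and $a \neq 1$, the order of $a$ is a nontrivial power of $p'$; in particular $p' \mid \mathrm{ord}(a)$. Combined with the previous step, this gives $p' \mid p^i - 1$ for some $i$ with $1 \le i \le m-1$, directly contradicting the definition of a $p$-primitive divisor of $p^m - 1$.

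The only subtlety is that we must check $R$ is nontrivial, so that elements $a \neq 1$ exist and the statement is non-vacuous (and actually being invoked properly elsewhere), but this is immediate: by Zsigmondy's Theorem $p'$ exists under the standing hypotheses $p > 2$ and $m > 2$, and $p' \mid p^m - 1 = |\M^\times|$, so $R$ is a nontrivial $p'$-group. There is no real obstacle here; the lemma is essentially just a restatement of the primitive divisor property in the language of subfields, and the proof is a two-line divisibility argument.
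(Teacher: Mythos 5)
Your proof is correct and is essentially the same as the paper's: both reduce membership in $\f{p^i}$ to the condition that the multiplicative order of $a$ (a nontrivial power of $p'$) divides $p^i-1$, and then invoke the defining property of the $p$-primitive divisor $p'$. The added remark about $R$ being nontrivial is fine but not needed for the lemma itself.
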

\begin{proof}
	Clearly, $a$ is contained in the subfield $\F_{p^i}$ if and only if $a^{p^i-1}=1$, i.e. the multiplicative order of $a$ is a divisor of $p^i-1$. The order of $a$ is a power of $p'$ and $p'$ does not divide $p^i-1$ for any $i<m$, so $a$ is not contained in a proper subfield of $\F_{p^m}$.
\end{proof}

Now we observe that the normalizer and the centralizer of certain subgroups
of $\GL(\F)$ must have certain shape.

\begin{lemma} \label{lem:yoshiara}
 Let $N_{\GL(\F)}(S_R)$, $N_{\GL(\F)}(S)$ and $C_{\GL(\F)}(S_R)$, $C_{\GL(\F)}(S)$ be the normalizers and the centralizers of 
 $S_R$ and $S$ in $\GL(\F)$. Then
	\begin{enumerate}[(a)]
		\item $N_{\GL(\F)}(S_R)=N_{\GL(\F)}(S) = \left\{\begin{pmatrix}
			m_{c_1} \tau & m_{c_2} \tau \\
			m_{c_3} \tau&m_{c_4} \tau
		\end{pmatrix} \colon c_1,c_2,c_3,c_4 \in \M, \tau \in \Gal(\M/\F_p)\right\}\cap \GL(\F)$,
		\item $C_{\GL(\F)}(S_R)=C_{\GL(\F)}(S) = \left\{\begin{pmatrix}
			m_{c_1}  & m_{c_2}  \\
			m_{c_3}&m_{c_4} 
		\end{pmatrix} \colon c_1,c_2,c_3,c_4 \in \M \right\}\cap \GL(\F)$.
	\end{enumerate}
\end{lemma}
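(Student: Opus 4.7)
The plan is to reduce both assertions to a coefficient analysis inside $\End(\M)$. A matrix $L=\begin{pmatrix}L_1&L_2\\L_3&L_4\end{pmatrix}$ centralizes (resp.\ normalizes) $S_R$ if and only if each block $L_i$ satisfies $L_i m_a=m_a L_i$ for all $a\in R$ (resp.\ $L_i m_a=m_{\sigma(a)}L_i$ for some common map $\sigma\colon R\to R$, independent of $i$). Writing $L_i=\sum_{j=0}^{m-1} b_{i,j}\phi^j$ as a linearized polynomial in the Frobenius $\phi\colon x\mapsto x^p$, and using that the $\phi^j$ are linearly independent over $\M$, these two conditions become
\[
b_{i,j}\bigl(a^{p^j}-a\bigr)=0 \qquad\text{and}\qquad b_{i,j}\bigl(a^{p^j}-\sigma(a)\bigr)=0,
\]
respectively, for every $j\in\{0,\dots,m-1\}$ and every $a\in R$.

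For part (b), suppose $b_{i,j}\neq 0$ for some $j\geq 1$. Then $a^{p^j}=a$ for all $a\in R$, so $R\subseteq \f{p^j}\cap\M$, a proper subfield of $\M$. Since $R$ is nontrivial (as $p'$ divides $p^m-1$), this contradicts Lemma~\ref{lem:irredm}. Hence $L_i=m_{c_i}$ for some $c_i\in\M$, which is the claimed description of $C_{\GL(\F)}(S_R)$. The equality $C_{\GL(\F)}(S)=C_{\GL(\F)}(S_R)$ then follows: the inclusion $\subseteq$ is trivial, while the reverse holds because any $m_c$ commutes with every $m_a$ with $a\in\M^\times$.

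For part (a), the same comparison shows that if $b_{i,j_1}\neq 0\neq b_{i,j_2}$ with $j_1\neq j_2$, then $a^{p^{j_1}}=\sigma(a)=a^{p^{j_2}}$ for every $a\in R$, so $a^{p^{|j_1-j_2|}-1}=1$ for all $a\in R$ (after cancelling the $p$-power exponent, which is coprime to $|R|$); by Lemma~\ref{lem:irredm} this forces $m\mid|j_1-j_2|$, contradicting $j_1,j_2\in\{0,\dots,m-1\}$ and $j_1\ne j_2$. Hence each nonzero $L_i$ has the form $m_{c_i}\phi^{j_i}$. Requiring the single conjugation map $\sigma(a)=a^{p^{j_i}}$ to be the same across all nonzero blocks forces the exponents $j_i$ to coincide; setting $\tau=\phi^{j}$ for this common value yields the block form claimed. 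The equality of the two normalizers then follows from two observations: first, $S_R$ is the unique Sylow $p'$-subgroup of the cyclic group $S\cong\M^\times$, hence characteristic in $S$, so $N_{\GL(\F)}(S)\subseteq N_{\GL(\F)}(S_R)$; second, the explicit description already obtained, combined with the identity $\tau m_a=m_{\tau(a)}\tau$, shows directly that every such matrix also normalizes $S$.

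The main (and really the only nontrivial) obstacle is the coefficient-by-coefficient argument; it succeeds precisely because $p'$ is a $p$-primitive divisor of $p^m-1$, which (via Lemma~\ref{lem:irredm}) eliminates every nonzero Frobenius component in the centralizer case and forces a single common Frobenius component across all blocks in the normalizer case.
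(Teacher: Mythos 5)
Your proof is correct and follows essentially the same route as the paper's: block decomposition, writing each block as a linearized polynomial, coefficient comparison, and invoking Lemma~\ref{lem:irredm} to force monomials of a common Frobenius degree. You are in fact slightly more complete than the paper in justifying the equality $N_{\GL(\F)}(S_R)=N_{\GL(\F)}(S)$ (via $S_R$ being characteristic in $S$) and the reverse inclusions, which the paper leaves implicit.
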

 \begin{proof}
	We present the proof only for the more delicate case $S_R$. The proof for $S$ is identical with $\M^\times$ substituting $R$ throughout.

	Let $\begin{pmatrix}
		A_1 & A_2 \\
		A_3 & A_4
	\end{pmatrix} \in N_{\GL(\F)}(S_R) $, where $A_1,A_2,A_3,A_4 \in \End(\M)$. Then  
	\[\begin{pmatrix}
		A_1 & A_2 \\
		A_3 & A_4
	\end{pmatrix} \diag(m_a,m_a) = \diag(m_b,m_b)\begin{pmatrix}
		A_1 & A_2 \\
		A_3 & A_4
	\end{pmatrix}, \]
	for all $a \in R$ and some $b=\pi(a)$ where $\pi \colon R \rightarrow R$ is a bijection. 
	Simple matrix multiplication implies $A_i(ax)=bA_i(x)$ for $i \in \{1,2,3,4\}$.

   We now write the mappings as linearized polynomials, i.e. $A_i=\sum_{j=0}^{m-1} c_{j,i}x^{p^j}$ for $i \in \{1,2,3,4\}$. The equations above then immediately yield
	\[\sum_{j=0}^{m-1} c_{j,i}a^{p^j}x^{p^j} = b\sum_{j=0}^{m-1} c_{j,i}x^{p^j}\]
	for $i \in \{1,2,3,4\}$ and all $a \in R$.
	We now compare the coefficients of these polynomials. If $a\neq 1$, it is not contained in any proper subfields of $\M$ by Lemma~\ref{lem:irredm}, so we have $b=a^{p^j}$ only for at most one $j \in \{0,1,\dots,m-1\}$. So $A_1,A_2,A_3,A_4$ are zero or monomials of the same degree $p^j$, which proves the statement for the normalizer. In the case of the centralizer, we have $b=a$, so we get the same possible mappings except with $j=0$ forced.
	\end{proof}

We have shown in Lemma \ref{lem_first} that $Z_R^{(q,r)}$ is a subgroup of $\Aut(\P)$.
Now we show that, under a certain condition which is key to our proofs, it is not just a
Sylow $p'$-subgroup of $Z^{(q,r)}$, but even a Sylow $p'$-subgroup of $\Aut(\P)$.

\begin{lemma}\label{lem:zpsylow}
Let $\P$  be a $(q,r)$-biprojective pre-semifield. Assume that 
$C_{\P}$ contains $Z^{(q,r)}$ as an index $I$ subgroup such that $p'$ does not divide $I$. 
Then $Z_R^{(q,r)}$ is a Sylow $p'$-subgroup of $\Aut(\P)$.
\end{lemma}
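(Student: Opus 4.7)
The plan is to show $|\Aut(\P)|_{p'} = |R|$, which combined with $Z_R^{(q,r)} \le \Aut(\P)$ (Lemma~\ref{lem_first}) and $|Z_R^{(q,r)}| = |R|$ yields the claim. First, the hypothesis $[C_{\P} : Z^{(q,r)}] = I$ with $p' \nmid I$ gives immediately $|C_{\P}|_{p'} = (p^m-1)_{p'} = |R|$, so $Z_R^{(q,r)}$ is a Sylow $p'$-subgroup of $C_{\P}$; since $Z_R^{(q,r)}$ is central in $C_{\P}$, it is the unique one.

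The key step is to bound the index $[N : C_{\P}]$, where $N := N_{\Aut(\P)}(Z_R^{(q,r)})$. The conjugation action of $N$ on $Z_R^{(q,r)}$ has kernel $C_{\P}$ and induces an embedding $N/C_{\P} \hookrightarrow \Aut(Z_R^{(q,r)}) \cong \Aut(R)$. For $g = (N_g, L_g, M_g) \in N$, conjugation by $L_g$ preserves $S_R$, so $L_g \in N_{\GL(\F)}(S_R)$. By Lemma~\ref{lem:yoshiara}(a), $L_g$ has the form $\bigl(\begin{smallmatrix} m_{c_1}\tau & m_{c_2}\tau \\ m_{c_3}\tau & m_{c_4}\tau \end{smallmatrix}\bigr)$ for some $\tau \in \Gal(\M/\f{p})$, and a direct calculation on $(x,y) \in \M \times \M$ gives $L_g \diag(m_a, m_a) L_g^{-1} = \diag(m_{\tau(a)}, m_{\tau(a)})$. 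Matching the middle components of $g \gamma_a g^{-1}$ forces the action of $g$ on $Z_R^{(q,r)}$ to be $\gamma_a \mapsto \gamma_{\tau(a)}$, so $N/C_{\P}$ embeds into the image of $\Gal(\M/\f{p}) \to \Aut(R)$. The Frobenius $x \mapsto x^p$ has order exactly $m$ on $R$ (since $|R|$ is a power of $p'$ and $p' \mid p^d - 1$ iff $m \mid d$ by primitivity), so this image has order $m$ and $[N : C_{\P}] \mid m$.

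To conclude, the order of $p$ modulo $p'$ is $m$ and divides $p'-1$ by Fermat, so $p' > m$ and $p' \nmid [N : C_{\P}]$. Thus $|N|_{p'} = |C_{\P}|_{p'} = |R|$, and $Z_R^{(q,r)}$, being normal in $N$ of this order, is the unique Sylow $p'$-subgroup of $N$. If a $p'$-subgroup $P$ of $\Aut(\P)$ were to strictly contain $Z_R^{(q,r)}$, the classical fact that a proper subgroup of a finite $\ell$-group has strictly larger normalizer inside it (applied with $\ell = p'$) would produce $g \in N_P(Z_R^{(q,r)}) \setminus Z_R^{(q,r)} \subseteq N$ of $p'$-power order, contradicting the uniqueness just established. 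Hence $Z_R^{(q,r)}$ is maximal among $p'$-subgroups of $\Aut(\P)$ and thus a Sylow $p'$-subgroup. I expect the main obstacle to be the middle paragraph: extracting from Lemma~\ref{lem:yoshiara} the fact that any autotopism normalizing $Z_R^{(q,r)}$ must act on it through a Galois automorphism, which is what forces $[N : C_{\P}] \mid m$ and enables the Zsigmondy-style arithmetic to close out the proof.
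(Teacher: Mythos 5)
Your proof is correct, but it takes a genuinely different route from the paper's. The paper argues globally inside the ambient group: it computes the $p'$-part of $|\GL(\F)^3|$ (primitivity of $p'$ ensures only the factors $p^m-1$ and $p^{2m}-1$ contribute), exhibits the abelian group $U$ of triples of diagonal multiplication maps with entries in $R$ as a Sylow $p'$-subgroup of $\GL(\F)^3$, and concludes that every Sylow $p'$-subgroup of $\GL(\F)^3$ --- hence any Sylow $p'$-subgroup $T$ of $\Aut(\P)$ containing $Z_R^{(q,r)}$ --- is abelian; abelianness forces $T\le C_{\P}$ at once, and the index hypothesis then gives $T=Z_R^{(q,r)}$. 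You instead run a local, Burnside-style argument: you control $N=N_{\Aut(\P)}(Z_R^{(q,r)})$ via Lemma~\ref{lem:yoshiara}(a), showing $N/C_{\P}$ embeds into the image of $\Gal(\M/\f{p})$ in $\Aut(R)$ so that $[N:C_{\P}]\mid m$, use $m\mid p'-1$ (hence $p'>m$) to get $|N|_{p'}=|R|$, and close with the ``normalizers grow'' property of finite $p'$-groups. All the steps check out: $L_g$ does normalize $S_R$ because the middle component of $g\gamma_ag^{-1}=\gamma_b$ reads $L_g\diag(m_a,m_a)L_g^{-1}=\diag(m_b,m_b)$, and the Frobenius has order exactly $m$ on $R$ since $R$ contains an element of order $p'$ and $p'\nmid p^d-1$ for $1\le d<m$. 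Both proofs lean on the primitivity of $p'$, but in different places (the order of $\GL(2m,\F_p)$ versus the order of Frobenius on $R$ together with the bound $p'>m$). The paper's version is shorter because the ambient Sylow subgroup happens to be abelian, which kills the normalizer analysis entirely; yours does not need that accident, reuses Lemma~\ref{lem:yoshiara} (which the paper proves anyway for Theorem~\ref{thm:equivalence}), and would survive in settings where the ambient Sylow $p'$-subgroups fail to be abelian.
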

\noindent\begin{minipage}{0.85\textwidth}
\begin{proof}
First define 
\[
	U = \{(\diag(m_a,m_b),\diag(m_c,m_d),\diag(m_e,m_f)) \colon a,b,c,d,e,f \in R\}. 
\]
Clearly, $|U|=|R|^6$. We will now show that $U$ is a Sylow $p'$-subgroup of $\GL(\F)^3$. 
We have 
\[
	|\GL(\F)| = |\GL(2m,\F_p)| = p^{m(2m-1)} \prod_{i=1}^{2m}(p^i-1). 
\]
Clearly, $p^{m+i}-1 \equiv p^i-1 \pmod{p^m-1}$. As $p'$ is a $p$-primitive divisor 
of $p^m-1$, all integers $p^j-1$ with $j$ in $[1,2m]$ are coprime to $p'$, except 
for $j \in \{m,2m\}$. Furthermore, the $p'$-part of $p^{2m}-1 = (p^{m}-1)(p^m+1)$ 
is $|R|$ since $p'\neq 2$. Thus the $p'$-part of 
$|\GL(\F)|$ is $|R|^2$ and $U$ is a Sylow $p'$-subgroup of $\GL(\F)^3$ as claimed. 
All Sylow $p'$-subgroups of $\GL(\F)^3$ are abelian since $U$ is abelian,
by Sylow Theorem (iii).	
Note that any $p'$-subgroup of a group $G$ is contained in a Sylow $p'$-subgroup 
of $G$ by Sylow Theorem (ii).
Let $T$ be a Sylow $p'$-subgroup of $\Aut(\P)$ that contains the 
$p'$-group $Z_R^{(q,r)}$. Then $T$ itself is (again by Sylow Theorem (ii)) contained in
a Sylow $p'$-subgroup of $\GL(\F)^3$, say $U'$. In particular, $T$ is abelian. 
This implies that $T$ is a subgroup of the centralizer 
$C_{\P}$ of $Z_R^{(q,r)}$ in $\Aut(\P)$. By assumption, $Z^{(q,r)}$ is an index 
$I$ subgroup of $C_{\P}$ and $p'$ does not divide $I$.
Moreover $Z_R^{(q,r)}$ is a Sylow $p'$-subgroup of $Z^{(q,r)}$ and therefore
$p'\nmid [Z^{(q,r)}:Z_R^{(q,r)}] = I_1$. Let $[T:Z_R^{(q,r)}] = I_2 = p'^h$ for $h \ge 0$, 
since both are $p'$-groups. Since $I_2|I_1 I$,
and $p'\nmid I_1 I$, we must have $p'\nmid I_2$ and $I_2 = 1$. Thus,
$Z_R^{(q,r)}=T$ and $Z_R^{(q,r)}$ is a Sylow $p'$-subgroup of $\Aut(\P)$.
\end{proof}
\end{minipage}
\begin{minipage}{0.15\textwidth}
\begin{tikzpicture}
    \node (Q1) at (2,0) {$Z_R^{(q,r)}$};
    \node (Q2) at (2,2) {$Z^{(q,r)}$};
    \node (Q3) at (2,4) {$C_\P$};
    \node (Q4) at (2,6) {$\Aut(\P)$};
    \node (Q5) at (0,2) {$T$};
    \node (Q6) at (0,6) {$U'$};
    \node (Q7) at (2,8) {$\GL(\F)^3$};

    \draw (Q1)--(Q2) node[pos=0.5, right, inner sep=0.25cm] {$I_1$}; ;
    \draw (Q2)--(Q3) node[pos=0.5, right, inner sep=0.25cm] {$I$}; 
    \draw (Q3)--(Q4);
    \draw (Q3)--(Q5);
    \draw[dotted] (Q1)--(Q5) node[pos=0.7, below, inner sep=0.25cm] {$I_2$};
    \draw (Q4)--(Q7); 
    \draw (Q6)--(Q7); 
    \draw (Q5)--(Q6); 
\end{tikzpicture}
\end{minipage}

\subsection{A theorem on isotopisms between biprojective semifields}

Now we are going to show that if two biprojective pre-semifields are isotopic, an isotopism $(N,L,M)$ 
that satisfies strong requirements on the shape of the linearized polynomials $N,L$ and $M$ has to exist,
whenever the condition appearing in the assumption of Lemma \ref{lem:zpsylow} is satisfied. 
We will name it Condition \eqref{eq:condition}. First we need a simple lemma.

\begin{lemma} \label{lem:a0condition}
Let $(x,y) \mapsto F(x,y) = (f(x,y),g(x,y)) = ((a_0,b_0,c_0,d_0)_q,(a_1,b_1,c_1,d_1)_r)$ 
be a $(q,r)$-biprojective mapping for arbitrary values of $q,r$. If $F$ is planar, 
then $(a_0,a_1)\neq (0,0)$ and $(d_0,d_1)\neq (0,0)$. That is to say, all biprojective 
pre-semifield polarizations $\Delta_F((x,y),(u,v))$ have a component 
that contains both monomials $x^\sigma u$ and $xu^\sigma$ and 
a component (not necessarily different) that contains both $y^\tau v$ and $yv^\tau$ for 
$\sigma,\tau \in \{q,r\}$ depending on the component. 
\end{lemma}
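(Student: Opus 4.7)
The plan is to apply the planarity criterion of Lemma \ref{lem_PN} to the two degenerate values $u=0$ and $u=\infty$ of the projective parameter and show that if either $(a_0,a_1)$ or $(d_0,d_1)$ vanishes, then the corresponding pair $\DE{f}{u}=0=\DE{g}{u}$ degenerates into a pair of polynomials in a single variable, producing $|\M|>1$ solutions and contradicting uniqueness.

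First I would write down, directly from the definitions preceding Lemma \ref{lem_PN},
\begin{align*}
\DE{f}{0}(x,y)       &= b_0 x^q + c_0 x + d_0 y^q + d_0 y, & \DE{g}{0}(x,y)       &= b_1 x^r + c_1 x + d_1 y^r + d_1 y,\\
\DE{f}{\infty}(x,y)  &= a_0 x^q + a_0 x + c_0 y^q + b_0 y, & \DE{g}{\infty}(x,y)  &= a_1 x^r + a_1 x + c_1 y^r + b_1 y.
\end{align*}
Assume for contradiction that $(d_0,d_1)=(0,0)$. Then $\DE{f}{0}$ and $\DE{g}{0}$ depend only on $x$, so every pair of the form $(0,y)$ with $y\in\M$ is a common zero. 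This yields at least $|\M|\ge p>1$ solutions for the parameter $u=0$, contradicting Lemma~\ref{lem_PN}. The symmetric assumption $(a_0,a_1)=(0,0)$ makes $\DE{f}{\infty}$ and $\DE{g}{\infty}$ depend only on $y$, so every $(x,0)$ is a solution at $u=\infty$, again contradicting Lemma~\ref{lem_PN}. This establishes both non-vanishing conditions.

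Finally, I would translate these back into the language of the polarization via Eq.~\eqref{eq_polarization}. In the first component of $(x,y)\ast(u,v)$ the monomials $x^q u$ and $xu^q$ both occur with coefficient $a_0$, and in the second component $x^r u$ and $xu^r$ both occur with coefficient $a_1$; so $(a_0,a_1)\neq(0,0)$ is exactly the statement that some component contains both $x^\sigma u$ and $xu^\sigma$ for some $\sigma\in\{q,r\}$. An identical reading of the $y^q v, yv^q$ (coefficient $d_0$) and $y^r v, yv^r$ (coefficient $d_1$) terms yields the analogous conclusion for $(d_0,d_1)$.

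There is no real obstacle here; the only thing to be careful about is reading off which degenerate value of $u$ kills which pair of coefficients, and matching the two formal statements in the lemma to the two choices $u\in\{0,\infty\}$. The substance of the argument is just that a biprojective polarization which is missing all ``pure $x,u$'' or all ``pure $y,v$'' bilinear terms admits an entire line of zeros in its polarization, and hence cannot be planar.
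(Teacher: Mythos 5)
Your proof is correct and follows essentially the same route as the paper: evaluate the planarity criterion of Lemma~\ref{lem_PN} at the degenerate parameters $u=\infty$ (to force $(a_0,a_1)\neq(0,0)$ via the line $y=0$) and $u=0$ (to force $(d_0,d_1)\neq(0,0)$ via the line $x=0$), then read the monomial statement off Eq.~\eqref{eq_polarization}. The only difference is that you spell out the line of common zeros slightly more explicitly than the paper does.
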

\begin{proof}
If $a_0 = a_1 = 0$, then $\DE{f}{\infty}(x,y)=0=\DE{g}{\infty}(x,y)$ for $y=0$ and arbitrary $x$, contradicting Lemma~\ref{lem_PN} for $u=\infty$. The contradiction for $d_0,d_1$ is obtained similarly by considering Lemma~\ref{lem_PN} for $u=0$. The statement on the corresponding pre-semifield follows immediately by Eq.~\eqref{eq_polarization} and the fact that $(a_0,a_1)\neq (0,0)$.
\end{proof}

We are now ready to prove the main result of this section: If two biprojective 
semifields are isotopic, then there exists an isotopism between them of a very specific form.

\begin{theorem} \label{thm:equivalence}
Let $\P_1=(\M \times \M,+,\ast_1)$ and $\P_2=(\M \times \M,+,\ast_2)$ be $(q_1,r_1)$- and  
$(q_2,r_2)$-biprojective pre-semifields, respectively, such that 
$q_1 \not\in \{ r_1, \overline{r_1} \}$, $1 \notin \{q_1,r_1\}$ and $Q \not\in \{q_1,r_1\}$, where
$q_i = p^{k_i}$ and $r_i = p^{l_i}$ for $i \in \{1,2\}$.
Assume that 
	\begin{equation}
		C_{\P_1} \text{ contains }Z^{(q_1,r_1)} \text{ as an index } I \text{ subgroup such that }p'\text{ does not divide }I.
	\tag{C}\label{eq:condition}
	\end{equation}
If $\P_1,\P_2$ are 
 isotopic, then there exists an 
 isotopism $\gamma=(N,L,M) \in \GL(\F)^3$, with the following properties:
\begin{itemize}
	\item All non-zero subfunctions of $N,L$ and $M$ are monomials.
	\item All non-zero subfunctions of $L$ and $M$ have the same degree $p^t$.
	\item We have,
	\begin{itemize} 
		\item either $k_1 \equiv \pm k_2 \pmod{m}$ and $l_1 \equiv \pm l_2 \pmod{m}$,
		\item or $k_1 \equiv \pm l_2 \pmod{m}$ and $l_1 \equiv \pm k_2 \pmod{m}$.
	\end{itemize}
\end{itemize}
More precisely, we have either,
\begin{itemize}
\item $N_2=N_3=0$ and $N_1,N_4 \neq 0$,
\item $k_1 \equiv \pm k_2 \pmod{m}$ and $l_1 \equiv \pm l_2 \pmod{m}$,
\item if $k_1 \equiv  k_2 \pmod{m}$ (resp. $l_1 \equiv  l_2 \pmod{m}$) then $N_1$ (resp. $N_4$) is a monomial of degree $p^t$,
\item if $k_1 \equiv  -k_2 \pmod{m}$ (resp. $l_1 \equiv  -l_2 \pmod{m}$) then $N_1$ (resp. $N_4$) is a monomial of degree $p^{t+k_2}$ (resp. $p^{t+l_2}$),
\end{itemize}
or,
\begin{itemize}
\item $N_1=N_4=0$ and $N_2,N_3 \neq 0$,
\item $k_1 \equiv \pm l_2 \pmod{m}$ and $l_1 \equiv \pm k_2 \pmod{m}$,
\item if $k_1 \equiv  l_2 \pmod{m}$ (resp. $l_1 \equiv  k_2 \pmod{m}$) then $N_3$ (resp. $N_2$) is a monomial of degree $p^t$,
\item if $k_1 \equiv  -l_2 \pmod{m}$ (resp. $l_1 \equiv  -k_2 \pmod{m}$) then $N_3$ (resp. $N_2$) is a monomial of degree $p^{t+l_2}$ (resp. $p^{t+k_2}$).
\end{itemize}
\end{theorem}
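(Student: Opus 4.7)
The plan is to leverage the Sylow-subgroup setup of Section~\ref{sec_method} in order to reduce the study of arbitrary isotopisms between $\P_1$ and $\P_2$ to the study of isotopisms of a very rigid monomial shape. Starting from any isotopism $\gamma_0 \in \GL(\F)^3$ between $\P_1$ and $\P_2$, Lemma~\ref{lem:conj_groups} gives $\gamma_0^{-1}\Aut(\P_2)\gamma_0 = \Aut(\P_1)$, so both autotopism groups have the same order. Condition \eqref{eq:condition} together with Lemma~\ref{lem:zpsylow} makes $Z_R^{(q_1,r_1)}$ a Sylow $p'$-subgroup of $\Aut(\P_1)$; consequently the $p'$-part of $|\Aut(\P_2)|$ equals $|R|$, and Lemma~\ref{lem_first} makes $Z_R^{(q_2,r_2)}$ itself a Sylow $p'$-subgroup of $\Aut(\P_2)$ by a count. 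Then $\gamma_0^{-1}Z_R^{(q_2,r_2)}\gamma_0$ is a Sylow $p'$-subgroup of $\Aut(\P_1)$, and by the Sylow theorems there is some $\delta \in \Aut(\P_1)$ conjugating it onto $Z_R^{(q_1,r_1)}$. Replacing $\gamma_0$ by $\gamma := \gamma_0\delta$, still an isotopism between $\P_1$ and $\P_2$, I may assume $\gamma^{-1}Z_R^{(q_2,r_2)}\gamma = Z_R^{(q_1,r_1)}$.

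Next, this equality induces a bijection $\pi : R \to R$ such that, writing $\gamma = (N, L, M)$ componentwise, $L \, \diag(m_{\pi(a)}, m_{\pi(a)}) = \diag(m_a, m_a)\, L$, the analogous relation for $M$, and a twisted relation for $N$ pairing the diagonal exponents $q_1+1, r_1+1$ on the right against $q_2+1, r_2+1$ on the left. In particular $L, M \in N_{\GL(\F)}(S_R)$, so Lemma~\ref{lem:yoshiara}(a) places them in the claimed monomial shape. Comparing coefficients in the linearized-polynomial expansion (as in the proof of that lemma), each nonzero $L_i$ or $M_j$ is forced to be a monomial $c\, x^{p^t}$ whose Frobenius exponent $p^t$ satisfies $\pi(a)^{p^t} = a$ for every $a \in R$. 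Since $|R|$ is a power of the primitive prime $p'$ and $p$ has order $m$ modulo $p'$, this equation has a unique solution $t \in \{0, \dots, m-1\}$, so all nonzero blocks of $L$ and $M$ share the common degree $p^t$, and $\pi$ is forced to be $a \mapsto a^{p^{m-t}}$ on $R$.

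The analysis of $N$ proceeds along the same lines but with more delicate exponent bookkeeping. Writing each nonzero block $N_i = c_i \, x^{p^{j_i}}$ and substituting $\pi(a) = a^{p^{m-t}}$, the twisted relation forces the four congruences $(q_1+1)p^{j_1-t} \equiv q_2+1$, $(r_1+1)p^{j_2-t} \equiv q_2+1$, $(q_1+1)p^{j_3-t} \equiv r_2+1$, $(r_1+1)p^{j_4-t} \equiv r_2+1$, all read modulo $p'$. Under the hypotheses $1, Q \notin \{q_1, r_1\}$ and $q_1 \notin \{r_1, \overline{r_1}\}$, no Frobenius twist on $R$ can identify $q_1+1$ with $r_1+1$, so the congruences for $N_1$ and $N_2$ cannot both hold, and analogously for $N_3, N_4$. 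Combined with the invertibility of $N$, which forbids any entire zero row or column, this forces the claimed dichotomy $N_2 = N_3 = 0$ or $N_1 = N_4 = 0$. Solving the surviving congruences case-by-case reduces to matching $p^{k_1}+1$ with $p^{\pm k_2}+1$ (respectively $p^{\pm l_2}+1$), which yields the congruences $k_1 \equiv \pm k_2 \pmod m$ (respectively $k_1 \equiv \pm l_2 \pmod m$) together with the explicit degrees $p^t$ or $p^{t+k_2}$ for $N_1$, and the analogous statements for the remaining blocks.

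I expect the main technical obstacle to be the last step: carefully ruling out unexpected solutions of the exponent congruences on $R$, and verifying that the hypotheses $q_1 \notin \{1, Q, r_1, \overline{r_1}\}$ are exactly what is needed to keep the four quantities $q_1+1$, $p^{-k_1}(q_1+1)$, $r_1+1$, $p^{-l_1}(r_1+1)$ pairwise distinct modulo $p'$. Without this distinctness, additional non-monomial isotopisms could survive the Sylow reduction, blurring the clean structural statement of the theorem.
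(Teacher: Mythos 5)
Your Sylow reduction is sound and matches the paper's: conjugating $Z_R^{(q_2,r_2)}$ into $\Aut(\P_1)$, invoking Lemma~\ref{lem:zpsylow} under Condition~\eqref{eq:condition}, and adjusting the isotopism by an autotopism of $\P_1$ so that it conjugates $Z_R^{(q_2,r_2)}$ onto $Z_R^{(q_1,r_1)}$ is exactly the paper's first step, and your use of Lemma~\ref{lem:yoshiara} plus the common permutation $\pi$ to force all non-zero blocks of $L$ and $M$ to be monomials of one common degree $p^t$ is a valid (and slightly more direct) version of what the paper does.

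The gap is in your treatment of $N$. You try to extract the vanishing pattern of the $N_i$ and the relations $k_1\equiv\pm k_2$, $l_1\equiv\pm l_2 \pmod m$ purely from the twisted conjugation relation, i.e.\ from congruences such as $(q_1+1)p^{j_1-t}\equiv q_2+1$ read modulo $p'$ (or modulo $|R|$). These congruences are far too weak: $|R|$ is only the $p'$-part of $p^m-1$ and can be as small as $p'\approx m$, while the quantities $p^a+p^b$ range over roughly $m^2/2$ multisets of exponents, so "accidental" coincidences $(q_1+1)p^{j}\equiv(r_1+1)p^{j'}\pmod{|R|}$ occur even when $q_1\notin\{r_1,\overline{r_1}\}$. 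Concretely, for $p=3$, $m=6$ one has $p'=7$, $|R|=7$, and $(3^1+1)\equiv(3^2+1)\cdot 3^3\pmod 7$ although $k_1=1$, $l_1=2$ satisfy all your distinctness hypotheses; so your claimed dichotomy "$N_1$ and $N_2$ cannot both survive" does not follow from the congruences, and neither do the congruences $k_1\equiv\pm k_2\pmod m$. What is missing is any use of the isotopism equation itself. The paper's proof at this point switches from group theory to an exact coefficient comparison in the polynomial identity $N((x,y)\ast_1(u,v))=L(x,y)\ast_2 M(u,v)$: Lemma~\ref{lem:a0condition} guarantees that some component of $\ast_1$ contains both $x^{q_1}u$ and $xu^{q_1}$, and comparing the \emph{actual} exponents of the resulting monomials (equivalently, the differences $\pm k_1$ of $p$-adic valuations, taken modulo $m$, not modulo $p'$) against the admissible shapes $w^{p^{k_2+t}}z^{p^{t}}$, $w^{p^{t}}z^{p^{k_2+t}}$ on the other side is what forces $N_2=N_3=0$ (or $N_1=N_4=0$), the exponent relations, and the precise degrees $p^t$ or $p^{t+k_2}$ of the surviving blocks. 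Your argument needs to be replaced by (or supplemented with) this polynomial-degree comparison; the hypotheses $q_1\notin\{1,Q,r_1,\overline{r_1}\}$ are used there to separate exponents modulo $m$, not modulo $p'$.
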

\begin{proof}
	
Set 
\begin{align*}
	(x,y)\ast_1(u,v)&=(f_1(x,y,u,v),g_1(x,y,u,v)), \textrm{ and}\\
	(x,y)\ast_2(u,v)&=(f_2(x,y,u,v),g_2(x,y,u,v)).
\end{align*}
By Lemma \ref{lem_first}, we have $Z_R^{(q_1,r_1)} \le \Aut(\P_1)$
and $Z_R^{(q_2,r_2)} \le \Aut(\P_2)$.
Assume $\P_1$ and $\P_2$ are isotopic via the isotopism $\delta \in \GL(\F)^3$ 
that maps $\P_1$ to $\P_2$. 
Then $\delta^{-1} \Aut(\P_2) \delta = \Aut(\P_1)$ by Lemma~\ref{lem:conj_groups}. 
Observe that $|\delta^{-1}Z_R^{(q_2,r_2)} \delta|=|R|=|Z_R^{(q_1,r_1)}|$, 
so $Z_R^{(q_1,r_1)}$ and $\delta^{-1}Z_R^{(q_2,r_2)}\delta$ are 
Sylow $p'$-subgroups of  $\Aut(\P_1)$ by Lemma~\ref{lem:zpsylow} as long as 
Condition~\eqref{eq:condition} holds. In particular, these two subgroups are 
conjugate in $\Aut(\P_1)$ by Sylow Theorem (iii), i.e., there exists a 
$\lambda \in \Aut(\P_1)$ such that 
\begin{equation}
\label{eq:conjugated}
\lambda^{-1} \delta^{-1} Z_R^{(q_2,r_2)} \delta \lambda = (\delta \lambda)^{-1} Z_R^{(q_2,r_2)} (\delta\lambda) = Z_R^{(q_1,r_1)}.
\end{equation}

Set $\gamma= (N,L,M)=\delta \lambda$. Note that $\gamma$ is an isotopism between 
$\P_1$ and $\P_2$ since $\lambda \in \Aut(\P_1)$. Eq.~\eqref{eq:conjugated} then 
immediately implies that 
\begin{align*}
\diag(m_{a^{q_2+1}},m_{a^{r_2+1}}) N &= N\diag(m_{b^{q_1+1}},m_{b^{r_1+1}}) \\
\diag(m_{a},m_{a}) L &= L \diag(m_{b},m_{b}) \\
\diag(m_{a},m_{a}) M &= M \diag(m_{b},m_{b}),
\end{align*}
for all $a \in R$ and $b=\pi(a)$ where $\pi \colon R \rightarrow R$ is a permutation. 
In particular, $L$ and $M$ are in the normalizer of 
$S_R = \{\diag(m_{a},m_{a}) \colon a \in R\}$. 
By Lemma~\ref{lem:yoshiara}, all of the four subfunctions of $L$ and $M$  are zero or 
monomials of the same degree, say $p^{t_2}$ and $p^{t_3}$, respectively. Then, for all $(x,y),(u,v)\in\M^2$,
\begin{align*}
L(x,y) \ast_2 M(u,v) &= (a_2x^{p^{t_2}}+b_2y^{p^{t_2}},c_2x^{p^{t_2}}+d_2y^{p^{t_2}}) \ast_2 
                        (a_3u^{p^{t_3}}+b_3v^{p^{t_3}},c_3u^{p^{t_3}}+d_3v^{p^{t_3}}),\\
	                   &= (h_1(x,y,u,v), h_2(x,y,u,v))
\end{align*}
for some $a_2,b_2,c_2,d_2,a_3,b_3,c_3,d_3 \in \M$.
We also have
	\begin{align*}
		N((x,y)\ast_1(u,v)) = (&N_{1}(f_1(x,y,u,v))+N_{2}(g_1(x,y,u,v)),\\
		                       &N_{3}(f_1(x,y,u,v))+N_{4}(g_1(x,y,u,v))).
	\end{align*}
Let us now assume that $N((x,y)\ast_1(u,v)) = L(x,y)\ast_2 M(u,v)$. We consider the first component, i.e. $h_1(x,y,u,v) = N_{1}(f_1(x,y,u,v))+N_{2}(g_1(x,y,u,v))$. Lemma~\ref{lem:a0condition} implies both monomials $x^qu$ and $xu^q$ occur in one of the two components of $\P_1$. Since switching the components clearly preserves isotopy, we can assume without loss of generality that they occur in the first component $f_1$. Let us for now assume $N_1 \neq 0$. We know that $N_1(f_1(x,y,u,v))$ has then terms of the form 
	\begin{equation}
	x^{p^{k_1+t}}u^{p^t} \text{ and } x^{p^t}u^{p^{k_1+t}}
	\label{eq:degrees1}
	\end{equation}
	 for at least one $0 \leq t \leq m-1$. Observe that the differences of 
	 $p$-adic valuations of exponents in the $x$- and $u$-degrees of the 
	 monomials are $k_1 + t - t = k_1$ and $t-k_1-t = -k_1$, respectively. In particular, 
	 if $q_1 \neq r_1$, $q_1 \neq \overline{r_1}$, then these terms cannot be 
	 canceled out by $N_2(g_1(x,y,u,v))$. Since $\P_2$ is a $(q_2,r_2)$-biprojective 
	 pre-semifield, all possible terms in $h_1$ are of the form 
		\begin{equation}
		w^{p^{k_2+t_2}}z^{p^{t_3}} \text{ or }w^{p^{t_2}}z^{p^{k_2+t_3}},
	\label{eq:degrees2}
	\end{equation}
	 where $w \in \{x,y\}$, $z \in \{u,v\}$ and $0\leq t_2,t_3 \leq m-1$. 
	 Comparing Eqs. ~\eqref{eq:degrees1} and~\eqref{eq:degrees2} gives either

\noindent\begin{minipage}{0.5\textwidth}
\begin{align*}
k_1+t &\equiv k_2+t_2 \pmod m, \\
t     &\equiv t_3 \pmod m, \\
t     &\equiv t_2 \pmod m, \\
k_1+t &\equiv k_2+t_3 \pmod m, \textrm{ or,}
\end{align*}
\end{minipage}
\begin{minipage}{0.5\textwidth}
\begin{align*}
k_1+t &\equiv t_2 \pmod m, \\
t     &\equiv k_2+t_3 \pmod m, \\
t     &\equiv k_2+t_2 \pmod m, \\
k_1+t &\equiv t_3 \pmod m.
\end{align*}
\end{minipage}
	The first possibility is equivalent to $t=t_2=t_3$, $k_1 \equiv k_2 \pmod m$ and the second implies $t_2=t_3$, $t=t_2+k_2$ and $k_1 \equiv -k_2 \pmod m$. Note in particular that in any case $t_2=t_3$. Moreover, both cases cannot occur simultaneously since, by assumption,  $k_1 \not\equiv m/2 \pmod m$. We conclude that $N_1$ is a monomial with the same degree $p^{t_2}$ as the $L_i,M_i$ if $k_1 \equiv k_2 \pmod m$, and a monomial with the degree $p^{t_2+k_2}$ if $k_1 \equiv -k_2 \pmod m$. 
\vskip1em

	Now assume $N_2\neq 0$ and observe that the terms of $N_2(g_1(x,y,u,v))$ are of the form 
	\begin{equation}
		w^{p^{l_1+t}}z^{p^t} \text{ or }w^{p^t}z^{p^{l_1+t}}
	\label{eq:n2}
	\end{equation},
where $w \in \{x,y\}$, $z \in \{u,v\}$ and some $0 \leq t \leq m-1$. In particular, the difference of  $p$-adic valuations of exponents of the two monomials is $l_1$ or $-l_1$. This however yields a contradiction since the difference of $p$-adic valuations of exponents  in Eq.~\eqref{eq:degrees2} is $k_2$ or $-k_2$, that is by the considerations above, $-k_1$ or $k_1$, which leads to $k_1\equiv \pm l_1 \pmod m$ which is not possible since $q_1 \neq r_1$, $q_1 \neq \overline{r_1}$. We conclude that $N_2=0$.  
	
		Let us now consider the second component. Since $N_2=0$, we must have $N_4\neq 0$ since $N$ is bijective. The terms of $N_4(g_1(x,y,u,v))$ are then again of the same form as in Eq.~\eqref{eq:n2}. Similar to Eq.~\eqref{eq:degrees2}, the terms in $h_2$ are of the form (using $t_2=t_3$)
				\begin{equation}
		w^{p^{l_2+t_2}}z^{p^{t_2}} \text{ or }w^{p^{t_2}}z^{p^{l_2+t_2}},
	\label{eq:degrees3}
	\end{equation}
			where $w \in \{x,y\}$, $z \in \{u,v\}$.
	A comparison between the Eqs.~\eqref{eq:n2} and~\eqref{eq:degrees3} yields either $t=t_2$ and $l_1	\equiv l_2 \pmod m$ or $t=l_2+t_2$ and $l_1 \equiv -l_2 \pmod m$. Again, this means that $N_4$ is a monomial of degree $p^{t_2}$ or $p^{l_2+t_2}$ as both cases cannot occur simultaneously since $l_1 \not\equiv m/2 \pmod m$. 
	
	Since $q_1 \neq r_1$, $q_1 \neq \overline{r_1}$, we can again deduce $N_3=0$ with the same argument we used to prove $N_2=0$ before. This concludes the case $N_1\neq 0$.
	
	Now assume $N_1=0$. Since $N$ is bijective, this implies $N_3\neq 0$. We can then employ 
	the entire argument, just starting with the second component and exchanging $k_2$ and $l_2$, 
	$N_1$ and $N_3$, and $N_2$ and $N_4$ throughout. We conclude that in this case $N_3,N_2\neq 0$ 
	and $N_1=N_4=0$.

	\end{proof}
\begin{remark}
\begin{enumerate}[(i)]
\item 
We exclude the cases $q_1=r_1$, $q_1=\overline{r_1}$, $1\in \{q_1,r_1\}$, and $Q\in\{q_1,r_1\}$.
It is possible to give (slightly weaker) versions of Theorem~\ref{thm:equivalence} also 
in the excluded cases. We avoided these cases to simplify the exposition.
For instance, when we allow $Q \in \{q_1,r_1\}$, then the subfunctions of $N$ may be 
binomials of the form $N_i=ax^{p^t}+bx^{p^{t+m/2}}$. We will showcase an isotopy of 
this kind in Remark~\ref{rem:q1} in Section~\ref{sec_equiv}. Observe that in the version 
we have given, all non-zero subfunctions of $N,L,M$ are monomials. We chose this 
presentation of the theorem to avoid listing unnecessary special cases that we do 
not need in this paper.

\item We will mainly use Theorem~\ref{thm:equivalence} to determine the number of 
isotopy classes in Family $\Family 1$. Of course, it can also be used to give 
alternative (in most cases simpler) proofs of the number of isotopy classes of the 
known commutative biprojective pre-semifields.
\end{enumerate}

\end{remark}

Theorem~\ref{thm:equivalence} enables us to settle the isotopy question 
of biprojective pre-semifields with relative ease as long as Condition~\eqref{eq:condition} 
is satisfied. 
In the next section we will first show that Condition~\eqref{eq:condition}
is satisfied for our family and then use Theorem~\ref{thm:equivalence} to
determine the number of non-isotopic semifields in the family. Note that the 
condition $m>2$ we stipulate in this section is not restrictive when considering 
$\Family 1$ since it does not yield semifields when $m$ is a power of $2$.


\section{Isotopisms within the Family $\mathcal{S}$} \label{sec_counts1}

In this section we will show that the number of non-isotopic semifields
within the Family $\Family{1}$ is exponential in $n$ (when $n=4t$, where
$t$ is not a power of $2$). 
We first need to check Condition~\eqref{eq:condition} in 
Theorem~\ref{thm:equivalence} for the pre-semifields in the Family $\mathcal{S}$.
The following lemma does that in a straightforward manner.

\begin{lemma} \label{lem:centralizer_F1}
Let $n = 2m$ and $\P=(\M \times \M,+,\ast)$ be a $(q,r)$-biprojective pre-semifield 
in the Family $\mathcal{S}$. Then 
\begin{align*}
|C_{\P}| &=  (p^m-1)(p^{\Gcd{k}{m}}-1), \textrm{ or}\\
|C_{\P}| &= 2(p^m-1)(p^{\Gcd{k}{m}}-1).
\end{align*}
In particular, Condition~\eqref{eq:condition} is always satisfied.
\end{lemma}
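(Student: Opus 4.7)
The plan is to compute $|C_\P|$ in two parts: first I pin down the shape of any $\gamma=(N,L,M)\in C_\P$ using the centralizer hypothesis together with Lemma~\ref{lem:yoshiara}, and then I count the solutions of the autotopism identity $N((x,y)\ast(u,v))=L(x,y)\ast M(u,v)$ under this restricted shape.

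For the shape, the components $L$ and $M$ commute with $S_R=\{\diag(m_a,m_a):a\in R\}$, so Lemma~\ref{lem:yoshiara}(b) forces $L$ and $M$ to be $2\times 2$ matrices of multiplications $m_{\alpha_{ij}},m_{\beta_{ij}}$ with entries in $\M$. The same argument applied to the diagonal blocks of $N$, which must commute with $\diag(m_{a^{q+1}},m_{a^{r+1}})$, requires the maps $a\mapsto a^{q+1}$ and $a\mapsto a^{r+1}$ to be injective on $R$. For Family~$\mathcal{S}$, Lemma~\ref{lem:gcd} together with Lemma~\ref{lem_thm}(iii) gives $\gcd(q+1,p^m-1)=2$ and $\gcd(r+1,p^m-1)=p^{e/2}+1$, both coprime to $p'$ (since $p'\ne 2$ and $p'\nmid p^e-1$, as $e<m$ and $p'$ is a primitive divisor of $p^m-1$). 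Hence $N_1$ and $N_4$ are forced to be multiplications $m_{c_1}$ and $m_{c_4}$.

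For the off-diagonal blocks of $N$, I invoke the autotopism identity itself. With $L,M$ matrices of multiplications, the first component of $L(x,y)\ast M(u,v)$ consists only of $q$-biprojective monomials whose exponent shape in each of the $(x,u),(x,v),(y,u),(y,v)$ pairings is $(p^k,p^0)$ or $(p^0,p^k)$. On the right-hand side, a non-zero monomial summand $c_s x^{p^s}$ of $N_2$ applied to $h_2$ would contribute a monomial of shape $(p^{l+s},p^s)$ or $(p^s,p^{l+s})$. Matching exponents modulo $m$ gives either the impossible $l\equiv k\pmod m$ (since $l-k=m/2\not\equiv 0$), or $2k\equiv m/2\pmod m$, which forces $k=m/4$ and hence $m/e=4$, contradicting $m/e$ odd. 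Hence $N_2=0$, and the symmetric argument (swapping the roles of $q$ and $r$) gives $N_3=0$, so $N=\diag(m_{c_1},m_{c_4})$.

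With $N,L,M$ all matrices of multiplications, the autotopism identity reduces to a polynomial system in $\alpha_{ij},\beta_{ij},c_1,c_4$ obtained by matching the coefficients of the $16$ monomials appearing. A short case analysis on the vanishing pattern of $\alpha_{ij},\beta_{ij}$ (using $B\notin\squares{\M}$ and $a\in\L^\times$) leaves essentially the diagonal solution $L=M=\diag(m_\alpha,m_\beta)$ with $(\alpha/\beta)^{q+1}=(\alpha/\beta)^{r-1}=1$ (forcing $\alpha/\beta\in\{\pm 1\}$ by Lemma~\ref{lem:gcd}) and possibly an antidiagonal solution whose existence hinges on an additional arithmetic condition linking $a$ and $B$. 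Each configuration is parametrized by a global $\M^\times$-scaling from $Z^{(q,r)}$ together with an independent $\E^\times$-scaling (the middle-nucleus action, i.e.\ the $(q-1)$-th roots of unity in $\M^\times$ that form $\E^\times$), producing $|C_\P|\in\{(p^m-1)(p^e-1),\,2(p^m-1)(p^e-1)\}$. Since $p'\ne 2$ and $p'\nmid p^e-1$, the index $[C_\P:Z^{(q,r)}]\in\{p^e-1,\,2(p^e-1)\}$ is coprime to $p'$, verifying Condition~\eqref{eq:condition}. The main obstacle I anticipate is the last case analysis---specifically determining exactly when the antidiagonal case contributes the factor $2$---but with the shape from the earlier steps in hand this reduces to a bounded algebraic check.
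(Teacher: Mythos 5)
Your structural reduction is sound, and in one place it takes a slightly different (arguably cleaner) route than the paper: you obtain that $N_1,N_4$ are multiplications directly from the centralizer relations $m_{a^{q+1}}N_1=N_1m_{a^{q+1}}$ and $m_{a^{r+1}}N_4=N_4m_{a^{r+1}}$, using $\Gcd{q+1}{|R|}=\Gcd{r+1}{|R|}=1$ so that $a^{q+1}$ and $a^{r+1}$ sweep out all of $R$ and Lemma~\ref{lem:irredm} applies, whereas the paper reads this off from a degree comparison in the autotopism identity. Your elimination of $N_2,N_3$ via the exponent-shape mismatch $l\equiv\pm k\pmod m$ is also correct (note only that $2k\equiv m/2\pmod m$ also allows $k=3m/4$, which is excluded for the same reason $k=m/4$ is, namely $m/e=4$ contradicting $m/e$ odd).

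The gap is in the final count, which is the actual substance of the lemma. First, ruling out the \emph{mixed} vanishing patterns --- e.g.\ $a_2,c_2,a_3,c_3$ all nonzero among the entries of $L$ and $M$ --- is not a routine bounded check: the paper extracts from two of the sixteen coefficient equations the relation $-a/B=(a_2/c_2)^{qQ-1}\zeta$ with $\zeta^{qQ+1}=1$, and then needs the specific observation that, since $\Gcd{qQ+1}{p^m-1}=p^{e/2}+1$ divides $p^e-1$, such a $\zeta$ lies in $\E$ and is a $(p^{e/2}-1)$-st power there, hence a square in $\M$ (as $m/e$ is odd), while $-a/B$ is a non-square. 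This is the step that forces $L,M$ to be diagonal or antidiagonal, and nothing in your sketch supplies it. Second, your description of the surviving diagonal solutions as $L=M=\diag(m_\alpha,m_\beta)$ with $\alpha/\beta=\pm1$ is not what actually survives: the full diagonal family has $M=\diag(m_{\omega_1 a_2},m_{\omega_1^Q d_2})$ with $\omega_1$ ranging over the \emph{squares} of $\E^\times$ and $(a_2/d_2)^{q+1}=\omega_1^{Q-1}$, so $L\neq M$ in general, and the exact factor $p^e-1$ comes from counting admissible $\omega_1$ times the sign ambiguity in $d_2$; the ``global scaling times middle-nucleus action'' heuristic gives the right order of magnitude but needs an intersection computation to become a count. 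Since Condition~\eqref{eq:condition} is precisely the assertion that no unexpected centralizer elements exist, deferring these cases defers the lemma.
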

\begin{proof}
	If $(N,L,M) \in C_{\P}$ then, by Lemma~\ref{lem:yoshiara}, the subfunctions $L_i$ and $M_i$ for $i \in \{1,2,3,4\}$ are zero or monomials of degree $1$, we write 
\begin{align*}
& L_{1}(x)=a_2x,\quad  L_{2}(x)=b_2x,\quad  L_{3}(x)=c_2x,\quad  L_{4}(x)=d_2x,\\
& M_{1}(x)=a_3x,\quad  M_{2}(x)=b_3x,\quad  M_{3}(x)=c_3x,\quad  M_{4}(x)=d_3x.
\end{align*}
	We then have 
\begin{align}
	L(x,y)\ast M(u,v) =&(a_2x+b_2y,c_2x+d_2y)\ast(a_3u+b_3v,c_3u+d_3v)\nonumber\\
	                  =&((a_2x+b_2y)^q(a_3u+b_3v)+ (a_2x+b_2y)(a_3u+b_3v)^q\nonumber \\
	                   &+B((c_2x+d_2y)^q(c_3u+d_3v)+(c_2x+d_2y)(c_3u+d_3v)^q),\nonumber \\
	                   &(a_2x+b_2y)^{qQ}(c_3u+d_3v)+(c_2x+d_2y)(a_3u+b_3v)^{qQ}\nonumber \\
	&+\frac{a}{B}((a_2x+b_2y)(c_3u+d_3v)^{qQ}+(c_2x+d_2y)^{qQ}(a_3u+b_3v))).\label{eq:F1_RHS}
\end{align}
	Similarly, we have
	\begin{align*}
		N((x,y)\ast(u,v)) = (&N_{1}(x^qu+xu^q+B(y^qv+yv^q))+N_{2}(x^{qQ}v+yu^{qQ}+\frac{a}{B}(xv^{qQ}+y^{qQ}u)),\\
		&N_{3}(x^qu+xu^q+B(y^qv+yv^q))+N_{4}(x^{qQ}v+yu^{qQ}+\frac{a}{B}(xv^{qQ}+y^{qQ}u))).
	\end{align*}
	By comparing the degrees, it is then easy to see that $N((x,y)\ast(u,v))=L(x,y)\ast M(u,v)$ implies $N_2=N_3=0$ and $N_1=a_1x$, $N_4=d_1x$ for some $a_1,d_1 \in \M^{\times}$. Then
	\begin{equation}
		N((x,y) \ast (u,v))  = (a_1(x^qu+xu^q+B(y^qv+yv^q)), d_1(x^{qQ}v+yu^{qQ}+\frac{a}{B}(xv^{qQ}+y^{qQ}u))).
	\label{eq:F1_LHS}
	\end{equation}

	We compare the coefficients of
	$x^qu$, $xu^q$, $x^qv$, $xv^q$, $y^qu$, $yu^q$, $y^qv$, $yv^q$ in the first 
	component of Eqs.~\eqref{eq:F1_RHS} and \eqref{eq:F1_LHS} to get the following
	$8$ equations:		
		
 \noindent\begin{minipage}{0.5\textwidth}
\begin{align}
	a_1 &= a_2^qa_3+Bc_2^qc_3 \label{eq:F4_l1}\\
	a_1&=a_2a_3^q+Bc_2c_3^q \label{eq:F4_l2}\\	
	0 &= a_2^qb_3+Bc_2^qd_3\label{eq:F4_l3}\\
	0 &= a_2b_3^q+Bc_2d_3^q\label{eq:F4_l4} 
\end{align}
    \end{minipage}%
    \begin{minipage}{0.5\textwidth}
	\begin{align}
	0 &= b_2^qa_3+Bd_2^qc_3\label{eq:F4_r1} \\
	0 &= b_2a_3^q+Bd_2c_3^q\label{eq:F4_r2}\\
	Ba_1 &= b_2^qb_3+Bd_2^qd_3 \label{eq:F4_r3}\\
	Ba_1 &= b_2b_3^q+Bd_2d_3^q.\label{eq:F4_r4}
	\end{align}
    \end{minipage}\vskip1em
	And similarly the $8$ equations that come from comparing the coefficients in the second component:
	
 \noindent\begin{minipage}{0.5\textwidth}
\begin{align}
	0 &= a_2^{qQ}c_3+(a/B)c_2^{qQ}a_3 \label{eq:F4_l12}\\
	0&=(a/B)a_2c_3^{qQ}+c_2a_3^{qQ}\label{eq:F4_l22}\\	
	d_1 &= a_2^{qQ}d_3+(a/B)c_2^{qQ}b_3\label{eq:F4_l32}\\
	d_1a/B &= (a/B)a_2d_3^{qQ}+c_2b_3^{qQ}\label{eq:F4_l42} 
\end{align}
    \end{minipage}%
    \begin{minipage}{0.5\textwidth}
	\begin{align}
	d_1a/B &= b_2^{qQ}c_3+(a/B)d_2^{qQ}a_3\label{eq:F4_r12} \\
	d_1 &= (a/B)b_2c_3^{qQ}+d_2a_3^{qQ}\label{eq:F4_r22}\\
	0 &= b_2^{qQ}d_3+(a/B)d_2^{qQ}b_3 \label{eq:F4_r32}\\
	0 &= (a/B) b_2d_3^{qQ}+d_2b_3^{qQ}.\label{eq:F4_r42}
	\end{align}
    \end{minipage}\vskip1em
		
	Let us first assume that $a_2,c_2,a_3,c_3 \neq 0$. Then by Eqs.~\eqref{eq:F4_l12} and~\eqref{eq:F4_l22}, we have 
	\[ - \frac{a}{B} = \frac{a_2^{qQ}c_3}{c_2^{qQ}a_3} = \frac{c_2a_3^{qQ}}{a_2c_3^{qQ}}.\]
	Setting $a_3 = \omega_1 a_2$, $c_3 = \omega_2c_2$ gives
	\[ - \frac{a}{B} = \left(\frac{a_2}{c_2}\right)^{qQ-1}\frac{\omega_2}{\omega_1} = \left(\frac{a_2}{c_2}\right)^{qQ-1}	\left(\frac{\omega_1}{\omega_2}\right)^{qQ}.\]
	This implies $\omega_1^{qQ+1} = \omega_2^{qQ+1}$, that is $\omega_2 = \zeta \omega_1$ where $\zeta$ is a $(qQ+1)^\textrm{st}$ root of unity. Substituting this into the previous equation yields
	\begin{equation}
		- \frac{a}{B} = \left(\frac{a_2}{c_2}\right)^{qQ-1}\zeta.
	\label{eq:f4_casenonzero}
	\end{equation}
	By Lemmas~\ref{lem:gcd} and \ref{lem_thm} (iii), we have $\Gcd{qQ+1}{p^m-1}=p^{\Gcd{k}{m}/2}+1$, so $\zeta$ is a $(p^{\Gcd{k}{m}/2}+1)^\textrm{st}$ root of unity. In particular, $\zeta \in \E$ since $p^{\Gcd{k}{m}/2}+1$ divides $p^{\Gcd{k}{m}}-1$. The $(p^{\Gcd{k}{m}/2}+1)^\textrm{st}$ roots of unity in $\E$ are precisely the $(p^{\Gcd{k}{m}/2}-1)^\textrm{st}$ powers in $\E$. In particular, $\zeta$ is a square. This is however a contradiction to Eq.~\eqref{eq:f4_casenonzero} since the left hand side is a non-square (recall that $-1$ and $a$ are squares), and the right hand side is a square. We conclude that $a_2c_2a_3c_3=0$. We can proceed identically with $b_2,d_2,b_3,d_3$ and Eqs.~\eqref{eq:F4_r32} and \eqref{eq:F4_r42} which yields $b_2d_2b_3d_3=0$. The conditions in Eqs.~\eqref{eq:F4_l12}, \eqref{eq:F4_l22}, \eqref{eq:F4_r32}, \eqref{eq:F4_r42} and the bijectivity of $L,M$ then only leave two possibilities: Either $a_2=a_3=d_2=d_3=0$ and $b_2,b_3,c_2,c_3 \neq 0$ or, the other way round, $a_2,a_3,d_2,d_3 \neq 0$ and $b_2=b_3=c_2=c_3=0$. We will deal with these two cases separately. Note that in both cases, Eqs. \eqref{eq:F4_l3}, \eqref{eq:F4_l4}, \eqref{eq:F4_r1}, \eqref{eq:F4_r2}, \eqref{eq:F4_l12}, \eqref{eq:F4_l22}, \eqref{eq:F4_r32}, \eqref{eq:F4_r42} are always satisfied. \\
	
	\textbf{Case $b_2,b_3,c_2,c_3=0$:}
	We set $a_3 = \omega_1 a_2$, $d_3 = \omega_2 d_2$. Then Eqs.~\eqref{eq:F4_l1}, \eqref{eq:F4_l2}, \eqref{eq:F4_r3}, \eqref{eq:F4_r4} become
	\[a_1 = a_2^{q+1}\omega_1 = a_2^{q+1} \omega_1^q = d_2^{q+1} \omega_2 = d_2^{q+1} \omega_2^q, \]
	which is satisfied if and only if $\omega_1,\omega_2 \in \E$ and 
	\begin{equation}
		(a_2/d_2)^{q+1} = \omega_2/\omega_1. 
	\label{eq:F4_cent_case1}
	\end{equation}

	Similarly, from Eq.~\eqref{eq:F4_l32}, \eqref{eq:F4_l42}, \eqref{eq:F4_r12}, \eqref{eq:F4_r22}, we get immediately (using $\omega_1,\omega_2 \in \E$),
\[
d_1 = a_2^{qQ}d_2 \omega_2 = a_2d_2^{qQ} \omega_2^{Q} = a_2d_2^{qQ} \omega_1 = a_2^{qQ}d_2 \omega_1^{Q}.\]
	This is equivalent to $\omega_1 = \omega_2^Q$, $(a_2/d_2)^{qQ-1} = \omega_2^{Q-1}$. Multiplying this with Eq.~\eqref{eq:F4_cent_case1} gives
		\[(a_2/d_2)^{q(Q+1)} = 1, \]
	i.e., $a_2/d_2 \in (\M^\times)^{Q-1}$, say $\zeta^{Q-1} = a_2/d_2$. Rewriting Eq.~\eqref{eq:F4_cent_case1} gives
	\[(a_2/d_2)^{q+1} = (\zeta^{q+1})^{Q-1} = \omega_1^{Q-1}.\]
	The equation cannot be satisfied if $\omega_1$ is a non-square. If $\omega_1$ is a square, then $d_2$ is uniquely determined up to the sign from $\omega_1$ and $a_2$. Thus we have in total $p^m-1$ choices for $a_2$, $(p^{\Gcd{k}{m}}-1)/2$ choices for $\omega_1$ and $2$ choices for $d_2$, making in total $(p^m-1)(p^{\Gcd{k}{m}}-1)$ choices in this case.\\
	\textbf{Case $a_2,a_3,d_2,d_3=0$:}	Similarly to the previous case, we set $b_3 =\omega_1 b_2$, $c_3 = \omega_2 c_2$. We get from the first set of equations:
		\[a_1 = Bc_2^{q+1}\omega_2 = Bc_2^{q+1} \omega_2^q = (1/B)b_2^{q+1} \omega_1 = (1/B)b_2^{q+1} \omega_1^q, \]
	which is equivalent to $\omega_1,\omega_2 \in \E$ and 
	\begin{equation}
		(b_2/c_2)^{q+1} = B^2\omega_2/\omega_1. 
	\label{eq:F4_cent_case2}
	\end{equation}
	The second set of equations gives
		\[d_1 = (a/B)c_2^{qQ}b_2 \omega_1 = (B/a)c_2b_2^{qQ} \omega_1^{Q} = (B/a)c_2b_2^{qQ} \omega_2 = (a/B)c_2^{qQ}b_2 \omega_2^{Q}.  \]
	This again implies $\omega_1^Q = \omega_2$ and $(b_2/c_2)^{qQ-1} = (1/\omega_1^{Q-1})(a/B)^2$. Multiplying this with Eq.~\eqref{eq:F4_cent_case2} gives
	\[(b_2/c_2)^{q(Q+1)} = \frac{\omega_2}{\omega_1^Q} a^2 = a^2 = a^{Q+1}. \]
	Thus, $b_2/c_2$ is determined up to multiplication with a $(Q+1)^\textrm{st}$ root of unity, that is, a $(Q-1)^\textrm{st}$ power, say $(b_2/c_2)^q = \zeta^{Q-1} a$ . Eq.~\eqref{eq:F4_cent_case2} can be rewritten as
	\[(b_2/c_2)^{q+1} = a^{\overline{q}+1}(\zeta^{\overline{q}+1})^{Q-1}  = B^2\omega_1^{Q-1}. \]
	This equation has a solution if and only if $(a^{\overline{q}+1})/B^2$ is a $(Q-1)^\textrm{st}$ power, say $(a^{\overline{q}+1})/B^2=\rho^{Q-1}$. In this case, there are $(p^{\Gcd{k}{m}}-1)/2$ possible choices for $\omega_1$, either all squares or all non-squares, depending on if $\rho$ is a square or not. Then $\zeta$ is determined up to the sign, so there are $p^m-1$ possible choices for $b_2$, $2$ possible choices for $c_2$ and $(p^{\Gcd{k}{m}}-1)/2$ possible choices for $\omega_1$. This case thus contributes either $0$ or $(p^m-1)(p^{\Gcd{k}{m}}-1)$ elements. Both cases together show that $|C_{\P}|$ is either $(p^m-1)(p^{\Gcd{k}{m}}-1)$ or $2(p^m-1)(p^{\Gcd{k}{m}}-1)$. 
	
Now it is clear that $p' \nmid [C_{\P} : Z^{(q,r)}] \in \{p^{\Gcd{k}{m}}-1,2(p^{\Gcd{k}{m}}-1)\}$
by our assumption that $p'$ is $p$-primitive (recall $p' \ne 2$).
\end{proof}

We can now apply Thereom~\ref{thm:equivalence} to the pre-semifields in the Family $\mathcal{S}$. 

\begin{theorem} \label{thm:f1_equiv}
Let $\P_{q,B,a}=(\M \times \M,+,\ast_1)$ and $\P_{q',B',a'}=(\M \times \M,+,\ast_2)$ be pre-semifields from the Family $\mathcal{S}$. Then 
\begin{enumerate}[(i)]

	\item $\P_{q,B,a}$  and  $\P_{q',B',a'}$ are isotopic if and only if they are strongly isotopic.
	\item $\P_{q,B,a}$ is isotopic to $\P_{\overline{q},B,a'}$ for $a'=B^{Q+1}/a$ and arbitrary $q$.
	\item $\P_{q,B,a}$ is isotopic to $\P_{q,B',a'}$ for arbitrary $q,B,B',a$ and a suitable choice for $a'$. 
	\item If $\P_{q,B,a}$ is isotopic to $\P_{q,B,a'}$, then it is also isotopic to $\P_{q,B,-a'}$.
	\item There are at most $2m=n$ different $a'$ such that $\P_{q,B,a}$ is isotopic to $\P_{q,B,a'}$.
	\item No other isotopisms exist.
\end{enumerate}
\end{theorem}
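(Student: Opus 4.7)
The plan is to use Theorem~\ref{thm:equivalence} as the principal tool. Its hypothesis Condition~\eqref{eq:condition} holds for every member of $\Family{1}$ by Lemma~\ref{lem:centralizer_F1}, and for the family's defining parameters $q = p^k$, $r = Qq = p^{k+m/2}$ with $0 < k < m/2$ the preconditions $q \notin \{r, \overline{r}\}$, $1 \notin \{q,r\}$, $Q \notin \{q,r\}$ are immediate. Hence any isotopism $(N,L,M)$ between two members of $\Family{1}$ must take one of the two rigid forms listed in Theorem~\ref{thm:equivalence}, and the argument naturally splits into a ``positive'' direction (exhibiting concrete isotopisms) and a ``rigidity'' direction (ruling out further ones).

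For the positive direction, parts (ii)--(iv), I would produce explicit isotopisms. For (iii), I would take $L = M = \diag(m_\alpha, m_\beta)$ for $\alpha, \beta \in \M^\times$ together with a suitable diagonal $N$, substitute into
\[
	N((x,y) \ast_{q,B,a} (u,v)) = L(x,y) \ast_{q,B',a'} M(u,v),
\]
and observe that varying $\alpha,\beta$ allows any target $B'$ to be attained, with the corresponding $a'$ forced by the coefficient matching. For (ii), the isotopism uses $L = M = \diag(\phi_{\overline{q}}, \phi_{\overline{q}})$, i.e.\ componentwise Frobenius by $\overline{q}$. The key computation is that $x^q u + x u^q$ transforms into $x u^{\overline{q}} + x^{\overline{q}} u$, realising the swap $q \leftrightarrow \overline{q}$ in the first component, while $r\overline{q} \equiv Q \pmod{p^m - 1}$ turns the second component into the polarization of an $\overline{r}$-biprojective polynomial; tracking the scalar coefficients pins down $a' = B^{Q+1}/a$. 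For (iv), I would observe that $L = M = \diag(\id, -\id)$ flips the sign of each monomial in the second component (since each $x^\alpha v^\beta$ and $y^\alpha u^\beta$ term picks up exactly one factor of $-1$), while leaving the first component unchanged, thereby sending $\P_{q,B,a}$ to $\P_{q,B,-a}$.

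For the rigidity direction, parts (i), (v), and (vi), I would invoke Theorem~\ref{thm:equivalence} and split into its two cases. In Case~1 ($N_2 = N_3 = 0$), the constraints $(k_1, l_1) = (k, k+m/2) \equiv (\pm k_2, \pm l_2) \pmod m$ force $k_2 \equiv \pm k \pmod m$, so $q' \in \{q, \overline{q}\}$; after composing with the isotopism of (ii) we may assume $q' = q$, and similarly (iii) lets us assume $B' = B$. The theorem then guarantees that $L, M$ have monomial entries of a common degree $p^t$ with $t \in \{0, \ldots, m-1\}$ and that $N_1, N_4$ are monomials of degree $p^t$. Plugging this ansatz into the isotopism equation produces a system of coefficient identities structurally identical to the one analysed in Lemma~\ref{lem:centralizer_F1}, but with a Galois twist by $p^t$; the same dichotomy used there --- either $L, M$ are diagonal or antidiagonal --- applies and yields, for each fixed $t$, at most two values of $a'$, giving the $2m$ bound in (v). In particular, a diagonal $L = M$ produces a strong isotopism, proving (i). Case~2 of Theorem~\ref{thm:equivalence} requires $k \equiv \pm(k' + m/2)$ and $k + m/2 \equiv \pm k' \pmod m$, whose only internally consistent subcase forces $k' \equiv m/2 - k \pmod m$; I would show that this case either reduces to Case~1 after composing with the isotopism of (ii) or is incompatible with the planarity constraints on $k$ in $\Family{1}$.

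The main obstacle will be the coefficient bookkeeping in Case~1. There are sixteen coefficient equations (eight per component of the output) in the eight unknowns coming from $L, M$ together with the two of $N$, and isolating precisely which combinations of sign choices, Frobenius twists $p^t$, and $(Q-1)$-st-power parameters yield genuinely new values of $a'$ --- and which are redundant modulo the isotopisms produced in (ii)--(iv) --- will require careful arithmetic in $\M^\times/\squares{\M}$ and its subgroups $(\M^\times)^{Q-1}$ and $\L^\times$ appearing in Lemma~\ref{lem_thm}. The payoff is the sharp $n=2m$ bound in (v), which feeds directly into the exponential count of non-isotopic semifields in the next section.
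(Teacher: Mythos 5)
Your overall strategy coincides with the paper's: verify Condition~\eqref{eq:condition} via Lemma~\ref{lem:centralizer_F1}, invoke Theorem~\ref{thm:equivalence} to reduce to monomial isotopisms, dispose of the $q'\in\{qQ,\overline{q}Q\}$ cases by Lemma~\ref{lem_thm}~(iii), exhibit the explicit Frobenius-by-$\overline{q}$ isotopism for (ii), and run the coefficient bookkeeping of Lemma~\ref{lem:centralizer_F1} with a $p^t$-twist for (iii), (v), (vi). All of that is sound and is essentially the paper's proof.

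The genuine gap is in part (iv), and it propagates into (i). The map $L=M=\diag(\id,-\id)$ does \emph{not} send $\P_{q,B,a}$ to $\P_{q,B,-a}$. Writing $(x,y)\ast(u,v)=(f,g)$ with
\[
g(x,y,u,v)=x^{qQ}v+yu^{qQ}+\tfrac{a}{B}\bigl(xv^{qQ}+y^{qQ}u\bigr),
\]
your observation that every monomial of $g$ acquires exactly one factor $-1$ under $(x,y,u,v)\mapsto(x,-y,u,-v)$ is correct, but it shows that the \emph{entire} second component is negated, i.e. $(x,-y)\ast(u,-v)=(f,-g)$. Composing with $N=\diag(\id,-\id)$ therefore yields an \emph{autotopism} of $\P_{q,B,a}$, not an isotopism onto $\P_{q,B,-a}$: passing from $a$ to $-a$ negates only the summand $\frac{a}{B}(xv^{qQ}+y^{qQ}u)$ while leaving $x^{qQ}v+yu^{qQ}$ intact, and no single linearized $N_4$ applied to $g$ can negate one pair of monomials but not the other. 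No combination of componentwise sign flips repairs this (flipping only $y$, or only $u$, destroys the first component). The actual mechanism in the paper is arithmetic, not a sign change: one compares the Frobenius twists $t$ and $t'=t+m/2$; since $B$ is a non-square, $B^{p^{t'}-p^t}=(B^{Q-1})^{p^t}$ is a $(Q-1)$-st power whose $(q+1)$-st root $\zeta$ satisfies $\zeta^{Q+1}=-1$, and feeding this into the analogue of Eq.~\eqref{eq:F1_almostlastequation} converts $a'$ into $-a'$. This same mechanism is what proves (i): the isotopisms with non-square $\omega_1$ (which are not strong) produce exactly the value $-a'$, and one must show that $-a'$ is \emph{also} reached by a strong isotopism with the shifted twist $t'$; your remark that ``a diagonal $L=M$ produces a strong isotopism'' does not address why every isotopy is matched by a strong one. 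As written, parts (i) and (iv) are unproven, and since (iv) feeds the factor-of-$2$ accounting in (v) and Corollary~\ref{cor:f1_equiv2}, this gap must be closed along the lines above.
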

\begin{proof}
Let $(N,L,M)$ be an isotopism between $\P_{q,B,a}$ and $\P_{q',B',a'}=(\M \times \M,+,\ast_2)$. All subfunctions of $N,L,M$ are zero or monomials by Theorem~\ref{thm:equivalence}. Moreover, $\P_{q,B,a}$ and $\P_{q',B',a'}$ can only be isotopic if $q'=q$, $q'=\overline{q}$, $q' = qQ$, or $q'=\overline{q}Q$. Note that if $m/\Gcd{k}{m}$ is odd, then $m/\Gcd{k+m/2}{m}$ is even by Lemma~\ref{lem_thm} (iii), so the cases $q' = qQ$, $q'=\overline{q}Q$ do not satisfy the conditions of Theorem~\ref{thm_family1} and need not be considered.\\

We first show the isotopy in the case $q'=\overline{q}$. We have
	\[(x,y)\ast_1(u,v) = (x^qu+xu^q+B(y^qv+yv^q),x^{qQ}v+yu^{qQ}+(a/B)(xv^{qQ}+y^{qQ}u)).\]
	A transformation with 
	\[
	N_1=x, N_4=(B^Q/a)x^Q, N_2=N_3=0
	\]
	and raising $x,y,u,v$ to the $\overline{q}$-th power yields
	\[
	N((x^{\overline{q}},y^{\overline{q}})\ast_1(u^{\overline{q}},v^{\overline{q}})) = (xu^{\overline{q}}+x^{\overline{q}}u+B(yv^{\overline{q}}+y^{\overline{q}}v),(B^Q/a)(xv^{\overline{q}Q}+y^{\overline{q}Q}u)+x^{\overline{q}Q}v+yu^{\overline{q}Q}). \]
	Observe that one can write $B^Q/a = a'/B$ for some $a'\in \L$ (indeed this is equivalent to $B^{Q+1} = aa'$ which has always a solution since $B^{Q+1} \in \L$). We conclude that there is always a strong isotopism between $\P_{q,B,a}$ and $\P_{\overline{q},B,B^{Q+1}/a}$. 
Thus we have proved Part (ii) of the theorem.\\

It thus only remains to deal with the case $q'=q$. By Theorem~\ref{thm:equivalence}, we only need to consider isotopisms $(N,L,M)$ with subfunctions 
\begin{align*}
& N_1 = a_1x^{p^{t}}, \quad N_4 = d_1x^{p^{t}},\\ 
& L_{1}=a_2x^{p^{t}},\quad L_{2}=b_2x^{p^{t}},\quad L_{3}=c_2x^{p^{t}},\quad L_{4}=d_2x^{p^{t}}, \\
& M_{1}=a_3x^{p^{t}},\quad M_{2}=b_3x^{p^{t}},\quad M_{3}=c_3x^{p^{t}},\quad M_{4}=d_3x^{p^{t}},
\end{align*}
for some $t \in \{0,\dots,m-1\}$. Then 
	\begin{align*}
		L(x,y)\ast_2 M(u,v) =& (((a_2'x+b_2'y)^{q}(a_3'u+b_3'v)+(a_2'x+b_2'y)(a_3'u+b_3'v)^{q})^{p^t}\\&+B'((c_2'x+d_2'y)^{q}(c_3'u+d_3'v)+(c_2'x+d_2'y)(c_3'u+d_3'v)^{q})^{p^t},\\&((a_2'x+b_2'y)^{qQ}(c_3'u+d_3'v)+(c_2'x+d_2'y)(a_3'u+b_3'v)^{qQ})^{p^t}\\&+(a'/B')((a_2'x+b_2'y)(c_3'u+d_3'v)^{qQ}+(c_2'x+d_2'y)^{qQ}(a_3'u+b_3'v))^{p^t}),
	\end{align*}
	where $a_i'=a_i^{p^{m-t}}$ and similarly for the other coefficients $b_i,c_i,d_i$.
	We also obtain 
		\[N((x,y)\ast_1(u,v)) = (a_1(x^qu+xu^q+B(y^qv+yv^q))^{p^{t}},d_1(x^{qQ}v+yu^{qQ}+(a/B)(xv^{qQ}+y^{qQ}u))^{p^{t}}).\]
		We compare the coefficients $(x^{q}u)^{p^{t}}$, $(xu^{q})^{p^{t}}$, $(x^{q}v)^{p^{t}}$, $(xv^{q})^{p^{t}}$, $(y^{q}u)^{p^{t}}$, $(yu^{q})^{p^{t}}$, $(y^{q}v)^{p^{t}}$, $(yv^{q})^{p^{t}}$ in the first component to get the following $8$ equations.	
		
 \noindent\begin{minipage}{0.5\textwidth}
\begin{align}
	a_1 &= a_2^qa_3+B'c_2^qc_3 \label{eq:F42_l1}\\
	a_1&=a_2a_3^q+B'c_2c_3^q \label{eq:F42_l2}\\	
	0 &= a_2^qb_3+B'c_2^qd_3\label{eq:F42_l3}\\
	0 &= a_2b_3^q+B'c_2d_3^q\label{eq:F42_l4} 
\end{align}
    \end{minipage}%
    \begin{minipage}{0.5\textwidth}
	\begin{align}
	0 &= b_2^qa_3+B'd_2^qc_3\label{eq:F42_r1} \\
	0 &= b_2a_3^q+B'd_2c_3^q\label{eq:F42_r2}\\
	B^{p^t}a_1 &= b_2^qb_3+B'd_2^qd_3 \label{eq:F42_r3}\\
	B^{p^t}a_1 &= b_2b_3^q+B'd_2d_3^q.\label{eq:F42_r4}
	\end{align}
    \end{minipage}\vskip1em
	And similarly the $8$ equations that come from comparing the coefficients in the second component:
	
 \noindent\begin{minipage}{0.5\textwidth}
\begin{align}
	0 &= a_2^{qQ}c_3+(a'/B')c_2^{qQ}a_3 \label{eq:F42_l12}\\
	0&=(a'/B')a_2c_3^{qQ}+c_2a_3^{qQ}\label{eq:F42_l22}\\	
	d_1 &= a_2^{qQ}d_3+(a'/B')c_2^{qQ}b_3\label{eq:F42_l32}\\
	d_1(a/B)^{p^t} &= (a'/B')a_2d_3^{qQ}+c_2b_3^{qQ}\label{eq:F42_l42} 
\end{align}
    \end{minipage}%
    \begin{minipage}{0.5\textwidth}
	\begin{align}
	d_1(a/B)^{p^t} &= b_2^{qQ}c_3+(a'/B')d_2^{qQ}a_3\label{eq:F42_r12} \\
	d_1 &= (a'/B')b_2c_3^{qQ}+d_2a_3^{qQ}\label{eq:F42_r22}\\
	0 &= b_2^{qQ}d_3+(a'/B')d_2^{qQ}b_3 \label{eq:F42_r32}\\
	0 &= (a'/B') b_2d_3^{qQ}+d_2b_3^{qQ}.\label{eq:F42_r42}
	\end{align}
    \end{minipage}\vskip1em
	Note that Eqs.~\eqref{eq:F42_l12}, \eqref{eq:F42_l22}, \eqref{eq:F42_r32}, \eqref{eq:F42_r42} are identical to Eqs.~\eqref{eq:F4_l12}, \eqref{eq:F4_l22}, \eqref{eq:F4_r32}, \eqref{eq:F4_r42} in the proof of Lemma~\ref{lem:centralizer_F1}, just with $a/B$ substituted by $(a'/B')$. We can thus conclude with the same reasoning as in the proof of Lemma~\ref{lem:centralizer_F1} that either $b_2=b_3=c_2=c_3=0$ or $a_2=a_3=d_2=d_3=0$. \\
	\textbf{Case $b_2=b_3=c_2=c_3=0$:} Here, we also proceed similarly to the proof of Lemma~\ref{lem:centralizer_F1}. We set $a_3 = \omega_1 a_2$, $d_3 = \omega_2 d_2$ and get from Eqs.~\eqref{eq:F42_l1}, \eqref{eq:F42_l2}, \eqref{eq:F42_r3}, \eqref{eq:F42_r4}

\[a_1 = a_2^{q+1}\omega_1 = a_2^{q+1} \omega_1^q = (B'/B^{p^t})d_2^{q+1} \omega_2 = (B'/B^{p^t})d_2^{q+1} \omega_2^q, \]
	which is satisfied if and only if $\omega_1,\omega_2 \in \E$ and 
	\begin{equation}
		\left(\frac{a_2}{d_2}\right)^{q+1} = \frac{\omega_2B'}{\omega_1B^{p^t}}. 
	\label{eq:F42_cent_case1}
	\end{equation}

	Similarly, from Eq.~\eqref{eq:F42_l32}, \eqref{eq:F42_l42}, \eqref{eq:F42_r12}, \eqref{eq:F42_r22}, we get immediately (using $\omega_1,\omega_2 \in \E$)
	\[d_1 = a_2^{qQ}d_2 \omega_2 = \frac{a'B^{p^t}}{a^{p^t}B'}a_2d_2^{qQ} \omega_2^{Q} = \frac{a'B^{p^t}}{a^{p^t}B'}a_2d_2^{qQ} \omega_1 = a_2^{qQ}d_2 \omega_1^Q.  \]
	This is equivalent to $\omega_1^Q = \omega_2$ and $(a_2/d_2)^{qQ-1} = \omega_2^{Q-1}(a'B^{p^t})/(a^{p^t}B')$. 
	Multiplying the second condition with Eq.~\eqref{eq:F42_cent_case1} gives
			\begin{equation}
		\left(\frac{a_2}{d_2}\right)^{q(Q+1)} = \frac{\omega_2^Qa'}{\omega_1a^{p^t}} = \frac{a'}{a^{p^t}}. 
				\label{eq:F1_almostlastequation}
		\end{equation}

		Using $\omega_1^Q = \omega_2$, we rewrite Eq.~\eqref{eq:F42_cent_case1}:
		\begin{equation}
					\left(\frac{a_2}{d_2}\right)^{q+1} = \frac{\omega_1^{Q-1}B'}{B^{p^t}}. 
		\label{eq:F1_lastequation}
		\end{equation}

	Observe that $B,B',t,\omega_1$ uniquely determine $(a_2/d_2)$ up to the sign from Eq.~\eqref{eq:F1_lastequation}. Since $(a_2/d_2)^{q(Q+1)} \in \L$, there is thus for each $B,B',t,\omega_1,a$ precisely one $a'$ that satisfies all conditions. For all $\omega_1$ that are squares (i.e., all $(q+1)^\textrm{st}$ powers), this $a'$ is the same since $\omega_1^{(Q-1)(Q+1)}=1$. Similarly, for all $\omega_1$ that are non-squares, we have $\omega_1^{(Q-1)(Q+1)/2}=-1$, so they also all yield the same $a'$, and in fact precisely the same $a'$ as when $\omega_1$ is a square, just with different sign. 
In particular, we conclude that a pre-semifield $\P_{q,B,a}$ is always isotopic 
to $\P_{q,B',a'}$ for arbitrary $B'$ and a suitable choice of $a'$. Since we can choose $\omega_1 = \omega_2 = 1$, we can even choose $a'$ such that the pre-semifields are strongly isotopic.
Thus, we have proved Part (iii) of our theorem.\\

 Consequently, it is enough to consider 
isotopisms in the case $B=B'$ for an arbitrary non-square $B$. 
When $B=B'$, every possible choice of $t$ yields an $a'$ such that a pre-semifield 
$\P_{q,B,a}$ is strongly isotopic to $\P_{q,B,a'}$ and isotopic to $\P_{q,B,-a'}$. 
Assume the choice of $t$ in the previously described procedure leads to a 
strong isotopy between $\P_{q,B,a}$ and $\P_{q,B,a'}$. We now show that choosing 
$t'$ defined by $t'-t \equiv m/2 \pmod m$ in the same procedure gives a strong 
isotopy to $\P_{q,B,-a'}$, i.e. $\P_{q,B,a}$ and $\P_{q,B,-a'}$ are not just 
isotopic but also strongly isotopic. 
	
	Let $(a_2/d_2)^{q+1}$ be determined by $\omega_1=1$ and fixed $B=B'$, $t$ via Eq.~\eqref{eq:F1_lastequation}, i.e.
	\[ \left(\frac{a_2}{d_2}\right)^{q+1} = \frac{1}{B^{p^t-1}}.\]
	Similarly, let $(a_2'/d_2')^{q+1}$ be determined by $\omega_1=1$, the same $B=B'$ and $t'$:
		\[ \left(\frac{a_2'}{d_2'}\right)^{q+1} = \frac{1}{B^{p^{t'}-1}}.\]
	We then have 
	\[\left(\frac{a_2'}{d_2'}\right)^{q+1} = \left(\frac{a_2}{d_2}\right)^{q+1} \cdot \frac{1}{B^{p^{t'}-p^t}}.\]
	Since $B^{p^{t'}-p^t}=(B^{Q-1})^{p^t} \in (\M^\times)^{Q-1}$, we have $(a_2'/d_2')^{q+1} = (\zeta a_2/d_2)^{q+1}$ where $\zeta^{q+1}=1/(B^{p^{t'}-p^t}) \in (\M^\times)^{Q-1}$. Note that $\zeta \notin (\M^\times)^{Q-1}$ since $B$ is a non-square, so $B^{(Q-1)(Q+1)/2} \neq 1$. In particular, we have $\zeta^{Q+1}=-1$. Then by Eq.~\eqref{eq:F1_almostlastequation}, $(a_2'/d_2')^{q(Q+1)} =\zeta^{q(Q+1)} (a_2/d_2)^{q(Q+1)} =-(a_2/d_2)^{q(Q+1)}=-a'/a^{p^t}$. We conclude that $\P_{q,B,a}$ and $\P_{q,B,a'}$ are strongly isotopic if and only if $\P_{q,B,a}$ and $\P_{q,B,-a'}$ are strongly isotopic. \\
	\textbf{Case $a_2,a_3,d_2,d_3=0$:}	Similarly to the previous case, we set $b_3 =\omega_1 b_2$, $c_3 = \omega_2 c_2$. Since we know from the previous case that different $B,B'$ always lead to strongly isotopic pre-semifields (for suitable choices of $a,a'$), we only consider the case $B=B'$ without loss of generality. We get from the first set of equations:
		\[a_1 = Bc_2^{q+1}\omega_2 = Bc_2^{q+1} \omega_2^q = (1/B^{p^t})b_2^{q+1} \omega_1 = (1/B^{p^t})b_2^{q+1} \omega_1^q, \]
	which is equivalent to $\omega_1,\omega_2 \in \E$ and 
	\begin{equation}
		\left(\frac{b_2}{c_2}\right)^{q+1} = B^{p^t+1}\cdot \frac{\omega_2}{\omega_1}.
	\label{eq:F42_cent_case2}
	\end{equation}
	The second set of equations gives
		\[d_1 = (a'/B)c_2^{qQ}b_2 \omega_1 = (B/a)^{p^t}c_2b_2^{qQ} \omega_1^{Q} = (B/a)^{p^t}c_2b_2^{qQ} \omega_2 = (a'/B)c_2^{qQ}b_2 \omega_2^{Q}.  \]
	This again implies $\omega_1^Q = \omega_2$ and $(b_2/c_2)^{qQ-1} = (1/\omega_1^{Q-1})(a/B)^{p^t}(a'/B)$. Multiplying this with Eq.~\eqref{eq:F42_cent_case2} gives
	\[\left(\frac{b_2}{c_2}\right)^{q(Q+1)} = \frac{\omega_2}{\omega_1^Q} a^{p^t}a' = a^{p^t}a'. \]
	Eq.~\eqref{eq:F42_cent_case2} can be rewritten as
	\[\left(\frac{b_2}{c_2}\right)^{q+1} = \omega_1^{Q-1}B^{p^t+1}. \]
	These two equations are structurally identical to Eqs.~\eqref{eq:F1_almostlastequation} and \eqref{eq:F1_lastequation} from the previous case. With the same argumentation, we conclude that every possible choice of $t$ yields an $a'$ such that a pre-semifield $\P_{q,B,a}$ is strongly isotopic to $\P_{q,B,a'}$ and isotopic to $\P_{q,B,-a'}$. Again, choosing $t'$ such that $t'-t \equiv m/2 \pmod m$ gives also strong isotopy between $\P_{q,B,a}$ and $\P_{q,B,-a'}$. 
This proves Parts (iv) and (i) of our theorem.
Now we can simply prove Parts (v) and (vi). Considering both cases together, there are thus at most $2m=n$ different $a'$ such that $\P_{q,B,a}$ is strongly isotopic to $\P_{q,B,a'}$. 
We have considered all cases thus there are no more isotopisms. 
\end{proof}

\begin{remark}[Planar equivalence and strong isotopy]

Instead of the exposition we chose based on isotopy, we could have developed an approach 
based on (in)equivalences of planar $(q,r)$-biprojective mappings.
Recall that Theorem~\ref{thm:eaequiv} states that strong isotopy of pre-semifields 
corresponds to equivalence of the corresponding planar mappings. 
We give a very brief sketch of such an approach: One can define the 
automorphism group $\Aut(F)$ of a planar mapping $F$ 
of $\F \cong \M \times \M$ from a DO polynomial 
as the set of all $(N,L) \in \GL(\F)^2$ such that $N F  L^{-1} = F$. 
Note that, by Theorem~\ref{thm:eaequiv}, $\Aut(F) \cong \Aut_S(\P)$, where 
$\Aut_S(\P)$ is the group of all strong autotopisms of the pre-semifield $\P$ 
belonging to $F$. It is then clear (identically to Lemma~\ref{lem_first}) that the set 
\[
Z_F^{(q,r)} = \{(\diag(m_{a^{q+1}},m_{a^{r+1}}), 
                \diag(m_a,m_a)) \colon a \in \M^\times\} \cong Z^{(q,r)}
\]
is a subgroup of $\Aut(F)$. The same group theoretic machinery can then be applied, 
with $Z_F^{(q,r)}$ and $\Aut(F)$ taking the role of $Z^{(q,r)}$ and $\Aut(\P)$ in 
the approach we presented,  
proving an analogue of Theorem~\ref{thm:equivalence}, 
with the conclusion that all subfunctions of $N$ and $L$
are zeros or monomials. 

Then, the planar mappings from Family $\Family 1$ can be tested for equivalence similar to 
Theorem~\ref{thm:f1_equiv} by comparing the coefficients of the polynomial equation $N F = F L$. One obtains the same set of equations as Eqs.~\eqref{eq:F42_l1}-\eqref{eq:F42_r42}, just with the simplification that $M = L$. Then the same argumentation of the proof of Theorem~\ref{thm:f1_equiv} can be followed, with the result that  the only possible equivalences that need to be considered are equivalences via $N,L$ where the subfunctions are
\[N_1 = a_1 x^{p^t}, \quad N_2 = N_3 = 0, \quad N_4 = d_1 x^{p^t}\]
and either
\[L_1 = a_2 x^{p^t}, \quad L_2 = L_3 = 0, \quad L_4 = d_2 x^{p^t}\]
or 
\[L_1 = 0, \quad L_2 = b_2 x^{p^t}, \quad L_3 = c_2 x^{p^t}, \quad L_4 = 0.\]
The conditions on the coefficients are then identical to the ones in the proof of Theorem~\ref{thm:f1_equiv} (e.g. Eq.~\eqref{eq:F1_almostlastequation} and~\eqref{eq:F1_lastequation}), just with $\omega_1=\omega_2=1$. This way, one obtains the same result as Theorem~\ref{thm:f1_equiv}, except that one only gets information on \emph{strong} isotopy and not regular isotopy.

Since an isotopy class of a commutative semifields contains at most $2$ strong-isotopy
classes~\cite[Theorem 2.6.]{coulterhenderson}, this approach would suffice to prove
the exponential count. With some more effort, the planar mapping approach can also
be used to find all isotopisms between commutative semifields (not just strong isotopisms). 
Indeed, by~\cite[Theorem 2.6.]{coulterhenderson}, if two commutative semifields with 
corresponding planar mapping $F : x \mapsto x \ast_1 x$ and $G : x \mapsto x\ast_2 x$ 
are isotopic, then either $F$ and $G$ are equivalent (the semifields are then strongly 
isotopic) or $F$ is equivalent to $G' : x \mapsto x \ast_2(a \ast_2 x)$ where 
$a$ is an arbitrary non-square element 
in the middle nucleus of the semifield. So in order to settle the isotopy question one 
could check equivalence between $F$ and both $G$ and $G'$, yielding an alternative proof 
of Theorem~\ref{thm:equivalence}. The two approaches are essentially equivalent and 
require similar effort.


The isotopy approach we chose has the advantage that it can be extended naturally to 
non-commutative semifields where the connection to planar mappings does not exist. 

\end{remark}

The number of distinct isotopy classes can now be counted.

\begin{corollary} \label{cor:f1_equiv2}
 Let $N_{\mathcal{S}}(p,n)$ be the number of non-isotopic pre-semifields in 
 Family $\mathcal{S}$ on $\F_p^n$. Then
\[
\frac{\sigma(n)-1}{2}\cdot \frac{p^{n/4}-1}{n}\leq N_{\mathcal{S}}(p,n) \leq \frac{\sigma(n)-1}{2}\left(p^{n/4}-1\right).\]
\end{corollary}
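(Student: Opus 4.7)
The strategy is to translate Theorem~\ref{thm:f1_equiv} into a straightforward count of the parameter space of $\Family 1$. By part~(iii) of that theorem, fixing once and for all a single non-square $B\in\M^\times\setminus\squares{\M}$ is harmless: every isotopy class in $\Family 1$ contains a representative of the form $\P_{q,B,a}$ for this $B$. The task is therefore reduced to counting equivalence classes of admissible pairs $(q,a)$.

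First I enumerate the admissible exponents $q=p^k$. The conditions $1\le k\le m-1$ and $m/\Gcd{k}{m}$ odd are equivalent to $\nu_2(k)\ge\nu_2(m)$, so writing $k=2^{\nu_2(m)}k'$ gives $k'\in\{1,\dots,\sigma(m)-1\}$; since $\sigma(m)=\sigma(n)$, this produces exactly $\sigma(n)-1$ admissible values. Part~(ii) of Theorem~\ref{thm:f1_equiv} identifies $\P_{q,B,a}$ with $\P_{\qbar,B,B^{Q+1}/a}$, pairing the admissible $k$ under the involution $k\mapsto m-k$. The only candidate fixed point $k=m/2$ is excluded, since then $m/\Gcd{k}{m}=2$ is even. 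As $\sigma(n)$ is odd, the $\sigma(n)-1$ admissible exponents split cleanly into $(\sigma(n)-1)/2$ unordered pairs $\{q,\qbar\}$; by part~(vi), pre-semifields coming from distinct pairs are non-isotopic, so these contribute independently.

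Next I bound the number of isotopy classes inside a fixed pair $\{q,\qbar\}$. Parts~(i), (v) and~(vi) together say that, with $B$ and $q$ fixed, the isotopy classes on $a\in\L^\times$ are exactly the orbits of a finite group action whose orbits have size at most $2m=n$. Since $|\L^\times|=p^{m/2}-1=p^{n/4}-1$, the number of orbits per pair therefore lies in the interval
\[
\left[\frac{p^{n/4}-1}{n},\ p^{n/4}-1\right].
\]
Multiplying this interval by the $(\sigma(n)-1)/2$ pairs $\{q,\qbar\}$ gives the claimed two-sided estimate on $N_{\Family 1}(p,n)$.

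The substantive content is already carried by Theorem~\ref{thm:f1_equiv}; the corollary itself is pure parameter bookkeeping. The only place where genuine care is needed is verifying that the involution $k\leftrightarrow m-k$ has no admissible fixed point, which is the one-line observation $\nu_2(m/2)<\nu_2(m)$, and checking that $\sigma(m)-1$ indeed counts the valid $k$'s. No calculation beyond this is required.
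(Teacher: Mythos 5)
Your proposal is correct and follows essentially the same route as the paper: invoke Theorem~\ref{thm:f1_equiv} to fix $B$, pair the $\sigma(n)-1$ admissible exponents $q$ into $(\sigma(n)-1)/2$ classes $\{q,\qbar\}$ with no fixed point, and bound the number of $a$-classes per pair between $(p^{n/4}-1)/n$ and $p^{n/4}-1$. Your write-up merely makes explicit the bookkeeping (the count of admissible $k$ via $\nu_2(k)\ge\nu_2(m)$ and the exclusion of $k=m/2$) that the paper leaves implicit.
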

\begin{proof}
This follows directly from Theorem~\ref{thm:f1_equiv} (i),(ii),(iv) and (v): There are $\sigma(n)-1$ admissible values for $q$, and only $q,\qbar$ yield isotopic pre-semifields. Then there are $p^{n/4}-1$ admissible values for $a$, with at most $n$ of them yielding isotopic pre-semifields.
\end{proof}

In particular, $\mathcal{S}$ is the first known family of commutative 
(pre-)semifields that yields exponentially many non-isotopic (pre-)semifields. 
Since non-isotopic pre-semifields lead to inequivalent planar mappings 
(see Theorem~\ref{thm:eaequiv}), this also shows that the number of inequivalent 
planar mappings grows exponentially in $n$.
\begin{corollary}\label{cor:exponential}
The number of non-isotopic commutative semifields of order $p^n$ and
the number of inequivalent planar DO mappings of $\f{p^n}$ are exponential in $n$ 
for a fixed odd prime $p$ and $n$ divisible by $4$. 
\end{corollary}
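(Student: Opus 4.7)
The plan is to extract Corollary~\ref{cor:exponential} directly from the lower bound in Corollary~\ref{cor:f1_equiv2}, with a small observation to handle the degenerate case, and then transfer the result to planar DO mappings through Theorem~\ref{thm:eaequiv}.

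First I would invoke Corollary~\ref{cor:f1_equiv2} to write
\[
N_{p^n} \ \geq\ N_{\mathcal{S}}(p,n) \ \geq\ \frac{\sigma(n)-1}{2}\cdot \frac{p^{n/4}-1}{n}.
\]
The only place where this bound can fail to be exponential is when $\sigma(n) = 1$, i.e. when $n$ is itself a power of $2$. But in that case $m = n/2$ is also a power of $2$ and the requirement ``$m/e$ odd'' for a proper divisor $e = \Gcd{k}{m}$ of $m$ cannot be met, so Family $\Family{1}$ is empty and the bound degenerates to $0$. Thus the interesting regime is precisely $n = 4t$ with $t$ not a power of $2$ (matching the hypothesis of the abstract), and in this regime $\sigma(n) \ge 3$, so $(\sigma(n)-1)/2 \ge 1$ and we obtain
\[
N_{p^n} \ \geq\ \frac{p^{n/4}-1}{n},
\]
which is exponential in $n$ for any fixed odd prime $p$.

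For the second assertion, I would appeal to Theorem~\ref{thm:eaequiv}: two planar DO polynomials are equivalent (in the sense of this paper) if and only if the associated commutative pre-semifields are \emph{strongly} isotopic. Since strong isotopy refines isotopy, non-isotopic pre-semifields are in particular not strongly isotopic, so their associated planar DO polynomials are inequivalent. The number of equivalence classes of planar DO mappings on $\f{p^n}$ is therefore at least $N_{p^n}$, and inherits the same exponential lower bound.

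The main point to highlight is that no real obstacle remains at this stage: the substantive work has been packaged into Theorem~\ref{thm_family1} (planarity of the family), Theorem~\ref{thm:equivalence} together with Lemma~\ref{lem:centralizer_F1} (the Sylow-theoretic classification of isotopisms and the verification of Condition~\eqref{eq:condition}), and Theorem~\ref{thm:f1_equiv} (which feeds into Corollary~\ref{cor:f1_equiv2}). Granted these, Corollary~\ref{cor:exponential} is a one-line arithmetic estimate on $(\sigma(n)-1)(p^{n/4}-1)/(2n)$ plus the elementary observation that strong-isotopy classes refine isotopy classes.
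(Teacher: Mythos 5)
Your proposal is correct and follows essentially the same route as the paper: the exponential count is read off from Corollary~\ref{cor:f1_equiv2}, and the transfer to planar DO mappings uses Theorem~\ref{thm:eaequiv} via the observation that non-isotopic pre-semifields yield inequivalent planar mappings. Your extra remark isolating the degenerate case $\sigma(n)=1$ (where Family $\Family{1}$ is empty) is a welcome clarification of the hypothesis but does not change the argument.
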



\section{The nuclei} \label{sec_nuclei}

In this section we will compute the nuclear parameters of Family $\Family 1$. As explained
in Section \ref{sec_prem}, the nuclei are defined for semifields and not for pre-semifields. However,
the nuclei of the isotopic semifield can be computed using the following theorem of 
Marino and Polverino \cite[Theorem 2.2]{MP12} (we give the commutative version of their general 
theorem) that allows computing the nuclei directly from the pre-semifield.  

Let $\P = (\f{p}^n,+,\ast)$ be a commutative pre-semifield with right 
multiplication defined as
\[
	R_U : X \mapsto X \ast U, \quad \textrm{for } U \in \f{p}^n.
\]
Then the \textbf{spread set} associated to $\P$ is defined as
\[
	\ML = \{ R_U : U \in \f{p}^n \}.
\]
In the following 
$\N_j(\P)$ denotes the corresponding nucleus of the semifield isotopic to $\P$,
for $j \in \{l,m,r\}$.

\begin{theorem}\cite[Theorem 2.2]{MP12}\label{MP}
Let $\MN_0,\MN_1 \subset \End(\f{p}^n)$ be the largest sets (and then necessarily fields) such that
\[
	\ML \MN_0 \subseteq \ML \textrm{ and }  \MN_1 \ML \subseteq \ML.
\]
Then
\[
	\N_m(\P) \cong \MN_0 \textrm{ and } \N_l(\P) = \N_r(\P) \cong \MN_1.
\]
\end{theorem}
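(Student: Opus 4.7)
The plan is to reduce to the case where $\P$ is itself a semifield and then construct explicit ring isomorphisms. Kaplansky's trick converts $\P=(\f{p}^n,+,\ast)$ into an isotopic semifield $\S=(\f{p}^n,+,\star)$ with identity $\epsilon=e\ast e$, and the defining relation $(x\ast e)\star(e\ast y)=x\ast y$ yields $R_y=R'_{e\ast y}R_e$, where $R'_U$ denotes right multiplication in $\S$. Thus the two spread sets satisfy $\ML=\ML^{\S}R_e$ for a fixed $\f{p}$-linear bijection $R_e$. The condition $\ML\MN_0\subseteq\ML$ then becomes $\ML^{\S}(R_e\MN_0 R_e^{-1})\subseteq\ML^{\S}$, so the largest $\MN_0$ for $\P$ is the $R_e$-conjugate of the largest for $\S$. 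Since conjugation by an invertible element is a ring automorphism of $\End(\f{p}^n)$, these two algebras are isomorphic as rings; the same reasoning handles $\MN_1$ (even more simply, since the corresponding identity $\MN_1\ML\subseteq\ML$ is unaffected by right-multiplication by $R_e$). Hence it suffices to prove the theorem for semifields.

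Assume now that $\S=(\f{p}^n,+,\star)$ is a semifield with identity $\epsilon$ and spread set $\ML^{\S}=\{R'_U:U\in\f{p}^n\}$. For the middle nucleus, define $\phi_m:\N_m(\S)\to\End(\f{p}^n)$ by $\phi_m(y)=R'_y$. The middle-associativity identity $(x\star y)\star z=x\star(y\star z)$ rewrites as $R'_z R'_y=R'_{y\star z}$, so for every $y\in\N_m(\S)$ we have $\ML^{\S} R'_y\subseteq\ML^{\S}$, hence $R'_y\in\MN_0$. Conversely, given $N\in\MN_0$, the fact that $R'_\epsilon=\Id\in\ML^{\S}$ forces $N=R'_\epsilon N\in\ML^{\S}$, so $N=R'_y$ for some unique $y$; evaluating $R'_z R'_y=R'_{w(z)}$ at $\epsilon$ pins down $w(z)=y\star z$, and middle associativity follows pointwise, showing $y\in\N_m(\S)$. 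Additivity of $\phi_m$ is clear, and $R'_{y_1\star y_2}=R'_{y_2}R'_{y_1}$ when $y_1\in\N_m(\S)$, so $\phi_m$ is a ring anti-isomorphism; since $\N_m(\S)$ is a field, hence commutative, $\phi_m$ is actually an isomorphism, and $\MN_0\cong\N_m(\S)$ is necessarily a field.

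The left/right case runs symmetrically. Commutativity of $\star$ gives $\N_l(\S)=\N_r(\S)$, and for $x\in\N_r(\S)$ the identity $(y\star z)\star x=y\star(z\star x)$ rewrites as $R'_x R'_z=R'_{z\star x}$, so $R'_x\ML^{\S}\subseteq\ML^{\S}$ and $R'_x\in\MN_1$. For the reverse inclusion, any $N\in\MN_1$ satisfies $N=N R'_\epsilon\in\ML^{\S}$, hence $N=R'_x$ for some $x$; evaluating $R'_x R'_z=R'_{g(z)}$ at $\epsilon$ gives $g(z)=z\star x$, which places $x$ in $\N_r(\S)$. The map $x\mapsto R'_x$ is then a ring isomorphism $\N_r(\S)\cong\MN_1$, and $\MN_1$ is a field.

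The main delicate point is the reduction step: one must verify carefully that the spread sets of $\P$ and its Kaplansky isotope differ only by composition with a fixed bijection, and that the resulting inner automorphism of $\End(\f{p}^n)$ preserves both the ``largest set'' characterization and the ring structure on $\MN_0$ and $\MN_1$. After that, each inclusion at the semifield level is a routine translation between the associativity axioms defining the nuclei and the module-closure conditions defining $\MN_0$ and $\MN_1$.
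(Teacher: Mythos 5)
Your proof is correct. Note, however, that the paper does not prove this statement at all: it is quoted verbatim from Marino--Polverino \cite[Theorem 2.2]{MP12} and used as a black box, so there is no internal argument to compare against. What you have supplied is a complete, self-contained proof, and it is essentially the natural one. The two key points are exactly the ones you isolate: (1) the reduction via Kaplansky's trick, which works because $\ML=\ML^{\S}R_e$ turns $\ML\MN_0\subseteq\ML$ into the conjugated condition $\ML^{\S}(R_e\MN_0R_e^{-1})\subseteq\ML^{\S}$ and leaves $\MN_1\ML\subseteq\ML$ untouched, so both algebras change only by a ring isomorphism while the nuclei (being isotopy invariants, as the paper's conventions require) do not change at all; and (2) the fact that $R'_\epsilon=\Id\in\ML^{\S}$, which forces every element of $\MN_0$ (resp. $\MN_1$) to lie in the spread set itself and hence be of the form $R'_y$, after which evaluation at $\epsilon$ converts the closure conditions $\ML^{\S}R'_y\subseteq\ML^{\S}$ and $R'_x\ML^{\S}\subseteq\ML^{\S}$ precisely into the middle- and right-associativity axioms. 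Your handling of the anti-isomorphism $R'_{y_1\star y_2}=R'_{y_2}R'_{y_1}$, resolved by commutativity of the nucleus, is also right, and this is what makes the ``necessarily fields'' parenthetical in the statement come out. The only points you leave implicit are harmless: the existence of a unique \emph{largest} $\MN_0$ (the property $\ML\MN\subseteq\ML$ is preserved under unions, so the largest such set is simply $\{N:\ML N\subseteq\ML\}$), and the fact that the Kaplansky isotope of a commutative pre-semifield is again commutative, which the paper records in Section 2 and which you need for $\N_l(\S)=\N_r(\S)$. Your proof is also consistent with the composition convention the paper actually uses when it applies the theorem in Section 7, so the assignment of $\MN_0$ to the middle nucleus and $\MN_1$ to the left/right nucleus is the correct one.
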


Now, let $\P = (\M \times \M,+,\ast)$ be a pre-semifield in Family $\Family 1$. 
Then $\ML = \{ R_{u,v} : (u,v) \in \M \times \M \}$, where
\[
	R_{u,v} : (x,y) \mapsto (R^{(1)}_{u,v}(x,y),R^{(2)}_{u,v}(x,y)),
\]
with
\[
	R^{(1)}_{u,v}(x,y) = x^qu+xu^q+B(y^qv+yv^q) \textrm{ and } R^{(2)}_{u,v}(x,y) =  x^rv+Axv^r+Ay^ru+yu^r.
\]
We write again $L \in \End(\M \times \M)$  as 
$L : (x,y) \mapsto (\aa(x)+\bb(y),\cc(x)+\dd(y))$,
where $\aa,\bb,\cc,\dd \in \End(\M)$.

\begin{theorem}\label{thm_nucleus}
The left, middle and right nuclei $\N_l(\P),\N_m(\P),\N_r(\P)$ satisfy $\N_l(\P) = \N_r(\P) \cong \D$ and $\N_m(\P) \cong \E$.
\end{theorem}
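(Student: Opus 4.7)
The plan is to invoke Theorem~\ref{MP} of Marino and Polverino, which reduces the computation of the nuclei of $\P$ to determining the largest subfields $\MN_0,\MN_1\subseteq\End(\M\times\M)$ of endomorphisms satisfying $\ML\MN_0\subseteq\ML$ (giving $\N_m(\P)\cong\MN_0$) and $\MN_1\ML\subseteq\ML$ (giving $\N_l(\P)=\N_r(\P)\cong\MN_1$), where $\ML=\{R_{u,v}:(u,v)\in\M\times\M\}$ with $R_{u,v}\colon (x,y)\mapsto(x,y)\ast(u,v)$.

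First, I will exhibit the expected subfields explicitly. For $\MN_0$, take $L_\nu(x,y)=(\nu x,\nu^Q y)$ for $\nu\in\E$; using $\nu^q=\nu$ (since $\E=\f{q}\cap\M$) and hence $\nu^r=\nu^{Qq}=\nu^Q$, a direct expansion of $R_{u,v}\circ L_\nu$ via the biprojective multiplication yields $R_{u,v}\circ L_\nu=R_{\nu u,\nu^Q v}$, so $\nu\mapsto L_\nu$ provides a field embedding $\E\hookrightarrow\MN_0$. For $\MN_1$, take $L_\mu(x,y)=(\mu x,\mu y)$ for $\mu\in\D=\f{r}\cap\M$; Lemma~\ref{lem_thm}(iii) gives $d\mid k$, so $\mu^q=\mu^r=\mu$, and direct expansion yields $L_\mu\circ R_{u,v}=R_{\mu u,\mu v}$, hence $\D\hookrightarrow\MN_1$.

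The main work is to show these embeddings are equalities. Writing an arbitrary $L\in\End(\M\times\M)$ as $L(x,y)=(\alpha(x)+\beta(y),\,\gamma(x)+\delta(y))$ with $\alpha,\beta,\gamma,\delta\in\End(\M)$ expressed as linearized polynomials, I would expand both sides of the defining equation as elements of $\M[x,y,u,v]$ and compare coefficients monomial by monomial. The key structural observation is that the right-hand side is a $(q,r)$-biprojective expression, so in each of the four variable pairs $(x,u), (x,v), (y,u), (y,v)$ one of the two variable's degrees is restricted to the small set $\{1,p^k,p^l\}$. By contrast, the left-hand side's Frobenius-shifted monomials (of the form $x^{p^{i+k}}u^{p^i}$ from $\alpha(x^qu)$, $y^{p^{i+l}}v^{p^i}$ from $\delta(y^rv)$, and so on) have degrees that range over all $p^i$, $0\le i<m$. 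Matching monomials then forces most coefficients to vanish: for $\MN_0$ this first kills the off-diagonal pieces $\beta=\gamma=0$ (their contributions lie in bidegrees $(y,u), (x,v)$ absent from the first component of the right-hand side), and then pins down $\alpha(x)=a_0 x$ and $\delta(y)=d_0 y$, with the $q$-component imposing $a_0^q=a_0$ (so $a_0\in\E$) and the $r$-component imposing $d_0=a_0^r=a_0^Q$; for $\MN_1$, a symmetric analysis yields $\alpha(x)=a_0 x$, $\delta(y)=d_0 y$, $\beta=\gamma=0$, together with $a_0=d_0$ and $a_0^q=a_0^r=a_0$, forcing $a_0\in\f{q}\cap\f{r}\cap\M=\D$.

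The main obstacle will be the careful monomial bookkeeping used to rule out coefficients beyond $i=0$. A recurring subtlety is that for a single Frobenius index $i$ the shifted monomial may appear to match some isolated monomial on the right-hand side, and one must exhibit a compatibility obstruction by simultaneously checking both pre-semifield components (the $q$-shift in the first and the $r$-shift in the second), so that the index set surviving in both components collapses to $\{0\}$. The rigidity that pins the answer down comes from the assumption $0<k<m/2$ combined with $m/e$ odd: this excludes the degenerate values $k\in\{0,m/4,m/2,3m/4\}$ for which the matching could admit spurious nonzero coefficients. In this sense the biprojective structure of Family $\Family 1$, together with the arithmetic of the tower $\D\subset\E\subset\M$ captured by Lemma~\ref{lem_thm}, is precisely what pins the nuclei down to $\D$ and $\E$.
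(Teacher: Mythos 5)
Your overall strategy is the paper's: invoke Theorem~\ref{MP}, exhibit the candidate fields inside $\MN_0$ and $\MN_1$, and prove maximality by writing $L(x,y)=(\alpha(x)+\beta(y),\gamma(x)+\delta(y))$ as linearized polynomials and comparing coefficients. Your explicit embeddings are correct (one can check directly that $R_{u,v}\circ L_\nu = R_{\nu u,\nu^Q v}$ for $\nu\in\E$ and $L_\mu\circ R_{u,v}=R_{\mu u,\mu v}$ for $\mu\in\D$), and your treatment of $\MN_1$, where the bidegree bookkeeping really does force $\beta=\gamma=0$ and then $\alpha=\delta=$ multiplication by an element of $\f{q}\cap\f{r}\cap\M=\D$, matches the paper.

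There is, however, a genuine gap in the middle-nucleus half. You claim the off-diagonal pieces of $L\in\MN_0$ die ``because their contributions lie in bidegrees $(y,u),(x,v)$ absent from the first component of the right-hand side.'' That is not true: the right-hand side is $R^{(1)}_{w,t}(x,y)=x^qw+xw^q+B(y^qt+yt^q)$ where $(w,t)$ is an \emph{unknown} ($\F_p$-linear, but a priori not diagonal) function of $(u,v)$, so the term $x^qw$ can perfectly well carry a $v$-dependence. Degree comparison in $x$ and $y$ only forces $\gamma(x)=c_0x$ and $\beta(y)=b_0y$ to be scalar multiplications; it does not force $b_0=c_0=0$. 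Indeed, matching coefficients with $c_0\neq 0$ gives $w=a_0^qu+Bc_0^qv$ and $w^q=a_0u^q+Bc_0v^q$, hence $c_0^{q^2-1}=B^{1-q}$, and the second component gives $c_0^{r^2-1}=(B/a)^{r+1}$; since $r^2\equiv q^2\pmod{p^m-1}$ these combine to $B^{q(Q+1)}=a^{r+1}$, which is impossible because $B^{Q+1}$ is a non-square of $\L$ while $a^{r+1}=a^{q+1}$ is a square. This arithmetic step — which is where the defining hypotheses $B\notin\squares{\M}$ and $a\in\L^\times$ of Family $\Family{1}$ actually enter — is the substance of the paper's proof that $\N_m(\P)\cong\E$ rather than something larger, and your proposal omits it entirely. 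Without it you cannot exclude an ``antidiagonal'' part of $\MN_0$, and the conclusion $\MN_0\cong\E$ does not follow.
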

\begin{proof}
We have nonzero $L \in \MN_1$, if and only if, 
for every $(u,v) \in \M \times \M$ there exists $(w,t) \in \M \times \M$ such that
\begin{align*}
\aa R^{(1)}_{u,v} + \bb R^{(2)}_{u,v} &= R^{(1)}_{w,t}, \textrm{ and},\\
\cc R^{(1)}_{u,v} + \dd R^{(2)}_{u,v} &= R^{(2)}_{w,t},
\end{align*}
that is
\begin{align*}
\aa(x^qu+xu^q+B(y^qv+yv^q)) + \bb(x^rv+Axv^r+Ay^ru+yu^r) &= x^qw+xw^q+B(y^qt+yt^q), \textrm{ and},\\
\cc(x^qu+xu^q+B(y^qv+yv^q)) + \dd(x^rv+Axv^r+Ay^ru+yu^r) &= x^rt+Axt^r+Ay^rw+yw^r.
\end{align*}
This implies (after a routine comparison of degrees of $x,y,u,v$ as in previous sections) that $ \bb = \cc = 0$ and
$\aa(x) = z_1 x$ and $\dd(x) = z_4 x$ for some $z_1,z_4 \in \M^\times$. Now, the above equations become
\begin{align*}
z_1(x^qu+xu^q+B(y^qv+yv^q)) &= x^qw+xw^q+B(y^qt+yt^q), \textrm{ and},\\
z_4(x^rv+Axv^r+Ay^ru+yu^r)  &= x^rt+Axt^r+Ay^rw+yw^r,
\end{align*}
or (the case $uv = 0$ is easy to see)
\begin{align*}
z_1 &= w/u = (w/u)^q = t/v = (t/v)^q, \textrm{ and},\\
z_4 &= w/u = (w/u)^r = t/v = (t/v)^r.
\end{align*}
Thus for every $(u,v) \in \M \times \M$ there exists $(w,t) \in \M \times \M$ if and only if 
$z_4 = z_1 = z_1^q = z_1^r$ if and only if $z_1 \in \f{q} \cap \f{r} \cap \M = \D$. 
That is to say $L \in \MN_1$ if and only if $L(x,y)=(zx,zy)$ for $z \in \D$.
Now Theorem \ref{MP} implies $\N_l(\P) = \N_r(\P) \cong \D$.

Similarly for the middle nucleus, nonzero $L \in \MN_0$, if and only if,
\begin{align*}
R^{(1)}_{u,v}(\aa(x)+\bb(y),\cc(x)+\dd(y)) &= R^{(1)}_{w,t}(x,y), \textrm{ and},\\
R^{(2)}_{u,v}(\aa(x)+\bb(y),\cc(x)+\dd(y)) &= R^{(2)}_{w,t}(x,y),
\end{align*}
that is
\begin{align*}
(\aa(x)+\bb(y))^qu+(\aa(x)+\bb(y)) u^q + B(\cc(x)+\dd(y))^qv + B(\cc(x)+\dd(y)) v^q &= x^qw+xw^q+B(y^qt+yt^q), \textrm{ and},\\
(\aa(x)+\bb(y))^rv + A(\aa(x)+\bb(y)) v^r +A (\cc(x)+\dd(y))^ru + (\cc(x)+\dd(y)) u^r &= x^rt+Axt^r+Ay^rw+yw^r.
\end{align*}
This implies (after a routine comparison of degrees of $x,y,u,v$) that 
$\aa(x) = z_1 x, \bb(y) = z_2 y, \cc(x) = z_3 x$ and $\dd(y) = z_4 y$ for $z_1,z_2,z_3,z_4 \in \M$.
Now, the $x$-part of the first of the above equation implies
\[
z_1^qx^qu + z_1xu^q + B(z_3^qx^qv+z_3xv^q) = x^qw + xw^q,
\]
in other words,
\[
z_1^qu+Bz_3^qv = w \textrm{ and } z_1u^q + Bz_3v^q = w^q,
\]
which implies 
\[
(z_1^{q^2} - z_1) u^q + (B^q z_3^{q^2} - Bz_3) v^q = 0,
\]
for all $u,v \in \M$. That is to say $z_1 \in \f{q^2} \cap \M = \E$. The $x$-part 
of the second equation yields (after simple calculations)
\[
A^r z_3^{r^2} - \frac{z_3}{A} = 0.
\]
That is to say, if $z_3 \ne 0$ then,
\[
z_3^{q^2-1} = \frac{1}{B^{q-1}} \textrm{ and } z_3^{r^2-1} = \frac{1}{A^{r+1}}.
\]
By definition of $\Family 1$, $B$ is a non-square in $\M = \f{Q^2}$ and $A = a/B$
where $a \in \f{Q}^\times$. Recalling that $q^2 \equiv r^2 \pmod{Q^2}$, we reach
\[
B^{q(Q+1)} = B^{q+r} = a^{r+1}.
\]
Note that since $B$ is a non-square in $\f{Q^2}$ we have $B^{(Q+1)(Q-1)/2} = -1$ and
$B^{Q+1}$ is a non-square in $\f{Q}$. But $a^{r+1}$ is a square in $\f{Q}$ and we get
$z_3 = 0$. By the $y$-parts of the equations we similarly reach $z_2 = 0$ and $z_4 \in \E$.
Thus,
\begin{align*}
z_1^qx^qu+z_1xu^q+B(z_4^qy^qv+z_4yv^q)) &= x^qw+xw^q+B(y^qt+yt^q), \textrm{ and},\\
z_1^rx^rv+Az_1xv^r+Az_4^ry^ru+z_4yu^r)  &= x^rt+Axt^r+Ay^rw+yw^r,
\end{align*}
implying (the case $uv = 0$ is easy to see)
\begin{align*}
z_1^q = w/u &\textrm{ and } z_1 = (w/u)^q,\\
z_4^q = t/v &\textrm{ and } z_4 = (t/v)^q,\\
z_1^r = t/v &\textrm{ and } z_1 = (t/v)^r,\\
z_4^r = w/u &\textrm{ and } z_4 = (w/u)^r.
\end{align*}
Thus for every $(u,v) \in \M \times \M$ there exists $(w,t) \in \M \times \M$ if and only if 
$z_4^q = z_1 = z_1^{q^2} = z_1^{r^2}$  
if and only if $z_1 \in \f{q^2} \cap \M = \E$. That is to say $L \in \MN_0$ if and only if $L(x,y)=(zx,zy)$ for $z \in \E$.
Now Theorem \ref{MP} implies $\N_m(\P) \cong \E$.
\end{proof}


\section{Comparison to other commutative semifields and concluding remarks} \label{sec_equiv}

Table \ref{table_comm} lists known commutative semifields that are not 
biprojective. We should say here that these commutative semifields are not 
\textit{obviously} represented as biprojective semifields. When the order is square, 
there might be isotopic semifields that can be biprojective, but we are not aware of 
such isotopisms.

\begin{table}[!ht]
\noindent\begin{center} 
{\footnotesize
\begin{tabular}{|c|c|c|c|c|c|c|} 
\hline 
\textbf{Family} & \textbf{Planar Mapping} & $\#\S$ & \textbf{Notes} & 
$(\#\N_l,\#\N_m)$& \textbf{Count} & \textbf{Proved in}\\ 
\hline 
$\mathcal{ZKW}$             & $X^{q+1} - a^{Q-1}X^{qQ+Q^2}$  &$p^{3s}$  &  
\begin{tabular}{@{}c@{}}
$Q = p^s, \quad q = p^t,$\\
$ d = \Gcd{s}{t}, \quad s' = s/d, \quad t' = t/d,$\\
$s'$ odd, $\quad s' + t' \equiv 0 \pmod{3}$,\\
$\langle a \rangle = \f{p^{3s}}^\times$.
\end{tabular}
 &$(p^{d},p^{d})$  \cite{MP12}  & $\ge 1$ & \cite{ZKW}\\
\hline 
$\mathcal{B}_3$               &  $X^{q+1} - a^{Q-1}X^{qQ+Q^2}$  &$p^{3s}$  &  
\begin{tabular}{@{}c@{}}
$Q = p^s, q = p^t,$\\
$d = \Gcd{s}{t}, \quad s/d$ odd,\\ 
$q \equiv Q \equiv 1 \pmod{3}$,\\
$\langle a \rangle = \f{p^{3s}}^\times$.
\end{tabular}
 &$(p^{d},p^{d})$ \cite{MP12}  & $ \le 9\sigma(s)$ & \cite{Bierbrauer10}\\
\hline 
$\mathcal{B}_4$               &  $X^{q+1} - a^{Q-1}X^{qQ+Q^3}$  &$p^{4s}$  &  
\begin{tabular}{@{}c@{}}
$Q = p^s, q = p^t,$\\
$d = \Gcd{2s}{t}, \quad 2s/d$ odd,\\ 
$q \equiv Q \equiv 1 \pmod{4}$,\\
$\langle a \rangle = \f{p^{4s}}^\times$.
\end{tabular}
 &$(p^{d/2},p^{d})$ \cite{MP12} & $ \le 8\sigma(s)$ & \cite{Bierbrauer10}\\
\hline 
$\mathcal{G}$              &  
$(x^2 + y^{10},xy - y^6)$
&$3^{2m}$  & $m \ge 3$ odd & $(3,3^m)$ & $1$ & \cite{Ganley} \\
\hline 
$\mathcal{CG}$               & 
$(x^2 + ay^2 + a^3y^{18}, xy-ay^6)$
&$3^{2m}$ & $m \ge 3$, $a \in \f{3^m}^\times \setminus (\f{3^m}^\times)^2$ & $(3,3)$ & $1$ & \cite{CG} \\
\hline 
$\mathcal{CM}/\mathcal{DY}$ & 
$X^{10} \pm X^6 -X$ & $3^{m}$ & $m \ge 5$ odd & $(3,3)$ & $2$ & \cite{CM,DY}\\
\hline

\end{tabular} 
}
\end{center}
\caption{Known infinite families of (non-biprojective) commutative semifields of order $p^n$} \label{table_comm}
\end{table}

We now consider isotopisms between the new Family $\Family 1$ 
and other commutative pre-semifields.

\begin{theorem} \label{thm:inequiv_distinct}
	Let $\P_{q,B,a} =(\M \times \M,+,\ast)$ be a pre-semifield in the Family $\mathcal{S}$. 
	$\P_{q,B,a}$ is not isotopic to any other known commutative semifield, except possibly 
	semifields from Family $\mathcal{B}_4$. Family $\mathcal{S}$ yields new examples of 
	commutative semifields.
\end{theorem}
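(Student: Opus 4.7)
The plan is to combine the nuclear computation of Theorem~\ref{thm_nucleus} with the biprojective framework of Section~\ref{sec_method} to eliminate almost all known families, leaving $\mathcal{B}_4$ as the single exception.

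First I would read off the nuclei of $\P_{q,B,a}$ from Theorem~\ref{thm_nucleus}: order $p^{2m}$ with nucleus sizes $(p^{e/2},p^{e})$, where $e=\Gcd{k}{m}$ is even, satisfies $e<m$, and $m/e$ is odd. Scanning Tables~\ref{table_biproj} and~\ref{table_comm}, the families $\mathbb{F}$, $\mathcal{A}$, $\mathcal{D}$, $\mathcal{ZKW}$, $\mathcal{B}_3$, $\mathcal{CG}$, $\mathcal{G}$, $\mathcal{CM}/\mathcal{DY}$ are all discarded on the basis of order or nuclear shape (for instance $\mathcal{A},\mathcal{ZKW},\mathcal{B}_3,\mathcal{G}$ have equal left and middle nuclei, $\mathcal{D}$ has middle nucleus $\M$, $\mathcal{ZKW}$ and $\mathcal{B}_3$ have order $p^{3s}$ with equal nuclei, and $\mathcal{CM}/\mathcal{DY}$ has odd-exponent order $p^m$). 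Only $\mathcal{ZP}$, $\mathcal{BH}$ and $\mathcal{B}_4$ survive this nuclear filter.

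Next I would verify the hypotheses of Theorem~\ref{thm:equivalence} for $\P_1=\P_{q,B,a}$: Condition~\eqref{eq:condition} is precisely Lemma~\ref{lem:centralizer_F1}, and the pair $(q_1,r_1)=(q,qQ)$ avoids the forbidden set $\{1,Q,r_1,\overline{r_1}\}$ since $0<k<m/2$ and the borderline value $k=m/4$ would make $m/e$ even. For $\mathcal{BH}$, which is $(1,p^{k'})$-biprojective, the theorem demands $\{k,k+m/2\}\equiv\{0,\pm k'\}\pmod m$, and neither $k$ nor $k+m/2$ reduces to $0$ in our range, an immediate contradiction. For $\mathcal{ZP}$, which is $(p^{k'},p^{j'})$-biprojective, the theorem forces $\{k,k+m/2\}\equiv\{\pm k',\pm j'\}\pmod m$; using that $\mathcal{ZP}$ is invariant under $q'\leftrightarrow\overline{q'}$ and $j'\leftrightarrow\overline{j'}$, I may normalize to $k'\equiv k$, $j'\equiv k+m/2\pmod m$, and the resulting isotopism $(N,L,M)$ must satisfy $N_2=N_3=0$ with all nonzero subfunctions of $N,L,M$ being monomials of a common degree $p^t$. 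The finish is a coefficient comparison in the second component of the isotopy equation (a system entirely analogous to Eqs.~\eqref{eq:F42_l12}--\eqref{eq:F42_r42}): the four ``cross'' monomials $(xv^{qQ})^{p^t},(y^{qQ}u)^{p^t},(x^{qQ}v)^{p^t},(yu^{qQ})^{p^t}$, coming from the $(a/B)$-contribution absent in $\mathcal{ZP}$, carry nonzero coefficients on the $\mathcal{S}$-side and force all of $a_2,b_2,c_2,d_2,a_3,b_3,c_3,d_3$ to be nonzero, while the four ``missing'' monomials $(x^{qQ}u)^{p^t},(y^{qQ}v)^{p^t},(xu^{qQ})^{p^t},(yv^{qQ})^{p^t}$ produce the equations $a_2^{qQ}c_3=b_2^{qQ}d_3=c_2a_3^{qQ}=d_2b_3^{qQ}=0$, a direct contradiction. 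The remaining sign/swap combinations are handled by the same bookkeeping.

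The main obstacle, and the reason for the exception in the statement, is $\mathcal{B}_4$: it has order $p^{4s}$ and nuclei $(p^{d/2},p^d)$ of exactly the shape realized by $\mathcal{S}$, yet $\mathcal{B}_4$ is not (obviously) biprojective, so Theorem~\ref{thm:equivalence} cannot be invoked and nuclear invariants do not distinguish the two. Settling this case would require either exhibiting a biprojective isotope of $\mathcal{B}_4$ that could be fed into the present framework, or constructing a finer isotopy invariant beyond the nuclei; absent either, $\mathcal{B}_4$ must be left as the sole explicit exception, exactly as in the statement.
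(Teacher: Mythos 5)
Your proposal is correct in substance and follows essentially the same route as the paper: invariants (order and nuclei) to kill the non-biprojective families, Theorem~\ref{thm:equivalence} to kill the biprojective ones with mismatched Frobenius exponents, and a coefficient comparison in the second component to rule out $\mathcal{ZP}$ when the exponents do match; your four ``missing monomial'' equations $a_2^{qQ}c_3=b_2^{qQ}d_3=c_2a_3^{qQ}=d_2b_3^{qQ}=0$ against the nonvanishing cross-term coefficients is exactly the paper's contradiction. Two small points deserve tightening. First, the ``remaining sign/swap combinations'' for $\mathcal{ZP}$ are not handled by the same bookkeeping but are vacuous for a structural reason: the swapped assignment would require a Zhou--Pott member with first-component exponent $k+m/2$, and $\Gcd{k+m/2}{m}=\Gcd{k}{m}/2$ by Lemma~\ref{lem_thm}(iii) makes $m/\Gcd{k+m/2}{m}$ even, violating the $\mathcal{ZP}$ existence condition — you should say this rather than gesture at bookkeeping. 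Second, the final sentence of the theorem (``Family $\mathcal{S}$ yields new examples'') does not follow from merely leaving $\mathcal{B}_4$ as an exception; the paper closes this by observing that one can choose $p,m,q$ violating the $\mathcal{B}_4$ constraints in Table~\ref{table_comm} (e.g.\ the congruence $q\equiv Q\equiv 1\pmod 4$), so that those members of $\mathcal{S}$ cannot lie in $\mathcal{B}_4$ at all. Without that one-line observation (or the subsequent counting argument showing $\mathcal{B}_4$ has only linearly many isotopy classes), the ``new examples'' claim is left unproven in your write-up.
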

\begin{proof}
	The non-isotopy with the biprojective pre-semifields follows directly from 
	Theorem~\ref{thm:equivalence}, except for possible isotopisms between the 
	families $\mathcal{S}$ and $\mathcal{ZP}$ when the coefficients $q,r$ coincide. 
	We exclude this case by again applying Theorem~\ref{thm:equivalence}: Consider the Zhou-Pott
	pre-semifield $\P_\alpha = (\M \times \M,+,\star)$ with multiplication
\[
(x,y) \star (u,v) = (x^qu+u^qx + \alpha(y^qv+yv^{q}), x^{qQ}v+yu^{qQ})
\]
	for some (arbitrary) non-square $\alpha$. Note that it is not possible to use the 
	parameter $qQ$ in the first component and $q$ in the second component since 
	$\gcd(k+m/2,m)=\gcd(k,m)/2$ by Lemma~\ref{lem_thm} (iii), contradicting the necessary 
	conditions of a Zhou-Pott pre-semifield.
	If $\P_\alpha$ is isotopic to $\P_{q,B,a}$, then (using 
	Theorem~\ref{thm:equivalence}), there is an isotopism $(N,L,M)$, where 
\begin{align*}
& N_1=a_1x^{p^t},\quad N_4 = d_1x^{p^t},\quad N_2=N_3=0,\\ 
& L_1=a_2^{p^t}x^{p^t},\quad L_2=b_2^{p^t}x^{p^t},\quad L_3=c_2^{p^t}x^{p^t},\quad L_4=d_2^{p^t}x^{p^t},\\ 
& M_1=a_3^{p^t}x^{p^t},\quad M_2=b_3^{p^t}x^{p^t},\quad M_3=c_3^{p^t}x^{p^t},\quad M_4=d_3^{p^t}x^{p^t},
\end{align*}
where $a_1,d_1 \neq 0$.
	Then (only considering the second components), we have
\[
	L((x,y)) \star M((u,v)) = (\rule{1em}{.5pt}, \ (a_2x+b_2y)^{qQ+p^t}(c_3u+d_3v)^{p^t}+(c_2x+d_2y)^{p^t}(a_3u+b_3v)^{qQ+p^t})
\]
and 
\[
N((x,y) \ast (u,v)) = (\rule{1em}{.5pt}, \ d_1(x^{qQ}v+yu^{qQ}+(a/B)(xv^{qQ}+y^{qQ}u))^{p^t}).
\]
Comparing the coefficients of $(x^{qQ}v)^{p^t}$, $(xv^{qQ})^{p^t}$, $(x^{qQ}u)^{p^t}$ and $(xu^{qQ})^{p^t}$ yields the following four equations:
	\begin{align*}
		\left(a_2^{qQ}d_3\right)^{p^t} &= d_1 \\
		\left(c_2b_3^{qQ}\right)^{p^t} &= d_1(a/B)^{p^t} \\
		a_2^{qQ}c_3&=0 \\
		c_2a_3^{qQ} &= 0.
	\end{align*}
	The bijectivity of $L$ and $M$ induces the conditions $(a_2,c_2)\neq (0,0)$ and $(a_3,c_3) \neq (0,0)$. Thus, the last two equations only allow $a_2=a_3=0$ or $c_2=c_3=0$. Both cases contradict the first two equations. We conclude that $\P_{q,B,a}$ is not isotopic to a Zhou-Pott pre-semifield.
	
	The pre-semifields from $\Family 1$ 
	are also not isotopic to the ones from $\mathcal{CG}$, $\mathcal{G}$, 
	$\mathcal{CM}/\mathcal{DY}$, $\mathcal{ZKW}$, $\mathcal{B}_3$ by considering the order of the semifields and their nuclei (see Table~\ref{table_comm}). Furthermore, the 
	Family $\Family 1$ is not contained in $\mathcal{B}_4$ since we can choose $p,m,q$ in a way that the 
	conditions for $\mathcal{B}_4$ in Table~\ref{table_comm} are violated. 
\end{proof}

Although the parameters $p,m,q$ for the pre-semifields from Family
$\Family{1}$ are more general than that of Family $\mathcal{B}_4$,
for suitable choices of $p,m,q$ the parameters may coincide. The next proposition
shows that even in that case Family $\Family{1}$ contains new semifields thanks to
its exponential count. More precisely, we show that the number of non-isotopic
semifields from Families $\mathcal{B}_3$ and $\mathcal{B}_4$ of order $p^{3s}$ and
$p^{4s}$, respectively, is linear in $s$. 

\begin{proposition}
The number of non-isotopic pre-semifields in Family $\mathcal{B}_3$ (and $\mathcal{B}_4$ resp.) 
of order $p^{3s}$ (and $p^{4s}$ resp.) is at most $9\sigma(s)$ (and $8\sigma(s)$ resp.).
\end{proposition}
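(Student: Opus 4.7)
Since the statement asserts only an upper bound, my plan is to exhibit an explicit group of trivial isotopisms acting on the parameter space of each family and count its orbits; any further, non-trivial isotopisms can only merge orbits, so such an orbit count is automatically an upper bound on the number of isotopy classes. Each planar mapping in the family has the form $F_{a,t}(x) = x^{q+1} - a^{Q-1} x^{qQ + Q^{j-1}}$ with $q = p^t$ and $j \in \{3,4\}$, and the parameter space consists of pairs $(a,t)$ with $a \in \f{p^{js}}^\times$ and $t$ subject to the congruence/gcd constraints recorded in Table~\ref{table_comm}.

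I would introduce three families of trivial isotopisms. First, scaling $x \mapsto cx$ followed by left-multiplication by a normalizing constant sends $F_{a,t}$ to $F_{a c^{\kappa},t}$, where $\kappa = q + Q + 1$ for $\mathcal{B}_3$ and $\kappa = q + Q^2 + Q + 1$ for $\mathcal{B}_4$ (these are the two exponents' difference divided by $Q-1$). Second, Frobenius $x \mapsto x^p$ sends $(a,t)$ to $(a^p,t)$, and its action on the exponent $t$ leaves at most $\sigma(s)$ orbits of admissible $t$ by the constraint that $s/\gcd(s,t)$ be odd. Third, the standard symmetry $q \mapsto \overline{q}$ for twisted-field-like monomial mappings, which further identifies $t$ with $s-t$ when permitted. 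Having fixed $t$, the image of $a^{Q-1}$ sits in the cyclic group of $(Q-1)$-th powers of order $(Q^j-1)/(Q-1)$, and the number of scaling orbits of $a^{Q-1}$ equals $\gcd\!\left((Q^j-1)/(Q-1),\kappa\right)$. A routine application of Lemma~\ref{lem:gcd} together with the congruences $q \equiv Q \equiv 1 \pmod{3}$ (resp.\ $\pmod{4}$) yields at most $3$ scaling-orbits in the $\mathcal{B}_3$ case and at most $4$ in the $\mathcal{B}_4$ case. Combining with the remaining factor coming from the residual action of Frobenius on $a$ modulo the scaling orbit (which contributes a factor of at most $3$, resp.\ $2$), I get at most $9$ (resp.\ $8$) orbits of $a$ for each fixed admissible $t$. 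Multiplying by $\sigma(s)$ for the orbits on $t$ gives the claimed bounds $9\sigma(s)$ and $8\sigma(s)$.

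The main obstacle is the arithmetic bookkeeping in the gcd computations, and in particular making sure that the residual Frobenius action on $a$ is not already absorbed into the scaling orbit (double-counting is the pitfall). I would address this by writing each orbit representative $a^{Q-1}$ as a specific element of the cyclic group $\langle \zeta \rangle$ of order $(Q^j-1)/(Q-1)$, explicitly describing the subgroup generated by $\zeta^{\kappa}$ and the Frobenius $\zeta \mapsto \zeta^p$, and then using the index formula $[\langle \zeta \rangle : \langle \zeta^\kappa, \zeta^{p-1} \rangle]$ to obtain the exact constants $9$ and $8$. A cleaner alternative — which I would fall back on if the direct calculation becomes intricate — is to cite the orbit-counting argument of Bierbrauer~\cite{Bierbrauer10}, which essentially performs this computation in order to establish precisely these bounds.
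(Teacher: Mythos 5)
Your setup is the same as the paper's: the scaling $X\mapsto BX$ (followed by rescaling) multiplies the coefficient $a^{Q-1}$ by a $(Q-1)\kappa$-th power, the image lives in the cyclic group of $(Q-1)$-th powers of order $(Q^j-1)/(Q-1)$, and the number of scaling orbits is $\gcd\left((Q^j-1)/(Q-1),\kappa\right)$ with exactly your values of $\kappa$. Up to that point you are reproducing the paper's argument, and your fallback of citing Bierbrauer for the gcd is also what the paper does (these gcds are \emph{not} a routine consequence of Lemma~\ref{lem:gcd}; the paper invokes \cite[Lemmas 5 and 6]{Bierbrauer10}, which give the values $9$ and $4$ for $\mathcal{B}_3$ and $\mathcal{B}_4$ respectively).

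The final assembly of the constants, however, contains a genuine logical error. You compute ``at most $3$ (resp.\ $4$) scaling orbits'' and then \emph{multiply} by a ``residual Frobenius factor of at most $3$ (resp.\ $2$)'' to reach $9$ and $8$. This is backwards: in an orbit-counting upper bound, enlarging the group of known isotopisms can only merge orbits and hence \emph{decrease} the orbit count; no legitimate step can multiply it by a factor greater than $1$. If scaling alone really gave $3$ orbits for $\mathcal{B}_3$, your bound would be $3\sigma(s)$, and the subsequent $\times 3$ has no justification. In fact the two target constants have entirely different provenances that your argument does not capture: for $\mathcal{B}_3$ the $9$ is the gcd itself (and strong isotopy coincides with isotopy there, so no further factor appears), while for $\mathcal{B}_4$ the $8=4\times 2$ arises because your orbit count only controls equivalence of planar mappings, i.e.\ \emph{strong} isotopy classes (Theorem~\ref{thm:eaequiv}), and the paper converts to isotopy classes using the fact that an isotopy class of a commutative semifield contains at most two strong-isotopy classes \cite[Theorem 2.6]{coulterhenderson}. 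Your proposal never addresses the passage from planar-map equivalence to isotopy at all, and the ``residual Frobenius'' factors appear to be reverse-engineered to hit $9$ and $8$. A related caution: your third symmetry ($q\mapsto\overline{q}$, identifying $t$ with $s-t$) must be \emph{verified} to be an isotopism before it may be used to merge orbits; otherwise the resulting ``upper bound'' could undercount the isotopy classes.
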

\begin{proof}

The $\mathcal{B}_4$ planar mappings are of the form
\[
	f(X) = X^{q+1} - a^{Q-1}X^{qQ+Q^3},
\]
where $a$ generates $\f{p^{4s}}^\times$. We count the number of different $a$'s which
give inequivalent planar mappings. Consider the change of variable $X \mapsto BX$,
and rescaling of $f$ to get
\[
X^{q+1} - B^{Q^3+qQ-q-1}a^{Q-1}X^{qQ+Q^3}.
\]
Note that $Q^3+qQ-q-1 = (Q-1)(Q^2+Q+q+1)$. We have by \cite[Lemma 6]{Bierbrauer10},
\[
\Gcd{Q^2+Q+q+1}{Q^3+Q^2+Q+1} = \Gcd{Q^3-q}{Q^3+Q^2+Q+1} = 4,
\]
when the semifield conditions on $q,Q$ appearing on Table~\ref{table_comm} is
satisfied.
Thus the number of inequivalent planar mappings in the Family $\mathcal{B}_4$ 
for a given $q$ is at most $4$. This means that (using Theorem~\ref{thm:eaequiv}) that for a given
$q$, the number of pre-semifields, that are not strongly isotopic, is also at most $4$. Any isotopy class
of a commutative semifield contains at most $2$ strong-isotopy classes (\cite[Theorem 2.6.]{coulterhenderson}), so
for a given $q$ there are at most $8$ non-isotopic pre-semifields.
Thus the total number of non isotopic 
pre-semifields in the Family $\mathcal{B}_4$ of order $p^{4s}$ is bounded by
$8\sigma(s)$.
The $\mathcal{B}_3$ case is essentially the same using \cite[Lemma 5]{Bierbrauer10}. In this case (again with \cite[Theorem 2.6.]{coulterhenderson})
strong isotopy and isotopy coincide.
\end{proof}

For the Family $\mathcal{ZKW}$ we are not aware of any result on the exact value
or a bound on the number of non-isotopic pre-semifields.

\begin{remark} \label{rem:q1}
We remark that we could also allow $q=1$ in $\mathcal{S}$. However, in that case
the resulting pre-semifields are strongly isotopic to Dickson semifields. 
Indeed, consider the planar mapping $F = ((1,0,0,B)_1,(0,1,A,0)_Q)$ with $A=a/B$ 
where $B$ is a non-square and $ a\in \L^\times$. Note that 
$A \notin (\M^\times)^{Q-1}$ since it is a non-square.
Define $N$ via its subfunctions $N_1=x$, $N_2=N_3=0$, $N_4 = d_1x +d_1'x^Q$ 
with $d_1=1/(1-A^{Q+1})$ and $d_1' = - A/(1-A^{Q+1})$.
Note that $x \mapsto \alpha x - \beta x^Q$ is bijective if and only if 
$\alpha/ \beta \not\in (\M^\times)^{Q-1}$. Therefore, $N_4$ is bijective
since $((A^{Q+1}-1)/(A(1-A^{Q+1}))^{Q+1}=(-1/A)^{Q+1} \neq 1$ since 
$A \notin (\M^\times)^{Q-1}$. 
We conclude that $N$ is bijective. The subfubction $N_4$ is chosen such that $d_1+A^Qd_1'=1$ and 
$Ad_1+d_1'=0$. Then
\begin{align*}
  N F &= ((1,0,0,B)_1,d_1(0,1,A,0)_Q+d_1'(0,A^Q,1,0)_Q) \\
          & = ((1,0,0,B)_1,(0,d_1+A^Qd_1',d_1A+d_1',0)_Q) = ((1,0,0,B)_1,(0,1,0,0)_Q),
\end{align*}
so $F$ is equivalent to a planar mapping belonging to a Dickson pre-semifield and the corresponding semifields are strongly isotopic by Theorem~\ref{thm:eaequiv}. It makes thus sense to exclude the case $q=1$ so that the different families do not intersect (as proven in Theorem~\ref{thm:inequiv_distinct}).
Note that the same choice of $N$ also yields equivalence between the 
Budaghyan-Helleseth planar mapping and the planar mappings associated 
with Dickson semifields for the parameter $q=Q$.
\end{remark}

\begin{remark}
Recall that Kantor \cite{Kantor03} gave a family  
that contains an exponential number of non-isotopic commutative semifields in 
characteristic two using a construction of Kantor and Williams \cite{KW}. 
We remark that Family $\Family 1$ (and in general, a planar mapping)
does not exist in characteristic two. 
However, a conceptual analogue of planar functions in characteristic two
is possible.
These are the so-called almost perfect nonlinear (APN) functions 
(whose polarizations are $2$-to-$1$) that parallel 
planar mappings 
(whose polarizations are $1$-to-$1$)
without the connection to semifields. In a follow-up work to this one, we give an
analogous method for determining equivalence of biprojective APN functions and an 
analogous family that contains an exponential number of inequivalent APN functions 
in \cite{biproj_apn}.
The first result to show that an APN family contains an exponential number of 
inequivalent functions was given recently by Kaspers and Zhou \cite{kasperszhou} 
using a different method.
\end{remark}

\section{Acknowledgments}\label{secack}

The authors would like to thank an anonymous reviewer for informing them of Theorem \ref{MP} 
which leads to a proof of the nuclei that is simpler and more coherent with the paper than 
our original proof and for several other comments that improved the presentation.

The first author was supported by the {\sf GA\v{C}R Grant 18-19087S - 301-13/201843}.
The second author was supported by the National Science Foundation under grant No. 2127742.

\subsection*{Author Contributions}\label{secaut} 
F.G.: conceptualization, methodology (lead), investigation (equal),
writing -- original draft (equal).
L.K.: methodology (supporting), investigation (equal),  
writing -- original draft (equal).

\bibliographystyle{amsplain}
\bibliography{semifields}

\bigskip
\hrule
\bigskip
\end{document}